\documentclass[11pt,a4paper]{amsart}
\usepackage{graphicx}
\usepackage{amsmath,amsfonts}
\usepackage{amsthm,amssymb,latexsym}
\usepackage[active]{srcltx}
\usepackage[usenames]{color}
%\usepackage{soul}

%%%%%%%%%%%%%%%%%%%%%%%%%%%%%%%%%%%%%%%%%%%%%%%%%%%%%%%%%%%%%%%%%%
\vfuzz2pt % Don't report over-full v-boxes if over-edge is small
\hfuzz2pt % Don't report over-full h-boxes if over-edge is small
% THEOREMS %%%%%%%%%%%%%%%%%%%%%%%%%%%%%%%%%%%%%%%%%%%%%%%%%%%%%%%
\newtheorem{theorem}{Theorem}[section]
\newtheorem{corollary}[theorem]{Corollary}
\newtheorem{lemma}[theorem]{Lemma}

\newtheorem*{fact*}{Fact}
\newtheorem{proposition}[theorem]{Proposition}
\theoremstyle{definition}

\newtheorem{remark}[theorem]{Remark}
\numberwithin{equation}{section}

% MATH %%%%%%%%%%%%%%%%%%%%%%%%%%%%%%%%%%%%%%%%%%%%%%%%%%%%%%%%%%%
\newcommand{\N}{\mathbb N}

\newcommand{\A}{\mathbb A}
\newcommand{\F}{\mathbb F}
\newcommand{\K}{\mathbb K}

\newcommand{\Pp}{\mathbb P}

\newcommand{\fq}{\F_{\hskip-0.7mm q}}

\newcommand{\cfq}{\overline{\F}_{\hskip-0.7mm q}}
\textheight 24cm
\addtolength{\oddsidemargin}{-.475in}
    \addtolength{\evensidemargin}{-.475in}
    \addtolength{\textwidth}{0.95in}

    \addtolength{\topmargin}{-.875in}
% recognize mathmode and switch macro context accordingly
\def\ifm#1#2{\relax \ifmmode#1\else#2\fi}

% short abbreviations:

%  comma \ldots comma

%  plus \cdots plus

% first letter + index from 1 to n or 1 to [parameter]

%
%%%%%%%%%%%%%%%%%%%%%%%%%%%%%%%%%%%%%%%%%%%%%%%%%%%%%%%%%%%%%%%%%%

\begin{document}

\title[Value set of small families III]{On the
value set of small families of polynomials over a finite field, III}
\author[G. Matera et al.]{
Guillermo Matera${}^{1,2}$,
%
%\author[M. P\'erez]{
Mariana P\'erez${}^1$,
%
%\author[M. Privitelli]{
and Melina Privitelli${}^3$}

\address{${}^{1}$Instituto del Desarrollo Humano,
Universidad Nacional de Gene\-ral Sarmiento, J.M. Guti\'errez 1150
(B1613GSX) Los Polvorines, Buenos Aires, Argentina}
\email{\{gmatera,\,vperez\}@ungs.edu.ar}
\address{${}^{2}$ National Council of Science and Technology (CONICET),
Ar\-gentina}
\address{${}^{3}$Instituto de Ciencias,
Universidad Nacional de Gene\-ral Sarmiento, J.M. Guti\'errez 1150
(B1613GSX) Los Polvorines, Buenos Aires, Argentina}
\email{mprivite@ungs.edu.ar}
\thanks{The authors were partially supported by the grants
PIP CONICET 11220130100598, PIO CONICET-UNGS 14420140100027 and UNGS 30/3084.}%

\date{\today}
\begin{abstract}
We estimate the average cardinality $\mathcal{V}(\mathcal{A})$ of
the value set of a general family $\mathcal{A}$ of monic univariate
polynomials of degree $d$ with coefficients in the finite field
$\fq$. We establish conditions on the family $\mathcal{A}$ under which 
$\mathcal{V}(\mathcal{A})=\mu_d\,q+\mathcal{O}(q^{1/2})$, where
$\mu_d:=\sum_{r=1}^d{(-1)^{r-1}}/{r!}$. The result holds without any restriction
on the characteristic of $\fq$ and provides an explicit expression for the
constant underlying the $\mathcal{O}$--notation in terms of $d$. We
reduce the question to estimating the number of $\fq$--rational
points with pairwise--distinct coordinates of a certain family of
complete intersections defined over $\fq$. For this purpose, we
obtain an upper bound on the dimension of the singular locus of the
complete intersections under consideration, which allows us to
estimate the corresponding number of $\fq$--rational points. %Also, we discuss the applications to linear a nonlinear families of polynomials.
\end{abstract}
\maketitle

%
%------------------------------------------------------------------
%------------------------------------------------------------------
%------------------------------------------------------------------
%------------------------------------------------------------------
%------------------------------------------------------------------
%------------------------------------------------------------------
%------------------------------------------------------------------
%------------------------------------------------------------------
%
\section{Introduction}
Let $\fq$ be the finite field of $q:=p^s$ elements, where $p$ is a
prime number, let $\cfq$ denote its algebraic closure, and let $T$
be an indeterminate over $\cfq$. For $f\in\fq[T]$, its value set is
the image of the mapping from $\fq$ to $\fq$ defined by $f$ (cf.
\cite{LiNi83}). We shall denote its cardinality by $\mathcal{V}(f)$,
namely $\mathcal{V}(f):=|\{f(c):c\in\fq\}|$.

In a seminal paper, Birch and Swinnerton--Dyer \cite{BiSD59} showed
that, for fixed $d\ge 1$, if $f\in\fq[T]$ is a ``general''
polynomial of degree $d$, then
\begin{equation}\label{eq: intro: BirchSD}
\mathcal{V}(f)=\mu_d\,q+\mathcal{O}(q^{\frac{1}{2}}),
\end{equation}
where $\mu_d:=\sum_{r=1}^d{(-1)^{r-1}}/{r!}$ and the
$\mathcal{O}$--constant depends only on $d$.

Uchiyama \cite{Uchiyama55a} and Cohen (\cite{Cohen73},
\cite{Cohen72}) were concerned on estimates for the average
cardinality of the value set when $f$ ranges over all monic
polynomials of degree $d$ in $\fq[T]$. In particular, in
\cite{Cohen72} the problem of estimating the average cardinality of
the value set on linear families of monic polynomials of $\fq[T]$ of
degree $d$ is addressed. More precisely, it is shown that, for a
linear family $\mathcal{A}$ of codimension $m \le d-2$ satisfying
certain conditions,
\begin{equation}\label{eq: intro: Cohen72}
\mathcal{V}(\mathcal{A})=\mu_d\,q+ \mathcal{O}(q^{\frac{1}{2}}),
\end{equation}
where $\mathcal{V}(\mathcal{A})$ denotes the average cardinality of
the value set of the elements in $\mathcal{A}$. As a particular case
we have the classical case of polynomials with prescribed
coefficients, where simpler conditions are obtained.

A difficulty with \eqref{eq: intro: Cohen72} is that the hypotheses
on the linear family $\mathcal{A}$ seem complicated and not easy to
verify. A second concern is that \eqref{eq: intro: Cohen72} imposes
the restriction $p>d$, which inhibits its application to fields of
small characteristic. For these reasons, in \cite{CeMaPePr14} and
\cite{MaPePr14} we obtained explicit estimates for any family of
monic polynomials of $\fq[T]$ of degree $d$ with certain consecutive
coefficients prescribed, which are valid for $p>2$. In this paper we
develop a framework which allows us to significantly generalize
these results to rather general (eventually nonlinear) families of
monic polynomials of $\fq[T]$ of degree $d$.

More precisely, let $d$, $m$ be positive integers with $q>d\ge m+2$,
and let $A_{d-1},\ldots,A_0$ be indeterminates over $\cfq$. Let
$G_1,\ldots,G_m\in \fq [A_{d-1},\ldots,A_0]$ be polynomials of
degree $d_1,\ldots,d_m$ and
$\mathcal{A}:=\mathcal{A}(G_1,\ldots,G_m)$ the family
\begin{equation}\label{eq: definition A}
\mathcal{A}:=\Bigg\{T^d+\sum_{j=0}^{d-1}a_jT^j\in \fq[T]:
G_i(a_{d-1},\ldots,a_0)=0\ (1\le i\le m)\Bigg\}.
\end{equation}
Denote by $\mathcal{V}(\mathcal{A})$ the average value of
$\mathcal{V}(f)$ for $f$ ranging in $\mathcal{A}$, that is,
\begin{equation}\label{eq: value set in family}
\mathcal{V}(\mathcal{A}):= \frac{1}{|\mathcal{A}|}\sum_{f\in
\mathcal{A}}\mathcal{V}(f).
\end{equation}
Our main result establishes rather general conditions on
$G_1,\ldots, G_m$ under which the asymptotic behavior of
$\mathcal{V}(\mathcal{A})$ agrees with that of the general case, as
predicted in \eqref{eq: intro: BirchSD} and \eqref{eq: intro:
Cohen72}. More precisely, we prove that
$$
\left|\mathcal{V}(\mathcal{A})-\mu_d\,q\right|\le 2^d\delta(3D+d^2)
q^{\frac{1}{2}} + 67 \delta^2(D+2)^2\,{d^{d+5} e^{2 \sqrt{d}-d}},$$
where $\delta:=\prod_{i=1}^md_i$ and $D:=\sum_{i=1}^m(d_i-1)$.

Our approach relies on tools of algebraic geometry in the same vein
as \cite{CeMaPePr14} and \cite{MaPePr14}. In Section \ref{sec:
notions of algebraic geometry} we recall the basic notions and
notations of algebraic geometry we use. In Section \ref{sec:
combinatorial preliminaries} we provide a combinatorial expression
for $\mathcal{V}(\mathcal{A})$ in terms of the number
$\mathcal{S}_r^{\mathcal{A}}$ of certain ``interpolating sets'' with
$1\leq r\leq d$ and we relate each $\mathcal{S}_r^{\mathcal{A}}$
with the number of $\fq$--rational points of certain incidence
variety $\Gamma_r^*$ of $\cfq{\!}^{d+r}$. In Section \ref{section:
geometry of Gamma_r} we show that $\Gamma_r^*$ is an
$\fq$--definable normal complete intersection, and establish a
number of geometric properties of $\Gamma_r^*$. To estimate the
number of $\fq$--rational points of $\Gamma_r^*$ is necessary to
discuss the behavior of $\Gamma_r^*$ at ``infinity'', which is done
in Section \ref{section: geometry of pcl Gamma r}. Finally, the
results of Sections \ref{section: geometry of Gamma_r} and
\ref{section: geometry of pcl Gamma r} allow us to estimate, in
Section \ref{sec: number of points Gamma_j}, the number of
$\fq$--rational points of $\Gamma_r^*$, and therefore determine the
asymptotic behavior of $\mathcal{V}(\mathcal{A})$. Applications to
linear and nonlinear families of polynomials are briefly discussed.
%
%------------------------------------------------------------------
%------------------------------------------------------------------
%------------------------------------------------------------------
%------------------------------------------------------------------
%------------------------------------------------------------------
%------------------------------------------------------------------
%------------------------------------------------------------------
%------------------------------------------------------------------
%
\section{Basic notions of algebraic geometry}
\label{sec: notions of algebraic geometry}
In this section we collect the basic definitions and facts of
algebraic geometry that we need in the sequel. We use standard
notions and notations which can be found in, e.g., \cite{Kunz85},
\cite{Shafarevich94}.

Let $\K$ be any of the fields $\fq$ or $\cfq$. We denote by $\A^n$
the affine $n$--dimensional space $\cfq{\!}^{n}$ and by $\Pp^n$ the
projective $n$--dimensional space over $\cfq{\!}^{n+1}$. Both spaces
are endowed with their respective Zariski topologies over $\K$, for
which a closed set is the zero locus of a set of polynomials of
$\K[X_1,\ldots, X_{n}]$, or of a set of homogeneous polynomials of
$\K[X_0,\ldots, X_{n}]$.

A subset $V\subset \Pp^n$ is a {\em projective variety defined over}
$\K$ (or a projective $\K$--variety for short) if it is the set of
common zeros in $\Pp^n$ of homogeneous polynomials $F_1,\ldots, F_m
\in\K[X_0 ,\ldots, X_n]$. Correspondingly, an {\em affine variety of
$\A^n$ defined over} $\K$ (or an affine $\K$--variety) is the set of
common zeros in $\A^n$ of polynomials $F_1,\ldots, F_{m} \in
\K[X_1,\ldots, X_{n}]$. We think a projective or affine
$\K$--variety to be equipped with the induced Zariski topology. We
shall denote by $\{F_1=0,\ldots, F_m=0\}$ or $V(F_1,\ldots,F_m)$ the
affine or projective $\K$--variety consisting of the common zeros of
$F_1,\ldots, F_m$.

In the remaining part of this section, unless otherwise stated, all
results referring to varieties in general should be understood as
valid for both projective and affine varieties.

A $\K$--variety $V$ is {\em irreducible} if it cannot be expressed
as a finite union of proper $\K$--subvarieties of $V$. Further, $V$
is {\em absolutely irreducible} if it is $\cfq$--irreducible as a
$\cfq$--variety. Any $\K$--variety $V$ can be expressed as an
irredundant union $V=\mathcal{C}_1\cup \cdots\cup\mathcal{C}_s$ of
irreducible (absolutely irreducible) $\K$--varieties, unique up to
reordering, which are called the {\em irreducible} ({\em absolutely
irreducible}) $\K$--{\em components} of $V$.

For a $\K$--variety $V$ contained in $\Pp^n$ or $\A^n$, we denote by
$I(V)$ its {\em defining ideal}, namely the set of polynomials of
$\K[X_0,\ldots, X_n]$, or of $\K[X_1,\ldots, X_n]$, vanishing on
$V$. The {\em coordinate ring} $\K[V]$ of $V$ is defined as the
quotient ring $\K[X_0,\ldots,X_n]/I(V)$ or
$\K[X_1,\ldots,X_n]/I(V)$. The {\em dimension} $\dim V$ of a
$\K$-variety $V$ is the length $r$ of a longest chain
$V_0\varsubsetneq V_1 \varsubsetneq\cdots \varsubsetneq V_r$ of
nonempty irreducible $\K$--varieties contained in $V$. A
$\K$--variety $V$ is called {\em equidimensional} if all the
irreducible $\K$--components of $V$ are of the same dimension. In
such a case, we say that $V$ has {\em pure dimension} $r$, meaning
that every irreducible $\K$--component of $V$ has dimension $r$.

A $\K$--variety of $\Pp^n$ or $\A^n$ of pure dimension $n-1$ is
called a $\K$--hypersurface. It turns out that a $\K$--hypersurface
of $\Pp^n$ (or $\A^n$) is the set of zeros of a single nonzero
polynomial of $\K[X_0,\ldots, X_n]$ (or of $\K[X_1,\ldots, X_n]$).

The {\em degree} $\deg V$ of an irreducible $\K$--variety $V$ is the
maximum number of points lying in the intersection of $V$ with a
linear space $L$ of codimension $\dim V$, for which $V\cap L$ is a
finite set. More generally, following \cite{Heintz83} (see also
\cite{Fulton84}), if $V=\mathcal{C}_1\cup\cdots\cup \mathcal{C}_s$
is the decomposition of $V$ into irreducible $\K$--components, we
define the degree of $V$ as
$$\deg V:=\sum_{i=1}^s\deg \mathcal{C}_i.$$
The degree of a $\K$--hypersurface $V$ is the degree of a polynomial
of minimal degree defining $V$. Another property is that the degree
of a dense open subset of a $\K$--variety $V$ is equal to the degree
of $V$.

An important tool for our estimates is the following {\em B\'ezout
inequality} (see \cite{Heintz83}, \cite{Fulton84}, \cite{Vogel84}):
if $V$ and $W$ are $\K$--varieties of the same ambient space, then
the following inequality holds:
\begin{equation}\label{eq: Bezout}
\deg (V\cap W)\le \deg V \cdot \deg W.
\end{equation}

Let $V\subset\A^n$ be a $\K$--variety and $I(V)\subset
\K[X_1,\ldots, X_n]$ its defining ideal. Let $x$ be a point of $V$.
The {\em dimension} $\dim_xV$ {\em of} $V$ {\em at} $x$ is the
maximum of the dimensions of the irreducible $\K$--components of $V$
that contain $x$. If $I(V)=(F_1,\ldots, F_m)$, the {\em tangent
space} $\mathcal{T}_xV$ to $V$ at $x$ is the kernel of the Jacobian
matrix $(\partial F_i/\partial X_j)_{1\le i\le m,1\le j\le n}(x)$ of
$F_1,\ldots, F_m$ with respect to $X_1,\ldots, X_n$ at $x$. We have
the inequality $\dim\mathcal{T}_xV\ge \dim_xV$ (see, e.g.,
\cite[page 94]{Shafarevich94}). The point $x$ is {\em regular} if
$\dim\mathcal{T}_xV=\dim_xV$. Otherwise, the point $x$ is called
{\em singular}. The set of singular points of $V$ is the {\em
singular locus} $\mathrm{Sing}(V)$ of $V$; it is a closed
$\K$--subvariety of $V$. A variety is called {\em nonsingular} if
its singular locus is empty. For a projective variety, the concepts
of tangent space, regular and singular point can be defined by
considering an affine neighborhood of the point under consideration.

Let $V$ and $W$ be irreducible affine $\K$--varieties of the same
dimension and let $f:V\to W$ be a regular map for which
$\overline{f(V)}=W$ holds, where $\overline{f(V)}$ denotes the
closure of $f(V)$ with respect to the Zariski topology of $W$. Such
a map is called {\em dominant}. Then $f$ induces a ring extension
$\K[W]\hookrightarrow \K[V]$ by composition with $f$. We say that
the dominant map $f$ is {\em finite} if this extension is integral,
namely each element $\eta\in\K[V]$ satisfies a monic equation with
coefficients in $\K[W]$. A basic fact is that a dominant finite
morphism is necessarily closed. Another fact concerning dominant
finite morphisms we shall use is that the preimage $f^{-1}(S)$ of an
irreducible closed subset $S\subset W$ is of pure dimension $\dim S$
(see, e.g., \cite[\S 4.2, Proposition]{Danilov94}).
%
%----------------------------------------------------------------
%----------------------------------------------------------------
%----------------------------------------------------------------
%----------------------------------------------------------------
%
\subsection{Rational points}
Let $\Pp^n(\fq)$ be the $n$--dimensional projective space over $\fq$
and let $\A^n(\fq)$ be the $n$--dimensional $\fq$--vector space
$\fq^n$. For a projective variety $V\subset\Pp^n$ or an affine
variety $V\subset\A^n$, we denote by $V(\fq)$ the set of
$\fq$--rational points of $V$, namely $V(\fq):=V\cap \Pp^n(\fq)$ in
the projective case and $V(\fq):=V\cap \A^n(\fq)$ in the affine
case. For an affine variety $V$ of dimension $r$ and degree
$\delta$, we have (see, e.g., \cite[Lemma 2.1]{CaMa06})
\begin{equation}\label{eq: upper bound -- affine gral}
   |V(\fq)|\leq \delta q^r.
\end{equation}
On the other hand, if $V$ is a projective variety of dimension $r$
and degree $\delta$, we have (see \cite[Proposition 12.1]{GhLa02a}
or \cite[Proposition 3.1]{CaMa07}; see \cite{LaRo15} for more
precise upper bounds)
 \begin{equation}\label{eq: upper bound -- projective gral}
   |V(\fq)|\leq \delta\, p_r,
 \end{equation}
where $p_r:=q^r+q^{r-1}+\cdots+q+1=|\Pp^r(\fq)|$.
%
%----------------------------------------------------------------
%----------------------------------------------------------------
%----------------------------------------------------------------
%----------------------------------------------------------------
%
\subsection{Complete intersections}
Elements $F_1,\ldots, F_{n-r}$ in $\mathbb{K}[X_1,\ldots,X_n]$ or
$\mathbb{K}[X_0,\ldots,X_n]$ form a \emph{regular sequence} if $F_1$
is nonzero and no $F_i$ is zero or a zero divisor in the quotient
ring $\mathbb{K}[X_1,\ldots,X_n]/ (F_1,\ldots,F_{i-1})$ or
$\mathbb{K}[X_0,\ldots,X_n]/ (F_1,\ldots,F_{i-1})$ for $2\leq i \leq
n-r$. In that case, the (affine or projective) $\mathbb{K}$--variety
$V:=V(F_1,\ldots,F_{n-r})$ is called a {\em set--theoretic complete
intersection}. We remark that $V$ is necessarily of pure dimension
$r$. Furthermore, $V$ is called an {\em ideal--theoretic complete
intersection} if its ideal $I(V)$ over $\K$ can be generated by
$n-r$ polynomials.

If $V\subset\Pp^n$ is an ideal--theoretic complete intersection
defined over $\K$ of dimension $r$, and $F_1 ,\ldots, F_{n-r}$ is a
system of homogeneous generators of $I(V)$, the degrees $d_1,\ldots,
d_{n-r}$ depend only on $V$ and not on the system of generators.
Arranging the $d_i$ in such a way that $d_1\geq d_2 \geq \cdots \geq
d_{n-r}$, we call $(d_1,\ldots, d_{n-r})$ the {\em multidegree} of
$V$. In this case, a stronger version of %the B\'ezout inequality
\eqref{eq: Bezout} holds, called the {\em B\'ezout theorem} (see,
e.g., \cite[Theorem 18.3]{Harris92}):
$$\deg V=d_1\cdots d_{n-r}.$$

In what follows we shall deal with a particular class of complete
intersections, which we now define. A complete intersection $V$ is
called {\em normal} if it is {\em regular in codimension 1}, that
is, the singular locus $\mathrm{Sing}(V)$ of $V$ has codimension at
least $2$ in $V$, namely $\dim V-\dim \mathrm{Sing}(V)\ge 2$
(actually, normality is a general notion that agrees on complete
intersections with the one we define here). A fundamental result for
projective complete intersections is the Hartshorne connectedness
theorem (see, e.g., \cite[Theorem VI.4.2]{Kunz85}): if
$V\subset\Pp^n$ is a complete intersection defined over $\K$ and
$W\subset V$ is any $\K$--subvariety of codimension at least 2, then
$V\setminus W$ is connected in the Zariski topology of $\Pp^n$ over
$\K$. Applying the Hartshorne connectedness theorem with
$W:=\mathrm{Sing}(V)$, one deduces the following result.
\begin{theorem}\label{theorem: normal complete int implies irred}
If $V\subset\Pp^n$ is a normal complete intersection, then $V$ is
absolutely irreducible.
\end{theorem}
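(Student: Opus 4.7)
The strategy is the one already hinted at in the text: invoke the Hartshorne connectedness theorem with $W:=\mathrm{Sing}(V)$, and then convert a hypothetical nontrivial absolute decomposition of $V$ into a disconnection of $V\setminus W$. Since the target property is absolute irreducibility, I would phrase everything over $\cfq$: view $V$ as a complete intersection in $\Pp^n$ over $\cfq$ of some pure dimension $r$, and observe that normality gives exactly $\dim\mathrm{Sing}(V)\le r-2$, so $W:=\mathrm{Sing}(V)$ satisfies the codimension-at-least-two hypothesis of the theorem. The immediate conclusion is that $V\setminus\mathrm{Sing}(V)$ is connected in the Zariski topology over $\cfq$.

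From here I would argue by contradiction. Suppose $V$ decomposes as $V=\mathcal{C}_1\cup\cdots\cup\mathcal{C}_s$ into its absolutely irreducible components with $s\ge 2$; equidimensionality of a complete intersection forces $\dim\mathcal{C}_i=r$ for each $i$. Grouping the components, I would write $V=V_1\cup V_2$ with $V_1$ and $V_2$ nonempty closed $\cfq$-subvarieties of pure dimension $r$, neither contained in the other, and then aim to use the two pieces $V_i\setminus\mathrm{Sing}(V)$ to produce a disconnection of $V\setminus\mathrm{Sing}(V)$.

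Nonemptiness of each $V_i\setminus\mathrm{Sing}(V)$ is immediate, since $\dim V_i=r$ strictly exceeds $\dim\mathrm{Sing}(V)\le r-2$, so no $V_i$ can lie inside $\mathrm{Sing}(V)$. The crux of the argument --- and the only step that is not pure bookkeeping --- is the inclusion $V_1\cap V_2\subset\mathrm{Sing}(V)$, which is what forces the two pieces to be disjoint. I expect to prove this by a local argument at a point $x\in V_1\cap V_2$: because at least two distinct $r$-dimensional components of $V$ pass through $x$, the local ring of $V$ at $x$ admits at least two minimal primes and therefore is not an integral domain, let alone a regular local ring, so $x\in\mathrm{Sing}(V)$. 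Once this inclusion is available, the closed sets $V_1\setminus\mathrm{Sing}(V)$ and $V_2\setminus\mathrm{Sing}(V)$ provide the desired disconnection of $V\setminus\mathrm{Sing}(V)$, contradicting Hartshorne's theorem and forcing $s=1$.
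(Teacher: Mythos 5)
Your proposal is correct and follows exactly the route the paper indicates: apply the Hartshorne connectedness theorem with $W:=\mathrm{Sing}(V)$, which normality renders admissible since the singular locus has codimension at least $2$, and then derive a contradiction from a hypothetical reducible decomposition. The paper leaves the deduction as a one-line remark, and your fleshed-out argument — including the key local-ring observation that a point lying on two distinct irreducible components has a non-domain, hence non-regular, local ring and is therefore singular — is the standard and correct way to complete it.
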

%
%------------------------------------------------------------------
%------------------------------------------------------------------
%------------------------------------------------------------------
%------------------------------------------------------------------
%------------------------------------------------------------------
%------------------------------------------------------------------
%------------------------------------------------------------------
%------------------------------------------------------------------
%
\section{A geometric approach to estimate value sets}
\label{sec: combinatorial preliminaries}
Let $m$ and $d$ be positive integers with $q>d\ge m+2$, and let
$\mathcal{A}$ be the family of \eqref{eq: definition A}. We may
assume without loss of generality that $G_1,\ldots, G_m$ are
elements of $\fq[A_{d-1},\ldots, A_1]$. Indeed, let
$\Pi:\mathcal{A}\to\fq$ be the mapping
$\Pi(T^d+a_{d-1}T^{d-1}+\cdots+a_0):=a_0$. Denote
$\mathcal{A}_{a_0}:=\Pi^{-1}(a_0)$. We have
$$\frac{1}{|\mathcal{A}|}\sum_{f\in
\mathcal{A}}\mathcal{V}(f)-\mu_dq=
\frac{1}{\sum\limits_{a_0\in\fq}|\mathcal{A}_{a_0}|}\sum\limits_{a_0\in\fq}
|\mathcal{A}_{a_0}|\Bigg(\frac{1}{|\mathcal{A}_{a_0}|}\sum_{f\in
\mathcal{A}_{a_0}}\mathcal{V}(f)-\mu_dq\Bigg).$$
As a consequence, if there exists a constant $E(d_1,\ldots, d_m,d)$
such that
$$\Bigg|\frac{1}{|\mathcal{A}_{a_0}|}\sum_{f\in
\mathcal{A}_{a_0}}\mathcal{V}(f)-\mu_dq\Bigg|\le E(d_1,\ldots,
d_m,d)q^{\frac{1}{2}}
$$
holds for any $a_0\in\fq$, then we conclude that
\begin{align*}
\Bigg|\frac{1}{|\mathcal{A}|}\sum_{f\in
\mathcal{A}}\mathcal{V}(f)-\mu_dq\Bigg|&\le
\frac{1}{\sum\limits_{a_0\in\fq}|\mathcal{A}_{a_0}|}
\sum\limits_{a_0\in\fq} |\mathcal{A}_{a_0}|E(d_1,\ldots,
d_m,d)q^{\frac{1}{2}}\\&\le E(d_1,\ldots, d_m,d)q^{\frac{1}{2}}.
\end{align*}
Further, as $\mathcal{V}(f)=\mathcal{V}(f+a_0)$ for any
$f\in\mathcal{A}$, we shall also assume that $f(0)=0$ for any
$f\in\mathcal{A}$.

Observe that, given $f\in \mathcal{A}$, $\mathcal{V}(f)$ equals the
number of $a_0\in \fq$ for which $f+a_0$ has at least one root in
$\fq$. If $\K$ is any of the fields $\fq$ or $\cfq$, by $\K[T]_d$ we
denote the set of monic polynomials of $\K[T]$ of degree $d$. Let
$\mathcal{N}: \fq[T]_d \to \mathbb{Z}_{\geq 0 }$ be the counting
function of the number of roots in $\fq$ and
$\boldsymbol{1}_{\{\mathcal{N}>0\}}:\fq[T]_d \to \{0,1\}$ the
characteristic function of the set of polynomials having at least
one root in $\fq$. We deduce that
\begin{align*}
\sum_{f\in \mathcal{A}}\mathcal{V}(f)&= \sum_{a_0\in\fq}\sum_{f\in
\mathcal{A}} \boldsymbol{1}_{\{\mathcal{N}>0\}}
(f+a_0)\\&=\big|\{f+a_0\in\mathcal{A}+ \fq: \mathcal{N}(f+a_0)>0
\}\big|.
\end{align*}

For a set $\mathcal{X}\subset\fq$, we define
$\mathcal{S}_{\mathcal{X}}^{\mathcal{A}} \subset \fq[T]_d$ as the
set of polynomials  $f+a_0 \in \mathcal{A}+\fq$ vanishing on
$\mathcal{X}$, namely
$$
\mathcal{S}_{\mathcal{X}}^{\mathcal{A}}:=\{f+a_0 \in
\mathcal{A}+\fq:\, (f+a_0)(x)=0\textrm{ for any }x\in\mathcal{X}\}.
$$
For $r\in\N$ we shall use the symbol $\mathcal{X}_r$ to denote a
subset of $\fq$ of $r$ elements. Our approach to determine the
asymptotic behavior of $\mathcal{V}(\mathcal{A})$ relies on the
following combinatorial result.
\begin{lemma}\label{lemma: reduction to interp sets}
Given $d,m\in\N$ with $q>d\ge m+2$, we have
\begin{equation}\label{eq: our formula for value sets}
\mathcal{V}(\mathcal{A})=\frac{1}{|\mathcal{A}|}
\sum_{r=1}^d(-1)^{r-1}\sum_{\mathcal{X}_r \subset \fq}|
\mathcal{S}_{\mathcal{X}_r}^{\mathcal{A}}|.
\end{equation}
\end{lemma}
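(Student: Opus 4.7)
The plan is to start from the identity already derived in the text,
$$\sum_{f\in \mathcal{A}}\mathcal{V}(f)=\big|\{f+a_0\in\mathcal{A}+\fq:\mathcal{N}(f+a_0)>0\}\big|,$$
and expand the right-hand side by inclusion--exclusion over the set of $\fq$--rational roots. For each $x\in\fq$, let $\mathcal{B}_x\subset\mathcal{A}+\fq$ denote the set of polynomials $g=f+a_0$ having $x$ as a root. Then the cardinality on the right equals $|\bigcup_{x\in\fq}\mathcal{B}_x|$, and the standard inclusion--exclusion formula gives
$$\Big|\bigcup_{x\in\fq}\mathcal{B}_x\Big|=\sum_{r=1}^{q}(-1)^{r-1}\sum_{\mathcal{X}_r\subset\fq}\Big|\bigcap_{x\in\mathcal{X}_r}\mathcal{B}_x\Big|.$$

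Next, I would identify the intersection with the interpolating set. By definition, $\bigcap_{x\in\mathcal{X}_r}\mathcal{B}_x$ is precisely the set of $g=f+a_0\in\mathcal{A}+\fq$ vanishing on $\mathcal{X}_r$, namely $\mathcal{S}_{\mathcal{X}_r}^{\mathcal{A}}$. The key remark that lets me truncate the outer sum at $r=d$ is that every element of $\mathcal{A}+\fq$ is a monic polynomial of degree $d$, and hence has at most $d$ roots in $\fq$; so $\mathcal{S}_{\mathcal{X}_r}^{\mathcal{A}}=\emptyset$ as soon as $|\mathcal{X}_r|=r>d$. Combining these observations yields
$$\sum_{f\in\mathcal{A}}\mathcal{V}(f)=\sum_{r=1}^{d}(-1)^{r-1}\sum_{\mathcal{X}_r\subset\fq}|\mathcal{S}_{\mathcal{X}_r}^{\mathcal{A}}|,$$
and dividing through by $|\mathcal{A}|$ produces the claimed formula \eqref{eq: our formula for value sets}.

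There is essentially no hard step here: the argument is a bookkeeping exercise, and the only points requiring a moment of care are (i) checking that the bijection between a polynomial $g\in\mathcal{A}+\fq$ and the pair $(f,a_0)\in\mathcal{A}\times\fq$ is compatible with the counting (which is immediate from the normalization $f(0)=0$ imposed just above the lemma, guaranteeing that $g$ uniquely determines $a_0=g(0)$ and $f=g-a_0$), and (ii) recalling that $\mathcal{V}(f)=|\{f(c):c\in\fq\}|$ coincides with the number of $a_0\in\fq$ such that $f+a_0$ has at least one root in $\fq$, via the substitution $a_0\mapsto -f(c)$. The remainder is a pure application of inclusion--exclusion, with the degree bound controlling the length of the alternating sum.
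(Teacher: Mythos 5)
Your proof is correct and follows essentially the same route as the paper's: the set you call $\mathcal{B}_x$ is exactly the paper's $\mathcal{S}_{\{x\}}^{\mathcal{A}}$, and the argument proceeds identically via inclusion--exclusion plus the degree bound to truncate the sum at $r=d$.
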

\begin{proof}
Given a subset $\mathcal{X}_r:=\{\alpha_1,\ldots,
\alpha_r\}\subset\fq$, consider the set
$\mathcal{S}_{\mathcal{X}_r}^{\mathcal{A}} \subset\fq[T]_d$ defined
as above. It is easy to see that
$\mathcal{S}_{\mathcal{X}_r}^{\mathcal{A}}=
\bigcap_{i=1}^{r}\mathcal{S}_{\{\alpha_i\}}^{\mathcal{A}}$ and
$$
\big|\{f+a_0\in \mathcal{A}+\fq:\,\, \mathcal{N}(f+a_0)>0
\}\big|=\Bigg\vert \bigcup_{x\in\fq}
\mathcal{S}_{\{x\}}^{\mathcal{A}}\Bigg\vert.
$$
Therefore the inclusion--exclusion principle implies
$$
\mathcal{V}(\mathcal{A})=\frac{1}{|\mathcal{A}|}\Bigg|\bigcup_{x\in
\fq}\mathcal{S}_{\{x\}}^{\mathcal{A}}\Bigg|=\frac{1}{|\mathcal{A}|}
\sum_{r=1}^{q}(-1)^{r-1}\sum_{\mathcal{X}_r \subset \fq}|
\mathcal{S}_{\mathcal{X}_r}^{\mathcal{A}}|.
$$
Now $|\mathcal{S}_{\mathcal{X}_r}^{\mathcal{A}}|=0$ for $r>d$,
because a polynomial of degree $d$ cannot vanish on more than $d$
elements of $\fq$. This readily implies the lemma.
\end{proof}

Lemma \ref{lemma: reduction to interp sets} shows that the behavior
of $\mathcal{V}(\mathcal{A})$ is determined by that of
\begin{equation}\label{eq:number S_j(A)}
\mathcal{S}_r^{\mathcal{A}}:=\sum_{\mathcal{X}_r \subset \fq}|
\mathcal{S}_{\mathcal{X}_r}^{\mathcal{A}}|,
\end{equation}
for $1\leq r \leq d$, which are the subject of the next sections.
%
%------------------------------------------------------------------
%------------------------------------------------------------------
%------------------------------------------------------------------
%------------------------------------------------------------------
%
\subsection{A geometric approach to estimate
$\mathcal{S}_r^{\mathcal{A}}$} \label{sec: geometric approach}
Fix $r$ with $1\leq r\leq d$. Let $A_{d-1},\ldots, A_0$ be
indeterminates over $\cfq$ and let
$G_1,\ldots,G_m\in\fq[A_{d-1},\ldots,A_1]$ be the polynomials
defining the family $\mathcal{A}$ of \eqref{eq: definition A}. Set
$\boldsymbol A:=(A_{d-1},\dots,A_1)$ and $\boldsymbol
A_0:=(\boldsymbol A,A_0)$. To estimate
$\mathcal{S}_r^{\mathcal{A}}$, we introduce the following
definitions and notations. Let $T,T_1,\ldots, T_r$ be new
indeterminates over $\cfq$ and denote $\boldsymbol
T:=(T_1,\ldots,T_r)$. Consider the polynomial $F \in \fq[\boldsymbol
A_0,T]$ defined as
\begin{equation}\label{eq: polynomial F}
F(\boldsymbol A_0, T):=T^d+A_{d-1}T^{d-1}+\dots+A_1T+A_0.
\end{equation}
Observe that if $\boldsymbol a_0\in \fq^d$, then we may write
$F(\boldsymbol a_0,T)=f+a_0$, where $f\in\fq[T]_d$ and $f(0)=0$.

Consider the affine quasi--$\fq$--variety $\Gamma_r \subset
\A^{d+r}$ defined as follows:
\begin{align*}
\Gamma_{r}:=\{(\boldsymbol a,a_0,
\boldsymbol\alpha)\in\A^{d}\times\A^r:\ \alpha_i &\neq \alpha_j\ (1
\leq i<j \leq r),\\ F(\boldsymbol a_0,\alpha_i)&=0\ (1 \leq i \leq
r), \ G_k(\boldsymbol a)=0 \, (1 \leq k \leq m)\}.
\end{align*}
Our next result explains how the number $|\Gamma_r(\fq)|$ of
$\fq$--rational points of $\Gamma_r$ is related to the numbers
$\mathcal{S}_r^\mathcal{A}$ $(1\le r\le d)$.
\begin{lemma} \label{lemma: relacion entre gamma m,n y S m,n}
Let  $r$ be an integer with $1\leq r\leq d$. Then
$$\frac{|\Gamma_{r}(\fq)|}{r!}=\mathcal{S}_r^{\mathcal{A}}.$$
\end{lemma}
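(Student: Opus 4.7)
The plan is to exhibit an $r!$-to-$1$ surjection from $\Gamma_r(\fq)$ onto the set
$$
\mathcal{P}_r:=\bigl\{(f+a_0,\mathcal{X}_r):\ \mathcal{X}_r\subset\fq,\ |\mathcal{X}_r|=r,\ f+a_0\in\mathcal{S}_{\mathcal{X}_r}^{\mathcal{A}}\bigr\},
$$
whose cardinality is exactly $\mathcal{S}_r^{\mathcal{A}}$ by the definition \eqref{eq:number S_j(A)}.

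First I would define the forward map. Given a point $(\boldsymbol a,a_0,\boldsymbol\alpha)\in\Gamma_r(\fq)$ with $\boldsymbol a=(a_{d-1},\ldots,a_1)$ and $\boldsymbol\alpha=(\alpha_1,\ldots,\alpha_r)$, set $f:=T^d+a_{d-1}T^{d-1}+\cdots+a_1T\in\fq[T]_d$ and $\mathcal{X}_r:=\{\alpha_1,\ldots,\alpha_r\}$. Since $G_k(\boldsymbol a)=0$ for $1\le k\le m$ and $f(0)=0$, we have $f\in\mathcal{A}$; and since the $\alpha_i$ are pairwise distinct, $|\mathcal{X}_r|=r$. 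The condition $F(\boldsymbol a_0,\alpha_i)=0$ for all $i$ translates to $(f+a_0)(\alpha)=0$ for every $\alpha\in\mathcal{X}_r$, so $f+a_0\in\mathcal{S}_{\mathcal{X}_r}^{\mathcal{A}}$. Thus the assignment $(\boldsymbol a,a_0,\boldsymbol\alpha)\mapsto(f+a_0,\mathcal{X}_r)$ is a well-defined map $\varphi\colon\Gamma_r(\fq)\to\mathcal{P}_r$.

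Next I would check surjectivity and count fibers. Given $(f+a_0,\mathcal{X}_r)\in\mathcal{P}_r$, write $f+a_0=T^d+a_{d-1}T^{d-1}+\cdots+a_1T+a_0$ and $\mathcal{X}_r=\{\beta_1,\ldots,\beta_r\}$. For every ordering $\sigma\in\mathfrak{S}_r$, the tuple $(\boldsymbol a,a_0,\beta_{\sigma(1)},\ldots,\beta_{\sigma(r)})$ lies in $\Gamma_r(\fq)$ and maps to $(f+a_0,\mathcal{X}_r)$ under $\varphi$. Conversely, any preimage of $(f+a_0,\mathcal{X}_r)$ under $\varphi$ must have coefficient vector $(\boldsymbol a,a_0)$ determined by the polynomial, and its last $r$ coordinates must be an ordering of $\mathcal{X}_r$; since the $\alpha_i$ must be pairwise distinct, there are exactly $r!$ such orderings. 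Hence $|\varphi^{-1}(f+a_0,\mathcal{X}_r)|=r!$ for every element of $\mathcal{P}_r$.

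Combining these two observations yields $|\Gamma_r(\fq)|=r!\cdot|\mathcal{P}_r|=r!\cdot\mathcal{S}_r^{\mathcal{A}}$, which gives the desired identity. There is no real obstacle here: the argument is a direct bookkeeping of definitions, the only point requiring slight care being the reduction that allowed us to take $G_1,\ldots,G_m$ in $\fq[A_{d-1},\ldots,A_1]$ and to assume $f(0)=0$ for $f\in\mathcal{A}$, which is exactly what makes the coefficient vector $\boldsymbol a$ (of length $d-1$) together with the free parameter $a_0$ parametrize $\mathcal{A}+\fq$ bijectively.
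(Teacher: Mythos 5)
Your proof is correct and is essentially the same argument as the paper's: the paper phrases it as a free $\mathbb{S}_r$-action on $\Gamma_r(\fq)$ whose orbit space is in bijection with $\bigsqcup_{\mathcal{X}_r}\mathcal{S}_{\mathcal{X}_r}^{\mathcal{A}}$, while you phrase it as an $r!$-to-one map $\varphi$ onto $\mathcal{P}_r$; the fibers of your $\varphi$ are exactly the orbits of that action.
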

\begin{proof}
Let $(\boldsymbol a_0, \boldsymbol\alpha)$ be a point of
$\Gamma_{r}(\fq)$ and $\sigma:\{1,\dots,r\}\to\{1,\dots,r\}$ an
arbitrary permutation. Let $\sigma(\boldsymbol\alpha)$ be the image
of $\boldsymbol{\alpha}$ by the linear mapping induced by $\sigma$.
Then it is clear that $\big(\boldsymbol a_0,
\sigma(\boldsymbol\alpha)\big)$ belong to $\Gamma_{r}(\fq)$.
Furthermore, $\sigma(\boldsymbol\alpha)=\boldsymbol{\alpha}$ if and
only if $\sigma$ is the identity permutation. This shows that
$\mathbb{S}_r$, the symmetric group
 of $r$ elements, acts over
the set $\Gamma_{r}(\fq)$ and each orbit under this action has $r!$
elements.

The orbit of an arbitrary point $(\boldsymbol a_0,
\boldsymbol{\alpha}) \in \Gamma_r(\fq)$ uniquely determines a
polynomial $F(\boldsymbol a_0,T)=f+a_0$ with $f \in \mathcal{A}$ and
a set $\mathcal{X}_r:=\{\alpha_1,\dots,\alpha_r\}\subset\fq$ with
$|\mathcal{X}_r|=r$ such that $(f+a_0)|_{\mathcal{X}_r}\equiv 0$.
Therefore, each orbit uniquely determines a set
$\mathcal{X}_r\subset\fq$ with $|\mathcal{X}_r|=r$  and an element
of $\mathcal{S}_{\mathcal{X}_r}^{\mathcal{A}}$. Reciprocally, to
each element of $\mathcal{S}_{\mathcal{X}_r}^{\mathcal{A}}$
corresponds a unique orbit of $\Gamma_{r}(\fq)$. This implies
$$\mbox{number of
orbits of }\Gamma_{r}(\fq)=\sum_{\mathcal{X}_r \subseteq \fq}|
\mathcal{S}_{\mathcal{X}_r}^{\mathcal{A}}|,$$
and finishes the proof of the lemma.
\end{proof}

To estimate the quantity $|\Gamma_{r}(\fq)|$ we shall consider the
Zariski closure $\mathrm{cl}(\Gamma_r)$ of $\Gamma_r\subset
\A^{d+r}$. Our aim is to provide explicit equations defining
$\mathrm{cl}(\Gamma_r)$. For this purpose, we shall use the
following notation. Let $X_1,\ldots, X_{l+1}$ be indeterminates over
$\cfq$ and $f\in\cfq[T]$ a polynomial of degree at most $l$. For
notational convenience, we define the 0th divided difference
$\Delta^0f\in\cfq[X_1]$ of $f$ as $\Delta^0f:=f(X_1)$. Further, for
$1\le i\le l$ we define the $i$th divided difference
$\Delta^if\in\cfq[X_1,\ldots, X_{i+1}]$ of $f$ as
$$\Delta^if(X_1,\ldots,X_{i+1})=\dfrac{\Delta^{i-1}f(X_1,\ldots,X_i)-
\Delta^{i-1}f(X_1,\ldots,X_{i-1},X_{i+1})}{X_i-X_{i+1}}.$$

With these notations, let $\Gamma^*_{r}\subset \A^{d+r}$ be the
$\fq$--variety defined as
\begin{align*}
\Gamma_{r}^*:=\{(\boldsymbol a_0,\boldsymbol\alpha)
\in\A^{d}\times\A^r:\ \Delta^{i-1}F(\boldsymbol a_0,
\alpha_1,\ldots,\alpha_i)=0\ &(1\leq i\leq r),\\ G_k(\boldsymbol
a_0)=0 \ &(1 \leq k \leq m) \},
\end{align*}
where $\Delta^{i-1}F(\boldsymbol a_0,T_1,\ldots,T_i)$  denotes the
$(i-1)$--divided difference of $F(\boldsymbol a_0,T)\in \cfq[T]$.
Next we relate the varieties $\Gamma_{r}$ and $\Gamma_{r}^*$ .
\begin{lemma}\label{lemma: relacion gamma_j y gamma_j estrella}
With notations and assumptions as above, we have
\begin{equation}\label{eq: relacion gamma_j y gamma_j estrella}
\Gamma_{r}=\Gamma_{r}^*\cap\{(\boldsymbol{a}_0,\boldsymbol{\alpha}):\alpha_i\neq\alpha_j\
(1\le i<j\le r)\}.
\end{equation}
\end{lemma}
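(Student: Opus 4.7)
The plan is to establish the set-theoretic equality in \eqref{eq: relacion gamma_j y gamma_j estrella} by proving both inclusions, with the standard closed-form expression for divided differences as the workhorse. Specifically, for any $F\in\cfq[T]$ and pairwise distinct $\alpha_1,\ldots,\alpha_i\in\cfq$,
$$
\Delta^{i-1}F(\alpha_1,\ldots,\alpha_i)=\sum_{j=1}^{i}\frac{F(\alpha_j)}{\prod_{k\neq j}(\alpha_j-\alpha_k)}.
$$
This identity follows by a routine induction on $i$ from the recursive definition of $\Delta^{i-1}F$ given in the excerpt, and will be assumed below.

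For the inclusion $\Gamma_r\subseteq \Gamma_r^*\cap\{\alpha_i\neq\alpha_j\ (i<j)\}$, I would take $(\boldsymbol a_0,\boldsymbol\alpha)\in\Gamma_r$. The distinctness of the coordinates is part of the definition of $\Gamma_r$, and the equations $G_k(\boldsymbol a_0)=0$ appear in both systems. It then remains to check that $\Delta^{i-1}F(\boldsymbol a_0,\alpha_1,\ldots,\alpha_i)=0$ for every $i$ with $1\le i\le r$; but this is immediate from the closed-form identity above, since every numerator $F(\boldsymbol a_0,\alpha_j)$ vanishes.

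For the reverse inclusion I would argue by induction on $r$. The base case $r=1$ is trivial, since $\Delta^0F(\boldsymbol a_0,\alpha_1)=F(\boldsymbol a_0,\alpha_1)$. For the inductive step, given $(\boldsymbol a_0,\boldsymbol\alpha)\in\Gamma_r^*$ with pairwise distinct coordinates, the projection $(\boldsymbol a_0,\alpha_1,\ldots,\alpha_{r-1})$ clearly lies in $\Gamma_{r-1}^*\cap\{\alpha_i\neq\alpha_j\}$, so by the inductive hypothesis it belongs to $\Gamma_{r-1}$; in particular $F(\boldsymbol a_0,\alpha_j)=0$ for $1\le j\le r-1$. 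Applying the closed-form identity to $\Delta^{r-1}F(\boldsymbol a_0,\alpha_1,\ldots,\alpha_r)=0$, all summands but the last vanish, leaving
$$
0=\frac{F(\boldsymbol a_0,\alpha_r)}{\prod_{k<r}(\alpha_r-\alpha_k)}.
$$
Since distinctness makes the denominator nonzero, $F(\boldsymbol a_0,\alpha_r)=0$, and hence $(\boldsymbol a_0,\boldsymbol\alpha)\in\Gamma_r$.

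There is no real obstacle here: the only nontrivial ingredient is the closed-form formula for divided differences, which is classical, and the rest is a direct induction. The whole argument amounts essentially to observing that the divided differences and the evaluations encode the same information once the arguments are known to be distinct.
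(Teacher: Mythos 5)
Your proof is correct and follows essentially the same strategy as the paper's: both directions reduce to the observation that, given distinctness of the $\alpha_j$, the vanishing of the divided differences $\Delta^{i-1}F$ for $i=1,\dots,r$ is equivalent to the vanishing of the values $F(\boldsymbol a_0,\alpha_i)$, established by a forward induction that peels off one coordinate at a time. The only cosmetic difference is in the algebraic identity invoked: you use the classical Lagrange-type closed form $\Delta^{i-1}F(\alpha_1,\dots,\alpha_i)=\sum_j F(\alpha_j)/\prod_{k\neq j}(\alpha_j-\alpha_k)$, whereas the paper expresses $\Delta^{i-1}F$ as a linear combination with nonzero coefficients of the consecutive differences $F(\alpha_{j+1})-F(\alpha_j)$; the two are equivalent restatements of the same fact about divided differences, and either immediately yields $F(\boldsymbol a_0,\alpha_i)=0$ once the earlier values are known to vanish.
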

\begin{proof}
Let $(\boldsymbol a_0, \boldsymbol\alpha)$ be a point  of
$\Gamma_r$. By the definition of the divided differences of
$F(\boldsymbol a_0,T)$ we easily conclude that $(\boldsymbol a_0,
\boldsymbol\alpha)\in \Gamma_r^*$. On the other hand, let
$(\boldsymbol a_0, \boldsymbol\alpha)$ be a point belonging to the
set in the right--hand side of (\ref{eq: relacion gamma_j y gamma_j
estrella}). We claim that $F(\boldsymbol a_0,\alpha_i)=0$ for $1\le
i\le r$. We observe that $F(\boldsymbol a_0,\alpha_1)=
\Delta^0F(\boldsymbol a_0,\alpha_1)=0$. Arguing inductively, suppose
that we have $F(\boldsymbol a_0,\alpha_1)=\cdots =F(\boldsymbol
a_0,\alpha_{i-1})=0$. By definition $\Delta^{i-1}F(\boldsymbol a_0,
\alpha_1,\ldots,\alpha_i)$ can be expressed as a linear combination
with nonzero coefficients of the differences $F(\boldsymbol
a_0,\alpha_{j+1})-F(\boldsymbol a_0,\alpha_j)$ with $1\le j\le i-1$.
Combining the inductive hypothesis with the fact that
$\Delta^{i-1}F(\boldsymbol a_0, \alpha_1,\ldots,\alpha_i)=0$, we
easily conclude that $F(\boldsymbol a_0,\alpha_i)=0$, finishing thus
the proof of the claim.
\end{proof}
%
%------------------------------------------------------------------
%------------------------------------------------------------------
%------------------------------------------------------------------
%------------------------------------------------------------------
%------------------------------------------------------------------
%------------------------------------------------------------------
%------------------------------------------------------------------
%------------------------------------------------------------------
%
\section{On the geometry of the variety $\Gamma_{r}^*$}
\label{section: geometry of Gamma_r}
In this section we establish several properties of the geometry of
the affine $\fq$--variety $\Gamma_{r}^{*}$, assuming that the
polynomials $G_1,\ldots, G_m$ and the affine variety $V\subset\A^d$
they define satisfy certain the conditions that we now state. The
first conditions allow us to estimate the cardinality of
$\mathcal{A}$:
\begin{itemize}
\item[(${\sf H_1}$)] $G_1,\ldots, G_m$ form a regular
sequence and generate a radical ideal of $\fq[A_{d-1},\ldots, A_0]$.
\item[(${\sf H_2}$)] The variety $V\subset\A^d$
defined by $G_1,\ldots, G_m$ is normal.
\item[(${\sf H_3}$)] Let $G_1^{d_1},\ldots, G_m^{d_m}$ denote the
homogeneous parts of higher degree of $G_1,\ldots, G_m$. Then
$G_1^{d_1},\ldots, G_m^{d_m}$ satisfy (${\sf H_1}$) and (${\sf
H_2}$).
\end{itemize}

As stated in the introduction, we are interested in families
$\mathcal{A}$ for which $\mathcal{V}(\mathcal{A})
=\mu_dq+\mathcal{O}(q^{\frac{1}{2}})$. If many of the polynomials in
$\mathcal{A}+\fq$ are not square--free, then
$\mathcal{V}(\mathcal{A})$ might not behave as in the general case.
For $\mathcal{B}\subset\cfq[T]_d$, the set of elements of
$\mathcal{B}$ which are not square--free is called the {\em
discriminant locus} $\mathcal{D}(\mathcal{B})$ of $\mathcal{B}$.
With a slight abuse of notation, in what follows we identify each
$f_{\boldsymbol a_0}=T^d+a_{d-1}T^{d-1}+\dots+a_0\in\mathcal{B}$
with the $d$--tuple $\boldsymbol a_0:=(a_{d-1},\ldots, a_0)$, and
consider $\mathcal{B}$ as a subset of $\A^d$. For $f_{\boldsymbol
a_0}\in \mathcal{B}$, let $\mathrm{Disc}(f_{\boldsymbol
a_0}):=\mathrm{Res}(f_{\boldsymbol a_0},f_{\boldsymbol a_0}')$
denote the discriminant of $f_{\boldsymbol a_0}$, that is, the
resultant of $f_{\boldsymbol a_0}$ and its derivative
$f_{\boldsymbol a_0}'$. Since $f_{\boldsymbol a_0}$ has degree $d$,
by basic properties of resultants it follows that
\begin{align*}
\mathrm{Disc}(f_{\boldsymbol a_0})&=\mathrm{Disc}(F(\boldsymbol
A_0,T))|_{\boldsymbol A_0=\boldsymbol
a_0}\\&:=\mathrm{Res}(F(\boldsymbol A_0,T), \Delta^1F(\boldsymbol
A_0,T,T),T)|_{\boldsymbol A_0=\boldsymbol a_0},
\end{align*}
where the expression $\mathrm{Res}$ in the right--hand side denotes
resultant with respect to $T$. Observe that
$\mathcal{D}(\mathcal{B})=\{\boldsymbol a_0\in
\mathcal{B}:\mathrm{Disc}(F(\boldsymbol A_0,T))|_{\boldsymbol
A_0=\boldsymbol a_0}=0\}$.

We shall need further to consider first subdiscriminant loci. The
{\em first subdiscriminant locus} $\mathcal{S}_1(\mathcal{B})$ of
$\mathcal{B}\subset\cfq[T]_d$ is the set of $\boldsymbol
a_0\in\mathcal{B}$ for which the first subdiscriminant
$\mathrm{Subdisc}(f_{\boldsymbol
a_0}):=\mathrm{Subres}(f_{\boldsymbol a_0}, f'_{\boldsymbol a_0})$
vanishes, where $\mathrm{Subres}(f_{\boldsymbol a_0},
f'_{\boldsymbol a_0})$ denotes the first subresultant of
$f_{\boldsymbol a_0}$ and $f'_{\boldsymbol a_0}$. Since
$f_{\boldsymbol a_0}$ has degree $d$, basic properties of
subresultants imply
\begin{align*}
\mathrm{Subdisc}(f_{\boldsymbol
a_0})&=\mathrm{Subdisc}(F(\boldsymbol A_0,T))|_{\boldsymbol
A_0=\boldsymbol a_0}\\&:=\mathrm{Subres}(F(\boldsymbol A_0,T),
\Delta^1F(\boldsymbol A_0,T,T),T))|_{\boldsymbol A_0=\boldsymbol
a_0},
\end{align*}
where $\mathrm{Subres}$ in the right--hand side denotes first
subresultant with respect to $T$. We have
$\mathcal{S}_1(\mathcal{B})=\{\boldsymbol a_0\in
\mathcal{B}:\mathrm{Subdisc}(F(\boldsymbol A_0,T))|_{\boldsymbol
A_0=\boldsymbol a_0}=0\}$.

Our next condition requires that the discriminant and the first
subdiscriminant locus intersect well $V$. More precisely, we require
the condition:
\begin{itemize}
\item[(${\sf H_4}$)]
$V\cap\mathcal{D}(V)$ has codimension one in $V$, and
$V\cap\mathcal{D}(V)\cap\mathcal{S}_1(V)$ has codimension two in
$V$.
\end{itemize}

We shall prove that $\Gamma_{r}^{*}$ is a set--theoretic complete
intersection, whose singular locus has codimension at least $2$.
This will allow us to conclude that $\Gamma_{r}^{*}$ is an
ideal--theoretic complete intersection.
\begin{lemma}\label{lemma: gamma j is set-theoretic complete intersection}
$\Gamma_{r}^{*}$ is a set--theoretic complete intersection of
dimension $d-m$.
\end{lemma}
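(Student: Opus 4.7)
My plan for the proof is as follows.

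Since $\Gamma_r^*\subset\A^{d+r}$ is cut out by exactly $m+r=(d+r)-(d-m)$ polynomials---namely $G_1,\ldots,G_m$ together with the divided differences $\Delta^{i-1}F(\boldsymbol A_0,T_1,\ldots,T_i)$ for $1\le i\le r$---the lemma reduces to showing that $\Gamma_r^*$ has pure dimension $d-m$. Once this is established, the Cohen--Macaulay property of the ambient polynomial ring $\fq[\boldsymbol A_0,T_1,\ldots,T_r]$ automatically upgrades the defining polynomials to a regular sequence (any system of $c$ generators of an ideal of height $c$ in a Cohen--Macaulay ring is regular), making $\Gamma_r^*$ a set-theoretic complete intersection in the sense of Section \ref{sec: notions of algebraic geometry}.

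The lower bound $\dim C\ge d-m$ for every irreducible component $C$ of $\Gamma_r^*$ is immediate from Krull's height theorem applied to the $m+r$ defining equations in $\A^{d+r}$. For the matching upper bound I would use the projection $\pi:\Gamma_r^*\to V$ onto the $\boldsymbol A_0$-coordinates, where $V\subset\A^d$ denotes the variety cut out by $G_1,\ldots,G_m$; by hypothesis (${\sf H_1}$) this $V$ has pure dimension $d-m$. The key structural fact is the classical identity $\Delta^{i-1}(T^n)=h_{n-i+1}(T_1,\ldots,T_i)$ for complete homogeneous symmetric polynomials $h_\ell$, which yields
$$\Delta^{i-1}F(\boldsymbol A_0,T_1,\ldots,T_i)=\sum_{j=i-1}^d A_j\,h_{j-i+1}(T_1,\ldots,T_i),$$
with the convention $A_d:=1$. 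Viewed as a polynomial in $T_i$ over $\fq[\boldsymbol A_0,T_1,\ldots,T_{i-1}]$, this expression is monic of positive degree $d-i+1$ (since $r\le d$).

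Iterating this monic relation, each coordinate function $T_i$ is integral over $\fq[V][T_1,\ldots,T_{i-1}]$ in $\fq[\Gamma_r^*]$, and hence over $\fq[V]$; so $\fq[\Gamma_r^*]$ is integral over $\fq[V]$. Surjectivity of $\pi$ onto $V$ is immediate: given $\boldsymbol a_0\in V$, one successively picks $\alpha_i\in\cfq$ to be any root of the monic univariate polynomial $\Delta^{i-1}F(\boldsymbol a_0,\alpha_1,\ldots,\alpha_{i-1},T_i)\in\cfq[T_i]$. Therefore the ring extension $\fq[V]\hookrightarrow\fq[\Gamma_r^*]$ is injective and integral, whence $\dim\Gamma_r^*=\dim V=d-m$. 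Combined with the Krull lower bound above, this yields the required pure dimension and completes the argument.

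The only delicate step is isolating the monic-in-$T_i$ structure of $\Delta^{i-1}F$; once this identity is in hand, the integrality of $\pi$, the surjectivity onto $V$, and hence the dimension count, reduce to formal manipulations with integral ring extensions.
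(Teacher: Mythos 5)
Your proof is correct, but it takes a genuinely different route from the paper's. The paper proves directly, by induction on $i$, that $G_1,\ldots,G_m,\Delta^{i-1}F$ $(1\le i\le r)$ form a regular sequence: after observing that the zero set of $G_1,\ldots,G_m$ is $V\times\A^r$ (so $\Delta^0F=F(\boldsymbol A_0,T_1)$, being of positive degree in $T_1$, vanishes on no component), the inductive step notes that $\Delta^{j}F$ contains the monomial $T_{j+1}^{d-j}$ with unit coefficient while the earlier polynomials live in $\fq[\boldsymbol A_0,T_1,\ldots,T_j]$, so $\Delta^jF$ cannot be a zero divisor modulo them. You instead establish the pure dimension $d-m$ first --- the lower bound via Krull's height theorem, the upper bound via the finite surjective projection $\pi:\Gamma_r^*\to V$ (made explicit through the identity $\Delta^{i-1}(T^n)=h_{n-i+1}$, which shows each $\Delta^{i-1}F$ is monic of degree $d-i+1$ in $T_i$) --- and then invoke the Cohen--Macaulay unmixedness principle, namely that $m+r$ generators of a height-$(m+r)$ ideal in the CM ring $\fq[\boldsymbol A_0,\boldsymbol T]$ automatically form a regular sequence. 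Both arguments hinge on the same structural fact (monic in the new variable $T_i$), but the paper uses it to verify the regular-sequence condition directly and elementarily, whereas you use it to build an integral extension and then appeal to heavier commutative-algebra machinery. Your finiteness argument is in fact essentially the paper's Lemma \ref{lemma: Psi_r is finite}, proved later in Section \ref{section: geometry of Gamma_r}, so your route front-loads that result; the trade-off is a more conceptual but less self-contained proof.
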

\begin{proof}
By hypothesis $({\sf H_1})$, $G_1,\ldots, G_m$ form a regular
sequence. In order to prove that $G_1,\ldots, G_m,\Delta^{i-1}F(\boldsymbol
A_0,T_1,\ldots,T_i)$ $(1\le i\le r)$ form a regular sequence, we
argue by induction on $i$.

For $i=1$, we observe that the set of common zeros of $G_1,\ldots,
G_m$ in $\A^d\times \A^r$ is $V\times\A^r$, and each irreducible
component of $V\times\A^r$ is of the form $\mathcal{C}\times\A^r$,
where $\mathcal{C}$ is an irreducible component of $V$. As
$\Delta^0F(\boldsymbol A_0,T_1)=F(\boldsymbol A_0,T_1)$ is of degree
$d$ in $T_1$, it cannot vanish identically on any component
$\mathcal{C}\times\A^r$, which implies that it cannot be a zero
divisor modulo $G_1,\ldots, G_m$.

Now suppose that the assertion is proved for $1\le j\le r-1$, that
is, the polynomials $G_1,\ldots, G_m,\Delta^{i-1}F(\boldsymbol A_0,T_1,\ldots,T_i)$
$(1\le i\le j)$ form a regular sequence. These are all elements of
$\fq[\boldsymbol A_0,T_1,\ldots, T_j]$. On the other hand, the
monomial $T_{j+1}^{d-j}$ occurs in the dense representation of
$\Delta^jF(\boldsymbol A_0,T_1,\ldots,T_{j+1})$ with a nonzero
coefficient. We deduce that $\Delta^jF(\boldsymbol
A_0,T_1,\ldots,T_{j+1})$ cannot be a zero divisor modulo
$G_1,\ldots, G_m,\Delta^{i-1}F(\boldsymbol A_0,T_1,\ldots,T_i)$
$(1\le i\le j)$, which finishes the proof of our assertion. This
implies the statement of the lemma.
\end{proof}
%
%---------------------------------------------------------------------
%---------------------------------------------------------------------
%---------------------------------------------------------------------
%---------------------------------------------------------------------
%
\subsection{The dimension of the singular locus of $\Gamma_{r}^*$}
Next we show that the singular locus of $\Gamma_{r}^*$ has
codimension at least $2$ in $\Gamma_{r}^*$. For this purpose, we
carry out an analysis of the singular locus of $\Gamma_{r}^*$ which
generalizes that of \cite[Section 4.1]{MaPePr14} to this (eventually
nonlinear) setting. We start with the following criterion of
nonsingularity.
\begin{lemma}
\label{lemma: jacobian_F full rank implies nonsingular} Let
$J_{\boldsymbol G,\boldsymbol F}\in \cfq[\boldsymbol A_0,
\boldsymbol{T}]^{(m+r)\times (d+r)}$ be the Jacobian matrix of
$\boldsymbol G:=(G_1,\ldots,G_m)$ and $F(\boldsymbol A_0, T_i)$
$(1\leq i\leq r)$ with respect to $\boldsymbol A_0$, $\boldsymbol
T$, and let $(\boldsymbol a_0,\boldsymbol\alpha)\in \Gamma_{r}^*$.
If $J_{\boldsymbol G,\boldsymbol F}(\boldsymbol
a_0,\boldsymbol\alpha)$ has full rank, then $(\boldsymbol
a_0,\boldsymbol\alpha)$ is a nonsingular point of $\Gamma_{r}^*$.
\end{lemma}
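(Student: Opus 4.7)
The plan is to compare the given matrix $J_{\boldsymbol G,\boldsymbol F}$ with the Jacobian $J^*$ of the polynomials $G_1,\ldots,G_m,\Delta^0F,\ldots,\Delta^{r-1}F$ that actually cut out $\Gamma_r^*$, and then combine the resulting rank estimate with Lemma \ref{lemma: gamma j is set-theoretic complete intersection} and the universal inequality $\dim\mathcal{T}_xV\ge\dim_xV$.

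The bridge between the two Jacobians is Newton's interpolation identity. Since $\deg_T F=d\ge r$, iterating the recursion defining the divided differences yields the polynomial identity
$$
F(\boldsymbol A_0,T_j)=\sum_{i=1}^{j}\Delta^{i-1}F(\boldsymbol A_0,T_1,\ldots,T_i)\prod_{k=1}^{i-1}(T_j-T_k)\qquad(1\le j\le r)
$$
in $\cfq[\boldsymbol A_0,\boldsymbol T]$. Differentiating by the Leibniz rule and specializing at a point $(\boldsymbol a_0,\boldsymbol\alpha)\in\Gamma_r^*$, every summand in which a factor $\prod_{k<i}(T_j-T_k)$ receives the differential is annihilated by the equation $\Delta^{i-1}F(\boldsymbol a_0,\alpha_1,\ldots,\alpha_i)=0$, leaving
$$
dF(\boldsymbol A_0,T_j)\bigl|_{(\boldsymbol a_0,\boldsymbol\alpha)}=\sum_{i=1}^{j}\Bigl(\prod_{k=1}^{i-1}(\alpha_j-\alpha_k)\Bigr)\,d\bigl(\Delta^{i-1}F\bigr)\bigl|_{(\boldsymbol a_0,\boldsymbol\alpha)}.
$$
Thus at every point of $\Gamma_r^*$ each row of $J_{\boldsymbol G,\boldsymbol F}$ lies in the row space of $J^*$, so $\rank J_{\boldsymbol G,\boldsymbol F}\le\rank J^*$ there.

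A full-rank hypothesis on $J_{\boldsymbol G,\boldsymbol F}(\boldsymbol a_0,\boldsymbol\alpha)$ therefore forces $\rank J^*(\boldsymbol a_0,\boldsymbol\alpha)=m+r$ as well. Since $G_1,\ldots,G_m,\Delta^0F,\ldots,\Delta^{r-1}F$ all lie in $I(\Gamma_r^*)$, their differentials annihilate $\mathcal{T}_{(\boldsymbol a_0,\boldsymbol\alpha)}\Gamma_r^*$, giving $\dim\mathcal{T}_{(\boldsymbol a_0,\boldsymbol\alpha)}\Gamma_r^*\le(d+r)-(m+r)=d-m$. By Lemma \ref{lemma: gamma j is set-theoretic complete intersection} the variety $\Gamma_r^*$ has pure dimension $d-m$, and the universal bound $\dim\mathcal{T}_xV\ge\dim_xV$ then forces equality, which is precisely the definition of nonsingularity.

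The main obstacle is locating the right algebraic device linking the ``wrong'' equations $F(\boldsymbol A_0,T_i)=0$ to the divided differences that actually cut out $\Gamma_r^*$, and ensuring that this link survives at points of the closed locus $\Gamma_r^*\setminus\Gamma_r$ where some $\alpha_i$ coincide. Newton's identity does the job precisely because its remainder term carries the factor $\prod_{k=1}^{r}(T-T_k)$, which vanishes upon specializing $T$ to any $T_j$ with $j\le r$, leaving a triangular expression that is insensitive to coincidences among the $\alpha_i$.
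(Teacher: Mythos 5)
Your proposal is correct and follows essentially the same route as the paper: both use Newton's interpolation identity to recognize that $F(\boldsymbol A_0,T_j)$ lies in $I(\Gamma_r^*)$ and then bound $\dim\mathcal{T}_{(\boldsymbol a_0,\boldsymbol\alpha)}\Gamma_r^*$ by $d-m$ via the kernel of the Jacobian, concluding by comparison with the pure dimension from Lemma \ref{lemma: gamma j is set-theoretic complete intersection}. The only cosmetic difference is that you make the linear dependence of $dF(\boldsymbol A_0,T_j)$ on the $d(\Delta^{i-1}F)$ explicit (passing through $J^*$), whereas the paper invokes directly that differentials of functions vanishing on $\Gamma_r^*$ annihilate the tangent space.
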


\begin{proof}
Considering the Newton form of the polynomial interpolating
$F(\boldsymbol a_0,T)$ at $\alpha_1,\ldots,\alpha_r$ we easily
deduce that $F(\boldsymbol a_0,\alpha_i)=0$ for $1 \leq i \leq r$.
This shows that $F(\boldsymbol A_0,T_i)$ vanishes on $\Gamma_r^*$
for $1\le i\le r$. As a consequence, any element of the tangent
space $\mathcal{T}_{(\boldsymbol a_0,\boldsymbol\alpha)}\Gamma_r^*$
of $\Gamma_r^*$ at $(\boldsymbol a_0,\boldsymbol\alpha)$ belongs to
the kernel of the Jacobian matrix $J_{\boldsymbol G,\boldsymbol
F}(\boldsymbol a_0,\boldsymbol\alpha)$.

By hypothesis, the $(m+r)\times (d+r)$ matrix $J_{\boldsymbol
G,\boldsymbol F}(\boldsymbol a_0,\boldsymbol\alpha)$ has full rank
$m+r$, and thus its kernel has dimension $d-m$. We conclude that the
tangent space $\mathcal{T}_{(\boldsymbol
a_0,\boldsymbol\alpha)}\Gamma_r^*$ has dimension at most $d-m$.
Since $\Gamma_r^*$ is of pure dimension $d-m$, it follows that
$(\boldsymbol a_0,\boldsymbol\alpha)$ is a nonsingular point of
$\Gamma_r^*$.
\end{proof}

Let $(\boldsymbol a_0,\boldsymbol\alpha):=(\boldsymbol
a_0,\alpha_1,\ldots,\alpha_r)$ be an arbitrary point of $\Gamma_r^*$
and let $f_{\boldsymbol a_0}:=F(\boldsymbol a_0,T)$. Then the
Jacobian matrix $J_{\boldsymbol G,\boldsymbol F}(\boldsymbol
a_0,\boldsymbol\alpha)$ has the following form:
$$J_{\boldsymbol G,\boldsymbol F}(\boldsymbol a_0,\boldsymbol\alpha):=\left(\begin{array}{cc}
\dfrac{\partial \boldsymbol G}{\partial \boldsymbol A_0}(\boldsymbol a_0,\boldsymbol{\alpha}) & \boldsymbol0\\
&\\
\dfrac{\partial F}{\partial \boldsymbol A_0}(\boldsymbol
a_0,\boldsymbol{\alpha})& \dfrac{\partial F}{\partial \boldsymbol
T}(\boldsymbol a_0,\boldsymbol{\alpha})
\end{array}\right).$$
Observe that $(\partial F/\partial\boldsymbol T)(\boldsymbol
a_0,\boldsymbol\alpha)$ is a diagonal matrix whose $i$th diagonal
entry is $f_{\boldsymbol a_0}'(\alpha_i)$. If $(\partial \boldsymbol
G /
\partial \boldsymbol A_0)(\boldsymbol a_0,\boldsymbol{\alpha})$ has full
rank and all the roots in $\cfq$ of $f_{\boldsymbol a_0}$ are
simple, then $J_{\boldsymbol G,\boldsymbol F}(\boldsymbol
a_0,\boldsymbol\alpha)$ has full rank and $(\boldsymbol
a_0,\boldsymbol\alpha)$ is a regular point of $\Gamma_r^*$.
Therefore, to prove that the singular locus of $\Gamma_r^*$ is a
subvariety of codimension at least $2$ in $\Gamma_r^*$, it suffices
to consider the set of points $(\boldsymbol a_0,\boldsymbol\alpha)
\in\Gamma_r^*$ for which either $\boldsymbol a_0$ is a singular
point of $V$, or at least one coordinate of $\boldsymbol{\alpha}$ is
a multiple root of $f_{\boldsymbol a_0}$. To deal with the first of
these cases, consider the morphism of $\fq$--varieties defined as
follows:
\begin{equation}\label{eq: morfismo finito - mean}
\begin{array}{rrcl}
\Psi_r:& {\Gamma}_{r}^{*}& \to& V\\
   &(\boldsymbol a_0,\boldsymbol\alpha)& \mapsto&\boldsymbol a_0,
\end{array}
\end{equation}
We have the following result.
\begin{lemma}\label{lemma: Psi_r is finite}
$\Psi_r$ is a finite dominant morphism.
\end{lemma}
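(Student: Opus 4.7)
The plan is to establish separately the two properties in the definition of a finite dominant morphism: surjectivity of $\Psi_r$ onto $V$, and integrality of the induced ring extension $\fq[V]\hookrightarrow\fq[\Gamma_r^*]$.

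For surjectivity, I would fix $\boldsymbol{a}_0\in V$ and construct a preimage $(\boldsymbol{a}_0,\boldsymbol{\alpha})\in\Gamma_r^*$ one coordinate at a time. Having already chosen $\alpha_1,\ldots,\alpha_{i-1}\in\cfq$ satisfying the first $i-1$ divided-difference equations, the specialization $\Delta^{i-1}F(\boldsymbol{a}_0,\alpha_1,\ldots,\alpha_{i-1},T_i)\in\cfq[T_i]$ will have degree $d-i+1\ge 1$ (since $i\le r\le d$) with nonzero leading coefficient, and hence has a root $\alpha_i\in\cfq$. Iterating $r$ times produces the desired preimage, and in particular guarantees that the ring map $\fq[V]\to\fq[\Gamma_r^*]$ is injective.

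For integrality, I would appeal to the same monic-in-$T_i$ structure of $\Delta^{i-1}F$ already used in the proof of Lemma~\ref{lemma: gamma j is set-theoretic complete intersection}: this furnishes, in $\fq[\Gamma_r^*]$, a monic relation for $T_i$ of degree $d-i+1$ with coefficients in $\fq[\boldsymbol{A}_0,T_1,\ldots,T_{i-1}]$. A short induction on $i$ then shows that each $T_i$ is integral over $\fq[V]$, hence that $\fq[\Gamma_r^*]$ is integral over $\fq[V]$. Combined with the surjectivity above, this yields finiteness and dominance.

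The step requiring the most care is verifying that the coefficient of $T_i^{d-i+1}$ in $\Delta^{i-1}F(\boldsymbol{A}_0,T_1,\ldots,T_i)$ is a nonzero constant, in fact $1$. I would prove this by induction on $i$ using the recursive definition of divided differences together with the fact that $F$ is monic in $T$; alternatively, the partial-fraction expansion $\Delta^{i-1}F=\sum_{j=1}^i F(\boldsymbol{A}_0,T_j)/\prod_{k\ne j}(T_j-T_k)$ shows that the top-degree contribution in $T_i$ comes exclusively from the summand $j=i$, yielding leading coefficient $1$. Once this is in hand, both steps above go through without further difficulty.
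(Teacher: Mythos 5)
Your proof is correct, but the integrality step takes a more laborious route than the paper's. The paper observes that $F(\boldsymbol{A}_0,T_i)$ itself vanishes on $\Gamma_r^*$ for each $i$ (this follows from the Newton form of the interpolating polynomial, as already noted in the proof of Lemma~\ref{lemma: jacobian_F full rank implies nonsingular}); since $F(\boldsymbol{A}_0,T_i)$ is monic of degree $d$ in $T_i$ with coefficients lying in $\cfq[\boldsymbol{A}_0]$ alone, the class $F(\boldsymbol\xi_0,T_i)\in\cfq[V][T_i]$ directly furnishes, in one step and with no induction, a monic relation for $t_i$ with coefficients in $\cfq[V]$. You instead work with the defining equations $\Delta^{i-1}F(\boldsymbol{A}_0,T_1,\ldots,T_i)$, which are monic of degree $d-i+1$ in $T_i$ but have coefficients in $\cfq[\boldsymbol{A}_0,T_1,\ldots,T_{i-1}]$, so you need an induction on $i$ (plus the standard fact that integrality is transitive) to conclude. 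Both arguments are valid; the paper's has the advantage of bypassing the induction, while yours has the minor pedagogical virtue of working directly from the generators of the ideal of $\Gamma_r^*$ and of making explicit the leading-coefficient computation that underlies the regular-sequence claim of Lemma~\ref{lemma: gamma j is set-theoretic complete intersection}. Your surjectivity argument, constructing $\alpha_i$ as a root of the specialized $\Delta^{i-1}F$ (degree $d-i+1\ge 1$, nonzero leading coefficient, algebraically closed field), is a correct fleshing-out of a point the paper merely declares easy.
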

\begin{proof}
It is easy to see that $\Psi_r$ is a surjective mapping. Therefore,
it suffices to show that the coordinate function $t_i$ of
$\cfq[\Gamma_r^*]$ defined by $T_i$ satisfies a monic equation with
coefficients in $\cfq[V]$ for $1\le i\le r$. Denote by $\xi_j$ the
coordinate function of $V$ defined by $A_j$ for $0\le j\le d-1$, and
set $\boldsymbol\xi_0:=(\xi_{d-1},\ldots,\xi_0)$. Since the
polynomial $F(\boldsymbol{A}_0,T_i)$ vanishes on $\Gamma_r^*$ for
$1\le i\le r$, and is a monic element of $\cfq[\boldsymbol
A_0][T_i]$, we deduce that the monic element
$F(\boldsymbol\xi_0,T_i)$ of $\cfq[V][T_i]$ annihilates $t_i$ in
$\Gamma_r^*$ for $1\le i\le r$. This shows that the ring extension
$\cfq[V]\hookrightarrow \cfq[\Gamma_r^*]$ is integral, namely
$\Psi_r$ is a finite dominant mapping.
\end{proof}

A first consequence of Lemma \ref{lemma: Psi_r is finite} is that
the set of points $(\boldsymbol
a_0,\boldsymbol\alpha)\in\Gamma_{r}^*$ with $\boldsymbol a_0$
singular are under control.
\begin{corollary}\label{coro: incidence var singular points V in codim 2}
The set $\mathcal{W}_0$ of points $(\boldsymbol
a_0,\boldsymbol\alpha)\in\Gamma_{r}^*$ with $\boldsymbol
a_0\in\mathrm{Sing}(V)$ is contained in a subvariety of codimension
2 of $\Gamma_r^*$.
\end{corollary}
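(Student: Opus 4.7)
The plan is to exploit the finite dominant morphism $\Psi_r\colon\Gamma_r^*\to V$ from Lemma \ref{lemma: Psi_r is finite}, together with the normality hypothesis $({\sf H_2})$ on $V$. Observe first that $\mathcal{W}_0$ is exactly the preimage $\Psi_r^{-1}(\mathrm{Sing}(V))$, since membership of a point $(\boldsymbol{a}_0,\boldsymbol{\alpha})$ in $\mathcal{W}_0$ is determined entirely by its image $\boldsymbol{a}_0$ under $\Psi_r$.

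Next I would use hypothesis $({\sf H_2})$: since $V$ is normal, the singular locus $\mathrm{Sing}(V)$ has codimension at least $2$ in $V$. Because $V$ is a complete intersection defined by $G_1,\ldots,G_m$ (by $({\sf H_1})$), it has pure dimension $d-m$, so every irreducible component $S$ of $\mathrm{Sing}(V)$ satisfies $\dim S\le d-m-2$.

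Now I would apply the fact recalled at the end of Section \ref{sec: notions of algebraic geometry}: for a finite dominant morphism, the preimage of an irreducible closed subset $S\subset V$ is of pure dimension $\dim S$. Decomposing $\mathrm{Sing}(V)=S_1\cup\cdots\cup S_k$ into irreducible components and applying this to $\Psi_r$ (finite and dominant by Lemma \ref{lemma: Psi_r is finite}) yields
\[
\dim\mathcal{W}_0=\dim\Psi_r^{-1}(\mathrm{Sing}(V))=\max_{1\le j\le k}\dim\Psi_r^{-1}(S_j)=\max_{1\le j\le k}\dim S_j\le d-m-2.
\]
Since $\Gamma_r^*$ has pure dimension $d-m$ by Lemma \ref{lemma: gamma j is set-theoretic complete intersection}, this shows that $\mathcal{W}_0$ is contained in a closed subvariety of $\Gamma_r^*$ of codimension at least $2$, as claimed.

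There is no real obstacle here, since all ingredients are in place; the only thing to be careful with is that the finiteness of $\Psi_r$ genuinely preserves the dimension on preimages of irreducible subvarieties, which is the cited general fact, and that the normality of $V$ supplies the required codimension bound for $\mathrm{Sing}(V)$.
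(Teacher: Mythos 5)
Your proof is correct and matches the paper's argument: both identify $\mathcal{W}_0=\Psi_r^{-1}(\mathrm{Sing}(V))$, invoke normality of $V$ from $({\sf H_2})$ to get $\mathrm{Sing}(V)$ of codimension at least $2$, and use the finiteness of $\Psi_r$ to transfer the codimension bound to $\Gamma_r^*$. You merely spell out the dimension bookkeeping (decomposing $\mathrm{Sing}(V)$ into irreducible components and applying the cited fact on finite morphisms componentwise), which the paper leaves implicit.
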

\begin{proof}
Hypotheses $({\sf H_1})$ and $({\sf H_2})$ imply that $V$ is a
normal complete intersection. It follows that $\mathrm{Sing}(V)$ has
codimension at least two in $V$. Then
$\mathcal{W}_0=\Psi_r^{-1}(\mathrm{Sing}(V))$ has codimension at
least two in $\Gamma_r^*$.
\end{proof}

By Corollary \ref{coro: incidence var singular points V in codim 2}
it suffices to consider the set of singular points $(\boldsymbol
a_0,\boldsymbol\alpha)$ of $\Gamma_r^*$ with $\boldsymbol a_0\in
V\setminus\mathrm{Sing}(V)$. By the remarks before Lemma \ref{lemma:
Psi_r is finite}, if $(\boldsymbol a_0,\boldsymbol\alpha)$ is such a
singular point, then $f_{\boldsymbol a_0}$ must have multiple roots.
We start considering the ``extreme'' case where $f'_{\boldsymbol
a_0}$ is the zero polynomial.
\begin{lemma}\label{lemma: f'=0}
The set $\mathcal{W}_1$ of  points $(\boldsymbol
a_0,\boldsymbol\alpha)\in\Gamma_{r}^*$ with $f'_{\boldsymbol a_0}=0$
is contained in a subvariety of codimension 2 of $\Gamma_r^*$.
\end{lemma}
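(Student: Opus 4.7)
The plan is to reduce the codimension estimate on $\Gamma_r^*$ to a codimension estimate on $V$, where hypothesis $({\sf H_4})$ is directly applicable. Since the condition $f'_{\boldsymbol a_0}\equiv 0$ only involves $\boldsymbol a_0$, I first observe that $\mathcal{W}_1=\Psi_r^{-1}(V_1)$, where $V_1:=\{\boldsymbol a_0\in V : f'_{\boldsymbol a_0}=0\}$ and $\Psi_r$ is the finite dominant morphism of \eqref{eq: morfismo finito - mean}.

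The core step is then to establish the inclusion $V_1\subset V\cap\mathcal{D}(V)\cap\mathcal{S}_1(V)$. Recall that $\mathrm{Disc}(f_{\boldsymbol a_0})$ and $\mathrm{Subdisc}(f_{\boldsymbol a_0})$ are, respectively, the resultant and the first subresultant of $f_{\boldsymbol a_0}$ and $f'_{\boldsymbol a_0}=\Delta^1F(\boldsymbol a_0,T,T)$, both expressible as determinants of suitable submatrices of the Sylvester matrix of $f_{\boldsymbol a_0}$ and $f'_{\boldsymbol a_0}$. If $f'_{\boldsymbol a_0}\equiv 0$, then the rows of this Sylvester matrix coming from $f'_{\boldsymbol a_0}$ are identically zero, which forces both determinants to vanish; hence $\boldsymbol a_0\in\mathcal{D}(V)\cap\mathcal{S}_1(V)$.

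By hypothesis $({\sf H_4})$, the set $V\cap\mathcal{D}(V)\cap\mathcal{S}_1(V)$ has codimension at least $2$ in $V$, so $\dim V_1\le\dim V-2=d-m-2$. By Lemma \ref{lemma: Psi_r is finite} the morphism $\Psi_r$ is finite and dominant, so applying the fact recalled in Section \ref{sec: notions of algebraic geometry}---that the preimage of an irreducible closed subset under a dominant finite morphism has pure dimension equal to that of the subset---to each irreducible component of $V_1$ yields $\dim\mathcal{W}_1\le d-m-2$. Since $\dim\Gamma_r^*=d-m$ by Lemma \ref{lemma: gamma j is set-theoretic complete intersection}, this shows that $\mathcal{W}_1$ is contained in a subvariety of $\Gamma_r^*$ of codimension at least $2$. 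The delicate point is the Sylvester-matrix argument for the inclusion $V_1\subset\mathcal{D}(V)\cap\mathcal{S}_1(V)$: one must handle carefully the degenerate situation in which an argument of the Sylvester matrix is identically zero, so that the standard degree hypothesis underlying the definitions of resultant and subresultant is not literally met, although the determinantal vanishing is immediate.
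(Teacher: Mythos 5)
Your proposal is correct and follows essentially the same route as the paper: you observe that $\mathcal{W}_1\subset\Psi_r^{-1}\bigl(\mathcal{D}(V)\cap\mathcal{S}_1(V)\bigr)$, invoke hypothesis $({\sf H_4})$ for the codimension-two bound in $V$, and then transfer it to $\Gamma_r^*$ via the finiteness of $\Psi_r$. The only difference is cosmetic: you spell out the Sylvester-matrix reason why $f'_{\boldsymbol a_0}\equiv 0$ forces both $\mathrm{Disc}(f_{\boldsymbol a_0})$ and $\mathrm{Subdisc}(f_{\boldsymbol a_0})$ to vanish, a step the paper leaves implicit.
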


\begin{proof}
The condition $f'_{\boldsymbol a_0}=0$ implies $\boldsymbol a_0$
belongs to both the discriminant locus $\mathcal{D}(V)$ and the
first subdiscriminant locus $\mathcal{S}_1(V)$, namely
$\mathcal{W}_1\subset \Psi_r^{-1}(\mathcal{D}(V)
\cap\mathcal{S}_1(V))$. Hypothesis $({\sf H_4})$ asserts that
$\mathcal{D}(V)\cap\mathcal{S}_1(V)$ has codimension two in $V$.
Therefore, taking into account that $\Psi_r$ is a finite morphism we
deduce that $\mathcal{W}_1$ has codimension two in $\Gamma_r^*$.
\end{proof}

In what follows we shall assume that $\boldsymbol a_0$ is a regular
point of $V$, $f'_{\boldsymbol a_0}$ is nonzero and $f_{\boldsymbol
a_0}$ has multiple roots. We analyze the case where exactly one of
the coordinates of $\boldsymbol{\alpha}$ is a multiple root of
$f_{\boldsymbol a_0}$.

\begin{lemma}\label{lemma: only one multiple root - mean}
Suppose that there exists a unique coordinate $\alpha_i$ of
$\boldsymbol{\alpha}$ which is a multiple root of $f_{\boldsymbol
a_0}$. Then $(\boldsymbol a_0,\boldsymbol{\alpha})$ is a regular
point of $\Gamma_r^{*}$.
\end{lemma}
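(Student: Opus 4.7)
The plan is to verify the Jacobian criterion of Lemma~\ref{lemma: jacobian_F full rank implies nonsingular} by showing that $J_{\boldsymbol G,\boldsymbol F}(\boldsymbol a_0,\boldsymbol\alpha)$ has full row rank $m+r$. From the block form displayed just before that lemma, the lower right block $(\partial F/\partial\boldsymbol T)(\boldsymbol a_0,\boldsymbol\alpha)$ is diagonal with $j$th entry $f_{\boldsymbol a_0}'(\alpha_j)$. By hypothesis this entry is nonzero for every $j\neq i$ and vanishes for $j=i$; combined with the fact that the upper right block is identically zero, this means the entire column of the Jacobian indexed by $T_i$ is zero.

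I would then exploit the $r-1$ surviving nonzero diagonal entries. Each column indexed by $T_j$ with $j\neq i$ has a single nonzero entry (at row $m+j$), so these columns contribute rank $r-1$ and can be used, via column operations, to clear every entry of $(\partial F/\partial\boldsymbol A_0)(\boldsymbol a_0,\boldsymbol\alpha)$ located in a row $m+j$ with $j\neq i$. After these reductions, the rank of $J_{\boldsymbol G,\boldsymbol F}(\boldsymbol a_0,\boldsymbol\alpha)$ equals $r-1$ plus the rank of the $(m+1)\times d$ submatrix $M$ whose first $m$ rows are $(\partial\boldsymbol G/\partial\boldsymbol A_0)(\boldsymbol a_0)$ and whose last row is $(\partial F/\partial\boldsymbol A_0)(\boldsymbol a_0,\alpha_i)=(\alpha_i^{d-1},\alpha_i^{d-2},\ldots,\alpha_i,1)$.

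It remains to prove $\mathrm{rank}(M)=m+1$. Hypotheses $({\sf H_1})$--$({\sf H_2})$ together with $\boldsymbol a_0\in V\setminus\mathrm{Sing}(V)$ give $\mathrm{rank}((\partial\boldsymbol G/\partial\boldsymbol A_0)(\boldsymbol a_0))=m$, and the decisive input is the reduction at the start of Section~\ref{sec: combinatorial preliminaries}, by which we assume $G_1,\ldots,G_m\in\fq[A_{d-1},\ldots,A_1]$: the $A_0$-column of the top $m$ rows of $M$ is then identically zero, whereas the last entry of the bottom row is $1$, so the last row is automatically independent of the first $m$, yielding $\mathrm{rank}(M)=m+1$. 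The only step that requires genuine care rather than routine calculation is the observation that the entire $T_i$-column of the Jacobian vanishes (not merely its diagonal entry), which is what reduces the rank question to a \emph{single} extra row beyond the Jacobian of $\boldsymbol G$; if two or more coordinates of $\boldsymbol\alpha$ were multiple roots, this clean reduction would fail and additional information on $V$ would be needed, a case presumably treated in the subsequent lemmas.
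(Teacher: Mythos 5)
Your proposal is correct and follows the paper's argument in its essentials: both verify the Jacobian criterion of Lemma~\ref{lemma: jacobian_F full rank implies nonsingular} using the facts that $f_{\boldsymbol a_0}'(\alpha_j)\neq 0$ for $j\neq i$, that $\partial F/\partial A_0\equiv 1$, and that $(\partial\boldsymbol G/\partial\boldsymbol A_0)(\boldsymbol a_0)$ has rank $m$ with zero $A_0$-column because $G_1,\ldots,G_m$ were reduced to lie in $\fq[A_{d-1},\ldots,A_1]$. The only difference is bookkeeping: the paper exhibits a block lower-triangular split with an $r\times(r+1)$ lower-right block whose determinant after dropping the $T_i$-column is $\prod_{j\neq i}f_{\boldsymbol a_0}'(\alpha_j)$, whereas you perform column reduction to isolate a rank-$(r-1)$ part and an $(m+1)\times d$ matrix $M$; these are equivalent linear-algebra presentations of the same rank count.
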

\begin{proof}
Assume without loss of generality that $\alpha_1$ is the only
multiple root of $f_{\boldsymbol a_0}$ among the coordinates of
$\boldsymbol{\alpha}$. According to Lemma \ref{lemma: jacobian_F
full rank implies nonsingular}, it suffices to show that the
Jacobian matrix $J_{\boldsymbol G,\boldsymbol F}(\boldsymbol
a_0,\boldsymbol{\alpha})$ has full rank. For this purpose, consider
the $r\times (r+1)$--submatrix $\partial F/\partial(A_0,\boldsymbol
T)(\boldsymbol a_0,\boldsymbol{\alpha})$ of $J_{\boldsymbol
G,\boldsymbol F}(\boldsymbol a_0,\boldsymbol{\alpha})$ consisting of
the entries of its last $r$ rows and its last $r+1$ columns:
$$\frac{\partial F}{\partial(A_0,\boldsymbol T)}(\boldsymbol a_0,\boldsymbol{\alpha}):=\left(\begin{array}{ccccccccc}
 1          &0      & 0& 0                      & \cdots & 0\\
 1          &0      & f_{\boldsymbol a_0}'(\alpha_2)& 0                      & \cdots & 0\\
\vdots  & \vdots  & 0   & \ddots       & \ddots   &\vdots \\
\vdots  &  \vdots & \vdots   &  \ddots      &  \ddots  &0 \\
 1 & 0 & 0 & \cdots& 0& f_{\boldsymbol a_0}'(\alpha_r)
\end{array}\right).
$$
Since by hypothesis $\alpha_i$ is a simple root of $f'_{\boldsymbol
a_0}$ for $i\ge 2$, we have $f'_{\boldsymbol a_0}(\alpha_i)\neq 0$
for $i\ge 2$, and thus $\partial F/\partial(A_0,\boldsymbol
T)(\boldsymbol a_0,\boldsymbol{\alpha})$ is of full rank $r$.

On the other hand, since the matrix $(\partial \boldsymbol
G/\partial{\boldsymbol A_0})(\boldsymbol a_0,\boldsymbol{\alpha})$
has rank $m$ and its last column is zero, denoting by $(\partial
\boldsymbol G/\partial{\boldsymbol A})(\boldsymbol
a_0,\boldsymbol{\alpha})$ the submatrix of $(\partial \boldsymbol
G/\partial{\boldsymbol A_0})(\boldsymbol a_0,\boldsymbol{\alpha})$
obtained by deleting its last column, we see that $J_{\boldsymbol
G,\boldsymbol F}(\boldsymbol a_0,\boldsymbol{\alpha})$ can be
expressed as the following block matrix:
$$J_{\boldsymbol G,\boldsymbol F}(\boldsymbol a_0,\boldsymbol{\alpha})
%\hat{J}_{F,\boldsymbol G}(\boldsymbol a_0,\boldsymbol{\alpha})
=\left(\begin{array}{cc}
\dfrac{\partial \boldsymbol G}{\partial \boldsymbol A}(\boldsymbol a_0,\boldsymbol{\alpha}) & \boldsymbol0\\
&\\
\ast& \dfrac{\partial F}{\partial(A_0,\boldsymbol T)}(\boldsymbol
a_0,\boldsymbol{\alpha})
\end{array}\right).
$$
Since both $(\partial \boldsymbol G/\partial \boldsymbol
A)(\boldsymbol a_0,\boldsymbol{\alpha})$ and $(\partial
F/\partial(A_0,\boldsymbol T))(\boldsymbol a_0,\boldsymbol{\alpha})$
have full rank, we conclude that $J_{\boldsymbol G,\boldsymbol
F}(\boldsymbol a_0,\boldsymbol{\alpha})$ has rank $m+r$.
\end{proof}
Now we analyze the case where two distinct multiple roots of
$f_{\boldsymbol a_0}$ occur among the coordinates of
$\boldsymbol\alpha$.
\begin{lemma} \label{lemma: two distinct multiple roots - mean}
Let $\mathcal{W}_2$ be the set of points $(\boldsymbol
a_0,\boldsymbol{\alpha})\in \Gamma_{r}^{*}$ for which there exist
$1\le i<j\le r$ such that $\alpha_i\not=\alpha_j$ and
$\alpha_i,\alpha_j$ are multiple roots of $f_{\boldsymbol a_0}$.
Then $\mathcal{W}_2$ is contained in a subvariety of codimension 2
of $\Gamma_{r}^{*}$.
\end{lemma}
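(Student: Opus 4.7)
The plan is to show $\mathcal{W}_2 \subseteq \Psi_r^{-1}(V\cap\mathcal{D}(V)\cap\mathcal{S}_1(V))$ and then to apply hypothesis $({\sf H_4})$ together with the finiteness of the morphism $\Psi_r$ (Lemma \ref{lemma: Psi_r is finite}) to conclude that $\mathcal{W}_2$ is contained in a subvariety of codimension $2$ in $\Gamma_r^*$. The underlying idea is that the existence of two distinct multiple roots of $f_{\boldsymbol a_0}$ among the coordinates of $\boldsymbol\alpha$ forces $\gcd(f_{\boldsymbol a_0}, f_{\boldsymbol a_0}')$ to have degree at least $2$, which is exactly the condition captured by the simultaneous vanishing of the discriminant and the first subdiscriminant.

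For the inclusion, I would take $(\boldsymbol a_0,\boldsymbol\alpha)\in\mathcal{W}_2$ and pick $1\le i<j\le r$ with $\alpha_i\ne \alpha_j$ such that both $\alpha_i$ and $\alpha_j$ are multiple roots of $f_{\boldsymbol a_0}$. Then $(T-\alpha_i)(T-\alpha_j)$ divides $\gcd(f_{\boldsymbol a_0},f_{\boldsymbol a_0}')$, so this gcd has degree at least $2$. By the standard subresultant characterization of the degree of a gcd (the gcd has degree $\ge k+1$ if and only if the subresultants of indices $0,\ldots,k$ all vanish), it follows that $\mathrm{Res}(f_{\boldsymbol a_0},f_{\boldsymbol a_0}')=\mathrm{Subres}(f_{\boldsymbol a_0},f_{\boldsymbol a_0}')=0$. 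Since $\Delta^1 F(\boldsymbol A_0,T,T)$ specialized at $\boldsymbol A_0=\boldsymbol a_0$ agrees with $f_{\boldsymbol a_0}'$, the paper's $\mathrm{Disc}$ and $\mathrm{Subdisc}$ coincide with these classical invariants, and we obtain $\boldsymbol a_0 \in V\cap \mathcal{D}(V)\cap\mathcal{S}_1(V)$, proving the desired inclusion.

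By hypothesis $({\sf H_4})$, the intersection $V\cap \mathcal{D}(V) \cap \mathcal{S}_1(V)$ has codimension $2$ in $V$. Since $\Psi_r:\Gamma_r^*\to V$ is finite and dominant, and both $\Gamma_r^*$ and $V$ have pure dimension $d-m$, the preimage of a closed subvariety of $V$ of pure dimension $k$ is of pure dimension $k$ in $\Gamma_r^*$ (see Section \ref{sec: notions of algebraic geometry}). Therefore $\Psi_r^{-1}(V\cap\mathcal{D}(V)\cap\mathcal{S}_1(V))$ has codimension $2$ in $\Gamma_r^*$, and since it contains $\mathcal{W}_2$ this finishes the argument.

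The only delicate point is invoking the subresultant characterization cleanly and justifying that the paper's ad hoc definitions of $\mathrm{Disc}$ and $\mathrm{Subdisc}$ via $\Delta^1 F(\boldsymbol A_0, T, T)$ agree with the classical notions; but the identity $\Delta^1 F(\boldsymbol A_0, T, T)=(\partial F/\partial T)(\boldsymbol A_0,T)$ makes this transparent, so the real content of the lemma is simply a combination of hypothesis $({\sf H_4})$ with the finiteness of $\Psi_r$ established earlier.
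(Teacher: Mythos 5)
Your proposal is correct and follows essentially the same route as the paper: both reduce the lemma to the inclusion $\mathcal{W}_2 \subseteq \Psi_r^{-1}(\mathcal{D}(V)\cap\mathcal{S}_1(V))$ via the observation that two distinct multiple roots force $\deg\gcd(f_{\boldsymbol a_0},f_{\boldsymbol a_0}')\ge 2$, and then apply $({\sf H_4})$ together with the finiteness of $\Psi_r$. The extra detail you supply about the subresultant characterization of gcd degree and the identity $\Delta^1F(\boldsymbol A_0,T,T)=(\partial F/\partial T)(\boldsymbol A_0,T)$ is a correct justification of steps the paper leaves implicit.
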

\begin{proof}
Let $(\boldsymbol a_0,\boldsymbol{\alpha})$ be an arbitrary point of
$\mathcal{W}_2$. Since $f_{\boldsymbol a_0}$ has at least two
distinct multiple roots, the greatest common divisor of
$f_{\boldsymbol a_0}$ and $f'_{\boldsymbol a_0}$ has degree at least
$2$. This implies that
$$\mathrm{Disc}(f_{\boldsymbol a_0})=
\mathrm{Subdisc}(f_{\boldsymbol a_0})=0.$$
As a consequence, $\mathcal{W}_2\subset\Psi_r^{-1}(\mathcal{Z})$,
where $\Psi_r$ is the morphism of (\ref{eq: morfismo finito -
mean}), $\mathcal{Z}:=\mathcal{D}(V)\cap\mathcal{S}_1(V)$ and
$\mathcal{D}(V)$ and $\mathcal{S}_1(V)$ are the discriminant locus
and the first subdiscriminant locus of $V$. Hypothesis $({\sf H_4})$
proves that $\mathcal{Z}$ has codimension two in $V$. It follows
that $\dim \mathcal{Z}=d-m-2$, and hence $\dim
\Psi_r^{-1}(\mathcal{Z})=d-m-2$. The statement of the lemma follows.
\end{proof}

Next we consider the case where only one multiple root of
$f_{\boldsymbol a_0}$ occurs among the coordinates of
$\boldsymbol{\alpha}$, and at least two distinct coordinates of
$\boldsymbol{\alpha}$ take this value. Then we have either that all
the remaining coordinates of $\boldsymbol{\alpha}$ are simple roots
of $f_{\boldsymbol a_0}$, or there exists at least a third
coordinate whose value is the same multiple root. We now deal with
the first of these two cases.
\begin{lemma}\label{lemma: repeated multiple root I - mean} Let
$(\boldsymbol a_0, \boldsymbol\alpha)\in \Gamma_{r}^*$ be a point
satisfying the conditions:
\begin{itemize}
\item there exist $1\le i<j\le r$ such that $\alpha_i=\alpha_j$ and
$\alpha_i$ is a multiple root of $f_{\boldsymbol a_0}$;
\item for any $k\notin\{i,j\}$, $\alpha_k$ is a simple root of
$f_{\boldsymbol a_0}$.
\end{itemize}
Then either $(\boldsymbol a_0, \boldsymbol\alpha)$ is regular point
of $\Gamma_{r}^*$ or it is contained in a subvariety $\mathcal{W}_3$
of codimension two of $\Gamma_r^*$.
\end{lemma}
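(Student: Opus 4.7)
Write $\alpha:=\alpha_i=\alpha_j$ and split according to the multiplicity of $\alpha$ as a root of $f_{\boldsymbol a_0}$. The Jacobian $J_{\boldsymbol G,\boldsymbol F}$ of Lemma \ref{lemma: jacobian_F full rank implies nonsingular} is unsuitable here: at $(\boldsymbol a_0,\boldsymbol\alpha)$ its rows corresponding to $F(\boldsymbol A_0, T_i)$ and $F(\boldsymbol A_0, T_j)$ coincide, since both $\boldsymbol T$-parts vanish (as $f'_{\boldsymbol a_0}(\alpha)=0$) and both $\boldsymbol A_0$-parts equal $(\alpha^{d-1},\ldots,\alpha,1)$. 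I would therefore replace the equation $F(\boldsymbol A_0,T_j)$ by the divided difference $\Delta^1 F(\boldsymbol A_0,T_i,T_j)$ and apply the argument of Lemma \ref{lemma: jacobian_F full rank implies nonsingular} to the $m+r$ polynomials
\[
G_1,\ldots,G_m,\ F(\boldsymbol A_0,T_k)\ (k\ne j),\ \Delta^1 F(\boldsymbol A_0,T_i,T_j),
\]
all of which vanish on $\Gamma_r^*$. The first two families vanish by (the proof of) Lemma \ref{lemma: jacobian_F full rank implies nonsingular}; for the last, at any point of $\Gamma_r^*$ one has $f_{\boldsymbol a_0}(\alpha_i)=f_{\boldsymbol a_0}(\alpha_j)=0$, while the defining divided-difference conditions force the Hermite relation $f'_{\boldsymbol a_0}(\alpha_i)=0$ whenever $\alpha_i=\alpha_j$, so $\Delta^1 F(\boldsymbol a_0,\alpha_i,\alpha_j)$ vanishes in either case.

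\textbf{Case A: $f''_{\boldsymbol a_0}(\alpha)\ne 0$.} I claim that $(\boldsymbol a_0,\boldsymbol\alpha)$ is a regular point of $\Gamma_r^*$. A direct Taylor computation from the definition of $\Delta^1 F$ yields
\[
\frac{\partial \Delta^1 F}{\partial A_k}\bigg|_{T_1=T_2=\alpha}=k\alpha^{k-1},\qquad
\frac{\partial \Delta^1 F}{\partial T_1}\bigg|_{T_1=T_2=\alpha}=\frac{\partial \Delta^1 F}{\partial T_2}\bigg|_{T_1=T_2=\alpha}=\tfrac{1}{2}f''_{\boldsymbol a_0}(\alpha).
\]
I would then exhibit a nonzero $(m+r)\times(m+r)$ minor of the new Jacobian by choosing $m$ columns of $\boldsymbol A$ on which $\partial \boldsymbol G/\partial\boldsymbol A$ has a nonzero minor $M_G$ (available since $\boldsymbol a_0\in V\setminus\mathrm{Sing}(V)$), together with the $A_0$-column, the $T_j$-column, and the $T_k$-columns for $k\notin\{i,j\}$. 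In the $T_j$-column only the $\Delta^1 F$ row is nonzero (with value $f''_{\boldsymbol a_0}(\alpha)/2$); expanding along that column and then performing the row operations $\mathrm{row}(F(T_k))\leftarrow\mathrm{row}(F(T_k))-\mathrm{row}(F(T_i))$ for $k\notin\{i,j\}$ reduces the remaining minor to block lower-triangular form with diagonal blocks $M_G$, $(1)$ (from $\partial F(T_i)/\partial A_0$), and $\mathrm{diag}\bigl(f'_{\boldsymbol a_0}(\alpha_k)\bigr)_{k\ne i,j}$. The resulting determinant is $\pm\tfrac{1}{2}f''_{\boldsymbol a_0}(\alpha)\det(M_G)\prod_{k\ne i,j} f'_{\boldsymbol a_0}(\alpha_k)$, which is nonzero by Case A's hypothesis and the fact that each $\alpha_k$ with $k\ne i,j$ is a simple root. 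Hence the Jacobian has rank $m+r$, which forces the tangent space to $\Gamma_r^*$ at $(\boldsymbol a_0,\boldsymbol\alpha)$ to have dimension exactly $d-m=\dim\Gamma_r^*$, and the point is regular.

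\textbf{Case B: $f''_{\boldsymbol a_0}(\alpha)=0$.} Then $\gcd(f_{\boldsymbol a_0},f'_{\boldsymbol a_0})$ has degree at least two, so both $\mathrm{Disc}(f_{\boldsymbol a_0})$ and $\mathrm{Subdisc}(f_{\boldsymbol a_0})$ vanish, i.e., $\boldsymbol a_0\in\mathcal{D}(V)\cap\mathcal{S}_1(V)$. Set $\mathcal{W}_3:=\Psi_r^{-1}\bigl(\mathcal{D}(V)\cap\mathcal{S}_1(V)\bigr)$. Hypothesis $({\sf H_4})$ gives $\dim\bigl(\mathcal{D}(V)\cap\mathcal{S}_1(V)\bigr)=d-m-2$, and finiteness of $\Psi_r$ (Lemma \ref{lemma: Psi_r is finite}) propagates this dimension, so $\mathcal{W}_3$ has codimension two in $\Gamma_r^*$. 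The main obstacle will be the Jacobian computation in Case A: correctly identifying the partial derivatives of $\Delta^1 F$ at the diagonal $T_1=T_2=\alpha$ and verifying that the chosen submatrix collapses, via the indicated row operations, to the stated block lower-triangular form.
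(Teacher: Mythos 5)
Your proof follows the same strategy as the paper: replace the degenerate pair of rows $F(\boldsymbol A_0,T_i)$, $F(\boldsymbol A_0,T_j)$ in the Jacobian of Lemma \ref{lemma: jacobian_F full rank implies nonsingular} by $\Delta^1 F(\boldsymbol A_0,T_i,T_j)$ (the paper keeps $F(\boldsymbol A_0,T_j)$ and drops $F(\boldsymbol A_0,T_i)$, but at a point with $\alpha_i=\alpha_j$ this is the same row), compute the partials of $\Delta^1F$ on the diagonal $T_i=T_j$, and split on whether the relevant entry is nonzero. Your minor computation and block reduction are correct; note that your observation $\lambda_1=\lambda_2$ (forced by the symmetry of $\Delta^1F$ in $T_1,T_2$) is in fact right, whereas the paper writes $\lambda_1=-\lambda_2$ — a harmless sign slip since only $\lambda_2\neq0$ is used.

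One point worth fixing: you express the threshold quantity as $\tfrac{1}{2}f''_{\boldsymbol a_0}(\alpha)$, while the paper uses the confluent divided difference $\Delta^2 f_{\boldsymbol a_0}(\alpha_1,\alpha_1)=\sum_{j\ge 2}a_j\binom{j}{2}\alpha_1^{j-2}$. These agree up to the unit $2$ when $p\neq 2$, but the paper claims to work in arbitrary characteristic, and in characteristic $2$ one has $f''\equiv 0$ identically while $\Delta^2 f_{\boldsymbol a_0}$ is not. With your formulation, in characteristic $2$ Case A is vacuous and everything must fall under Case B; the asserted implication there (``$f''_{\boldsymbol a_0}(\alpha)=0$, so $\gcd(f_{\boldsymbol a_0},f'_{\boldsymbol a_0})$ has degree at least two'') is actually still true in characteristic $2$, but for a different reason than the implicit ``multiplicity $\ge3$'' argument — when $p=2$, a double root $\alpha$ already forces $(T-\alpha)^2\mid f'_{\boldsymbol a_0}$ — and you don't supply it. Replacing $\tfrac{1}{2}f''_{\boldsymbol a_0}(\alpha)$ by $\Delta^2 f_{\boldsymbol a_0}(\alpha_1,\alpha_1)$ throughout makes the case split nondegenerate and the Case B justification uniform: vanishing of the first three divided differences at $\alpha$ is equivalent to multiplicity $\geq 3$ in every characteristic.
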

\begin{proof}
We may assume without loss of generality that $i=1$ and $j=2$.
Observe that $\Delta^1F(\boldsymbol A_0,T_1,T_2)$ and $F(\boldsymbol
A_0,T_i)$ $(2\le i\le r)$ vanish on $\Gamma_r^*$. Therefore, the
tangent space $\mathcal{T}_{(\boldsymbol
a_0,\boldsymbol\alpha)}\Gamma_r^*$ of $\Gamma_r^*$ at $(\boldsymbol
a_0,\boldsymbol\alpha)$ is included in the kernel of the Jacobian
matrix $J_{\boldsymbol G,\Delta^1,\boldsymbol F^*}(\boldsymbol
a_0,\boldsymbol\alpha)$ of $\boldsymbol G$, $\Delta^1 F(\boldsymbol
A_0,T_1,T_2)$ and $F(\boldsymbol A_0,T_i)$ $(2\le i\le r)$ with
respect to $\boldsymbol A_0,\boldsymbol{T}$. We claim that either
$J_{\boldsymbol G,\Delta^1,\boldsymbol F^*}(\boldsymbol
a_0,\boldsymbol\alpha)$ has rank $r+m$, or $(\boldsymbol
a_0,\boldsymbol \alpha)$ is contained in a subvariety of codimension
two of $\Gamma_r^*$.

Now we prove the claim. We may express $J_{\boldsymbol
G,\Delta^1,\boldsymbol F^*}(\boldsymbol a_0,\boldsymbol\alpha)$ as
$$
J_{\boldsymbol G,\Delta^1,\boldsymbol F^*}
(\boldsymbol a_0,\boldsymbol\alpha)=
\left(\begin{array}{cc} \dfrac{\partial \boldsymbol G}{\partial
\boldsymbol A}
(\boldsymbol a_0,\boldsymbol{\alpha}) & \boldsymbol0\\
&\\ \ast& \dfrac{\partial (\Delta^1,\boldsymbol
F^*)}{\partial(A_0,\boldsymbol T)}(\boldsymbol
a_0,\boldsymbol\alpha)
\end{array}\right),
$$
where $(\partial \boldsymbol G/ \partial \boldsymbol A)(\boldsymbol
a_0,\boldsymbol{\alpha})\in \fq^{m\times (d-1)}$ is the Jacobian
matrix of $\boldsymbol G$ with respect to $\boldsymbol A$ and
$(\partial (\Delta^1,\boldsymbol F^*)/\partial(A_0,\boldsymbol
T))(\boldsymbol a_0,\boldsymbol\alpha)\in \fq{}^{r\times (r+1)}$ is
the Jacobian matrix of $\Delta^1 F(\boldsymbol A_0,T_1,T_2)$ and
$F(\boldsymbol A_0,T_i)$ $(2\le i\le r)$ with respect to
$A_0,\boldsymbol{T}$:
$$\frac{\partial
(\Delta^1,\boldsymbol F^*)}{\partial(A_0,\boldsymbol T)}(\boldsymbol
a_0,\boldsymbol\alpha):=\left(\begin{array}{ccccccccc}
0& \lambda_1  & \lambda_2 & 0                    & \cdots & 0\\
1& 0 & 0 & 0                     & \cdots & 0\\
1 & 0 & 0  & f_{\boldsymbol a_0}'(\alpha_3)       & \ddots   &\vdots \\
\vdots   &  \vdots  &  \vdots  &  \vdots  &  \ddots  &0 \\
1 &0 & 0&  0&\cdots & f_{\boldsymbol a_0}'(\alpha_r)
\end{array}\right).
$$

Now we determine $\lambda_i:=(\partial \Delta^1F(\boldsymbol
A_0,T_1,T_2)/\partial T_i)(\boldsymbol a_0,\boldsymbol\alpha)$ for
$i=1,2$. Observe that
$$\frac{\partial}{\partial T_2}\Bigg(\frac{T_2^j-T_1^j}{T_2-T_1}\Bigg)=
\frac{jT_2^{j-1}(T_2-T_1)-(T_2^j-T_1^j)}{(T_2-T_1)^2}=\sum_{k=2}^j
\binom{j}{k}T_2^{j-k}(T_1-T_2)^{k-2}.$$
It follows that
$$\lambda_2:=\frac{\partial \Delta^1F(\boldsymbol
A_0,T_1,T_2)}{\partial T_2}(\boldsymbol a_0,\boldsymbol\alpha)=
\sum_{j=2}^da_j\frac{j(j-1)}{2}\alpha_1^{j-2}=\Delta^2f_{\boldsymbol
a_0}(\alpha_1,\alpha_1).$$
Furthermore, it is easy to see that $\lambda_1=-\lambda_2$.

Recall that $\alpha_i$ is a simple root of $f_{\boldsymbol a_0}$ for
$i\ge 3$, which implies $f_{\boldsymbol a_0}'(\alpha_i)\not=0$ for
$i\ge 3$. If $\Delta^2f_{\boldsymbol a_0}(\alpha_1,\alpha_1)\not=0$,
then there exists an $(r\times r)$--submatrix of $(\partial
(\Delta^1,\boldsymbol F^*)/\partial(A_0,\boldsymbol T)) (\boldsymbol
a_0,\boldsymbol\alpha)$ with rank $r$. Thus, $J_{\boldsymbol
G,\Delta^1,\boldsymbol F^*}(\boldsymbol a_0,\boldsymbol\alpha)$ has
rank $r+m$, namely the first assertion of the claim holds. It
follows that the kernel of $J_{\boldsymbol G,\Delta^1,\boldsymbol
F^*}(\boldsymbol a_0,\boldsymbol\alpha)$ has dimension $d-m$. This
implies that $\dim \mathcal{T}_{(\boldsymbol a_0,
\boldsymbol\alpha)}\Gamma_r^* \le d-m$, which proves that
$(\boldsymbol a_0, \boldsymbol\alpha)$ is regular point of
$\Gamma_r^*$.

On the other hand, for a point $(\boldsymbol a_0,
\boldsymbol\alpha)\in \Gamma_{r}^*$ as in the statement of the lemma
with $\Delta^2f_{\boldsymbol a_0}(\alpha_1,\alpha_1)=0$, we have
that $\alpha_1$ is root of multiplicity at least three of
$f_{\boldsymbol a_0}$. As a consequence, the set $\mathcal{W}_3$ of
such points is contained in
$\Psi_r^{-1}\big(\mathcal{D}(V)\cap\mathcal{S}_1(V)\big)$. The lemma
follows arguing as in the proof of Lemma \ref{lemma: two distinct
multiple roots - mean}.
\end{proof}

Finally, we analyze the set of points of $\Gamma_r^*$ such that the
value of at least three distinct coordinates of
$\boldsymbol{\alpha}$ is the same multiple root of $f_{\boldsymbol
a_0}$.
\begin{lemma}\label{lemma: repeated multiple root II - mean}
Let $\mathcal{W}_4\subset\Gamma_{r}^*$ be the set of points
$(\boldsymbol a_0, \boldsymbol\alpha)$ for which there exist $1\le
i<j<k\le r$ such that $\alpha_i=\alpha_j=\alpha_k$ and $\alpha_i$ is
a multiple root of $f_{\boldsymbol a_0}$. Then $\mathcal{W}_4$ is
contained in a subvariety of codimension $2$ in $\Gamma_r^*$.
\end{lemma}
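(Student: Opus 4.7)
The plan is to follow the pattern of Lemma \ref{lemma: two distinct multiple roots - mean}: I will show that $\mathcal{W}_4\subset\Psi_r^{-1}(\mathcal{D}(V)\cap\mathcal{S}_1(V))$ and deduce the claim from hypothesis (${\sf H_4}$) together with the finiteness of $\Psi_r$ given by Lemma \ref{lemma: Psi_r is finite}. The crux is to prove that for any $(\boldsymbol{a}_0,\boldsymbol{\alpha})\in\mathcal{W}_4$ with $\beta:=\alpha_i=\alpha_j=\alpha_k$, the element $\beta$ is a root of $f_{\boldsymbol{a}_0}$ of multiplicity at least $3$---sharper than what the definition of $\mathcal{W}_4$ directly provides.

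To establish this sharpening, I would invoke the Newton form of Hermite interpolation. The polynomial
$$p(T):=\sum_{s=1}^{k}\Delta^{s-1}F(\boldsymbol{a}_0,\alpha_1,\ldots,\alpha_s)\prod_{\ell=1}^{s-1}(T-\alpha_\ell)$$
is the unique polynomial of degree less than $k$ that interpolates $f_{\boldsymbol{a}_0}$ at the multi--set $\{\alpha_1,\ldots,\alpha_k\}$, matching the Hasse derivatives of $f_{\boldsymbol{a}_0}$ at each distinct node up to the order prescribed by its multiplicity of occurrence in the list. Since every coefficient $\Delta^{s-1}F(\boldsymbol{a}_0,\alpha_1,\ldots,\alpha_s)$ vanishes on $\Gamma_r^*$, we get $p\equiv 0$; hence $f_{\boldsymbol{a}_0}=f_{\boldsymbol{a}_0}-p$ has a zero of multiplicity at least as large as the number of occurrences of each node in $\alpha_1,\ldots,\alpha_k$. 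In particular $\beta$, appearing at positions $i$, $j$ and $k$, is a root of $f_{\boldsymbol{a}_0}$ of multiplicity at least $3$, which forces $(T-\beta)^2\mid\gcd(f_{\boldsymbol{a}_0},f'_{\boldsymbol{a}_0})$ in any characteristic. Consequently both $\mathrm{Disc}(f_{\boldsymbol{a}_0})$ and $\mathrm{Subdisc}(f_{\boldsymbol{a}_0})$ vanish, so $\boldsymbol{a}_0\in\mathcal{D}(V)\cap\mathcal{S}_1(V)$.

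The inclusion $\mathcal{W}_4\subset\Psi_r^{-1}(\mathcal{D}(V)\cap\mathcal{S}_1(V))$ being established, the proof concludes exactly as in Lemma \ref{lemma: two distinct multiple roots - mean}: by hypothesis (${\sf H_4}$) we have $\dim(\mathcal{D}(V)\cap\mathcal{S}_1(V))=d-m-2$, and since $\Psi_r$ is a finite dominant morphism the preimage of each irreducible component has pure dimension $d-m-2$, so $\mathcal{W}_4$ is contained in a subvariety of codimension two in $\Gamma_r^*$. The main technical point I expect to have to verify is the Hermite--interpolation interpretation of $p(T)$ in arbitrary characteristic: one must work with Hasse derivatives in place of formal derivatives at coinciding nodes. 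This is a purely algebraic identity provable by induction on the number of nodes, so it causes no real obstruction, only some care with the definitions.
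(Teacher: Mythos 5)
Your proof is correct and follows essentially the same approach as the paper: show that the repeated coordinate $\beta$ is a root of $f_{\boldsymbol{a}_0}$ of multiplicity at least $3$, conclude $\boldsymbol{a}_0\in\mathcal{D}(V)\cap\mathcal{S}_1(V)$, and apply hypothesis $({\sf H_4})$ together with the finiteness of $\Psi_r$. The only difference is in how multiplicity $\ge 3$ is established: the paper reorders coordinates so that $\alpha_1=\alpha_2=\alpha_3$ and reads the multiplicity off the vanishing of $\Delta^0F$, $\Delta^1F$, $\Delta^2F$ at the triple point, while your Newton--form remainder argument works with the original indices $i<j<k$ and thereby sidesteps the permutation invariance of $\Gamma_r^*$ that the paper's ``without loss of generality'' quietly relies on.
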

\begin{proof}
Let $(\boldsymbol a_0, \boldsymbol\alpha)$ be an arbitrary point of
$\mathcal{W}_4$. Without loss of generality we may assume that
$\alpha_1=\alpha_2=\alpha_3$ is the multiple root of $f_{\boldsymbol
a_0}$ of the statement of the lemma. Since $(\boldsymbol a_0,
\boldsymbol\alpha)$ satisfies the equations
$$
F(\boldsymbol A_0,T_1)=\Delta F(\boldsymbol A_0,T_1,T_2)=
\Delta^{2}F(\boldsymbol A_0,T_1,T_2,T_3)=0,
$$
we see that $\alpha_1$ is a common root of $f_{\boldsymbol a_0}$,
$\Delta F(\boldsymbol a_0,T,T)$ and $\Delta^2F(\boldsymbol
a_0,T,T,T)$. It follows that $\alpha_1$ is a root of multiplicity at
least 3 of $f_{\boldsymbol a_0}$, and thus the greatest common
divisor of $f_{\boldsymbol a_0}$ and $f'_{\boldsymbol a_0}$ has
degree at least $2$. As a consequence, the proof follows by the
arguments of the proof of Lemma \ref{lemma: two distinct multiple
roots - mean}.
\end{proof}

Now we can prove the main result of this section. According to
Lemmas \ref{lemma: f'=0}, \ref{lemma: only one multiple root -
mean}, \ref{lemma: two distinct multiple roots - mean}, \ref{lemma:
repeated multiple root I - mean} and \ref{lemma: repeated multiple
root II - mean}, the set of singular points of $\Gamma_r^*$ is
contained in the set $\mathcal{W}_1\cup
\mathcal{W}_2\cup\mathcal{W}_3\cup\mathcal{W}_4$, where
$\mathcal{W}_1$, $\mathcal{W}_2$, $\mathcal{W}_3$ and
$\mathcal{W}_4$ are defined in the statement of Lemmas \ref{lemma:
f'=0}, \ref{lemma: two distinct multiple roots - mean}, \ref{lemma:
repeated multiple root I - mean} and \ref{lemma: repeated multiple
root II - mean}. Since each set $\mathcal{W}_i$ is contained in a
subvariety of codimension $2$ of $\Gamma_r^*$, we obtain the
following result.
\begin{theorem}\label{th: codimension singular locus Gamma j}
Let $q>d\ge m+2$. The singular locus of $\Gamma_r^*$ has codimension
at least $2$ in $\Gamma_r^*$.
\end{theorem}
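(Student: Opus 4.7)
The plan is to carry out an exhaustive case analysis on the points of $\Gamma_r^*$, using Lemma \ref{lemma: jacobian_F full rank implies nonsingular} as the basic criterion: since $\Gamma_r^*$ has pure dimension $d-m$ by Lemma \ref{lemma: gamma j is set-theoretic complete intersection}, any $(\boldsymbol{a}_0,\boldsymbol\alpha)\in\Gamma_r^*$ at which the Jacobian $J_{\boldsymbol{G},\boldsymbol{F}}$ attains its maximal rank $m+r$ is automatically a regular point. My aim is to show that the complementary set, together with the points where $\boldsymbol{a}_0$ is singular in $V$, is contained in finitely many subvarieties of $\Gamma_r^*$ of codimension at least $2$.

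First I would peel off, via Corollary \ref{coro: incidence var singular points V in codim 2}, the set $\mathcal{W}_0$ of points with $\boldsymbol{a}_0\in\mathrm{Sing}(V)$. From then on I assume $\boldsymbol{a}_0$ is a regular point of $V$, so that $(\partial\boldsymbol{G}/\partial\boldsymbol{A}_0)(\boldsymbol{a}_0)$ has rank $m$; by the block structure of $J_{\boldsymbol{G},\boldsymbol{F}}(\boldsymbol{a}_0,\boldsymbol\alpha)$ and the diagonal shape of $(\partial F/\partial\boldsymbol{T})(\boldsymbol{a}_0,\boldsymbol\alpha)=\mathrm{diag}\bigl(f'_{\boldsymbol{a}_0}(\alpha_i)\bigr)$, every such point whose $\alpha_i$ are all simple roots of $f_{\boldsymbol{a}_0}$ is regular. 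The remaining situations are exactly those handled in the preceding lemmas: $f'_{\boldsymbol{a}_0}\equiv 0$ (Lemma \ref{lemma: f'=0}, yielding $\mathcal{W}_1$); two distinct multiple roots of $f_{\boldsymbol{a}_0}$ appearing among the $\alpha_i$ (Lemma \ref{lemma: two distinct multiple roots - mean}, yielding $\mathcal{W}_2$); exactly one $\alpha_i$ a multiple root, all others simple (Lemma \ref{lemma: only one multiple root - mean}, forcing regularity); two coordinates $\alpha_i=\alpha_j$ equal to a multiple root with the rest simple (Lemma \ref{lemma: repeated multiple root I - mean}, yielding regularity or $\mathcal{W}_3$); and three equal coordinates coinciding with a multiple root (Lemma \ref{lemma: repeated multiple root II - mean}, yielding $\mathcal{W}_4$).

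The closing step is a simple tally: the singular locus of $\Gamma_r^*$ is contained in the finite union $\mathcal{W}_0\cup\mathcal{W}_1\cup\mathcal{W}_2\cup\mathcal{W}_3\cup\mathcal{W}_4$, and each $\mathcal{W}_j$ lies in a subvariety of codimension at least $2$ of $\Gamma_r^*$, so the union does too. The main conceptual obstacle, already dispatched in the preparatory lemmas, is Lemma \ref{lemma: repeated multiple root I - mean}: when $\alpha_1=\alpha_2$ is a double root of $f_{\boldsymbol{a}_0}$ and every other $\alpha_k$ is simple, the vanishing of $f'_{\boldsymbol{a}_0}$ at $\alpha_1=\alpha_2$ prevents one from reading regularity directly off the diagonal block $\partial F/\partial\boldsymbol{T}$; one must instead replace $F(\boldsymbol{A}_0,T_1),F(\boldsymbol{A}_0,T_2)$ in the local defining system by $F(\boldsymbol{A}_0,T_1),\Delta^1F(\boldsymbol{A}_0,T_1,T_2)$ and identify the crucial derivative $\partial\Delta^1F/\partial T_2$ at the point with $\Delta^2 f_{\boldsymbol{a}_0}(\alpha_1,\alpha_1)$, whose nonvanishing precisely characterizes $\alpha_1$ as a root of multiplicity exactly two and thus secures the required full rank.
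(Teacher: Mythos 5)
Your proof is correct and follows essentially the same route as the paper: both assemble the preceding lemmas into the union $\mathcal{W}_0\cup\mathcal{W}_1\cup\mathcal{W}_2\cup\mathcal{W}_3\cup\mathcal{W}_4$ after checking via Lemma~\ref{lemma: jacobian_F full rank implies nonsingular} and the block structure of $J_{\boldsymbol{G},\boldsymbol{F}}$ that the remaining points are regular. Your explicit inclusion of $\mathcal{W}_0$ in the final tally is in fact slightly more careful than the paper's wording, which folds it in implicitly via Corollary~\ref{coro: incidence var singular points V in codim 2}.
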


In the next result we deduce important consequences of Theorem
\ref{th: codimension singular locus Gamma j}.
\begin{corollary}\label{coro: J is radical}
With assumptions as in Theorem \ref{th: codimension singular locus
Gamma j}, the ideal $J\subset \fq[\boldsymbol A_0,\boldsymbol{T}]$
generated by $G_1,\ldots,G_m$ and $\Delta^{i-1}F(\boldsymbol
A_0,T_1,\ldots,T_i)$ $(1\leq i\leq r)$ is radical. Moreover,
$\Gamma_r^{*}$ is an ideal--theoretic complete intersection of
dimension $d-m$.
\end{corollary}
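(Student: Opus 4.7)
My plan is to deduce both assertions from Serre's criterion for reducedness, leveraging the two main inputs already available: Lemma \ref{lemma: gamma j is set-theoretic complete intersection} (which provides a regular sequence) and Theorem \ref{th: codimension singular locus Gamma j} (which controls the singular locus).

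First, by Lemma \ref{lemma: gamma j is set-theoretic complete intersection}, the polynomials $G_1,\ldots,G_m$ and $\Delta^{i-1}F(\boldsymbol A_0,T_1,\ldots,T_i)$ $(1\le i\le r)$ form a regular sequence in $\fq[\boldsymbol A_0,\boldsymbol T]$. I would therefore observe that the coordinate ring $B:=\fq[\boldsymbol A_0,\boldsymbol T]/J$ is Cohen--Macaulay, since a quotient of a polynomial ring by an ideal generated by a regular sequence is always so. In particular, $B$ satisfies Serre's condition $S_1$: the zero ideal of $B$ has no embedded associated primes. Next, Theorem \ref{th: codimension singular locus Gamma j} translates directly into the statement that $B$ satisfies Serre's condition $R_1$: for every prime $\mathfrak{p}$ of $B$ with $\mathrm{ht}(\mathfrak{p})\le 1$, the localization $B_{\mathfrak{p}}$ is a regular local ring. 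In particular, $R_0$ holds.

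With both $R_0$ and $S_1$ in hand, Serre's criterion (see, e.g., Matsumura, \emph{Commutative Ring Theory}, Theorem 23.9) implies that $B$ is reduced, so $J$ is radical in $\fq[\boldsymbol A_0,\boldsymbol T]$. The minor point of switching from $\fq$ to $\cfq$, if needed to identify $J\cdot\cfq[\boldsymbol A_0,\boldsymbol T]$ with $I(\Gamma_r^*)$, is routine because $\fq$ is perfect; alternatively one reruns Serre's criterion directly on $B\otimes_{\fq}\cfq$, using that Cohen--Macaulayness and the codimension of the singular locus are both preserved under this extension.

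For the ideal--theoretic complete intersection assertion, I would then note that $\Gamma_r^*$ has codimension $(d+r)-(d-m)=m+r$ in $\A^{d+r}$, which equals the number of generators of $J$. Since $J$ is radical, the Nullstellensatz gives $J=I(\Gamma_r^*)$, so $I(\Gamma_r^*)$ is generated by $m+r$ polynomials, matching the codimension; the dimension $d-m$ (and its purity) was already furnished by Lemma \ref{lemma: gamma j is set-theoretic complete intersection}. There is no real obstacle in this corollary: the substantive geometric work has been carried out in Theorem \ref{th: codimension singular locus Gamma j}, and the present statement is essentially a packaging of the regular sequence plus codimension-of-singularities data through Serre's criterion.
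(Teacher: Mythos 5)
Your overall strategy (Cohen--Macaulayness from the regular sequence for $S_1$, plus control of the singular locus for $R_0$, then Serre's criterion) is methodologically close to the paper's, which appeals to Eisenbud's Theorem 18.15. However, there is a genuine gap in the step where you claim that Theorem~\ref{th: codimension singular locus Gamma j} ``translates directly'' into Serre's condition $R_1$ (or even just $R_0$) for the ring $B=\fq[\boldsymbol A_0,\boldsymbol T]/J$.

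The issue is that the paper defines the singular locus of $\Gamma_r^*$ in the classical variety-theoretic sense: the tangent space at $x$ is the kernel of the Jacobian of a system of generators of $I(\Gamma_r^*)=\sqrt{J}$. Theorem~\ref{th: codimension singular locus Gamma j} is therefore a statement about $R/\sqrt{J}$, not about $R/J$. But $R_0$ for $B=R/J$ requires that at each minimal prime $\mathfrak p$ the localization $B_\mathfrak p$ be a field, which (by the Jacobian criterion) amounts to the Jacobian of the \emph{actual generators of $J$} --- namely $G_1,\ldots,G_m$ and the divided differences $\Delta^{i-1}F$ --- having full rank generically on each component. Since $J\subset\sqrt J$, the rank of that Jacobian at a point of $\Gamma_r^*$ is a priori \emph{smaller} than the rank of the Jacobian built from generators of $\sqrt J$, so having a small variety-theoretic singular locus does not by itself control the Jacobian of $J$'s generators. (The toy example $J=(x^2)\subset k[x]$ makes the distinction vivid: the point $0$ is a regular point of the variety $\{0\}$, yet $k[x]/(x^2)$ is not reduced and its Jacobian ideal vanishes.) The circularity is unavoidable: until one knows $J$ is radical, the singular locus of the variety and the non-regular locus of $\mathrm{Spec}(B)$ are different objects.

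The paper closes exactly this gap. Using the Newton-form identity, it shows $F(\boldsymbol A_0,T_i)\in J$ (not merely in $\sqrt J$); differentiating this membership at a point of $V(J)$ gives $\mathrm{rank}\,J_{\boldsymbol G,\boldsymbol F}\le \mathrm{rank}\,J_{\boldsymbol G,\boldsymbol\Delta}$, so that full rank of the auxiliary Jacobian $J_{\boldsymbol G,\boldsymbol F}$ forces full rank of the Jacobian of $J$'s generators. Combining this with Corollary~\ref{coro: incidence var singular points V in codim 2} and the fact (from $({\sf H_4})$) that $\Psi_r^{-1}(\mathcal D(V))$ has codimension one, the paper shows the Jacobian-drop locus of $J$'s generators has codimension $\ge 1$, which is precisely what Eisenbud's criterion requires. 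Your argument needs this same bridge (or an equivalent one, e.g.\ observing that the proofs of Lemmas~\ref{lemma: jacobian_F full rank implies nonsingular}--\ref{lemma: repeated multiple root II - mean} in fact exhibit, outside a codimension-2 set, an $(m+r)\times(m+r)$ nonsingular Jacobian of polynomials lying in $J$, from which local radicalness follows); without it, the assertion that Theorem~\ref{th: codimension singular locus Gamma j} yields $R_0$ for $B$ is unjustified.
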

\begin{proof}
We prove that $J$ is a radical ideal. Denote by $J_{\boldsymbol
G,\boldsymbol \Delta}(\boldsymbol{A}_0,\boldsymbol{T})$ the Jacobian
matrix of $G_1,\ldots,G_m$ and $\Delta^{i-1}F({\boldsymbol
A_0,T_1,\ldots,T_i})$ ($1\leq i\leq r$) with respect to
$\boldsymbol{A}_0,\boldsymbol{T}$. By Lemma \ref{lemma: gamma j is
set-theoretic complete intersection}, these polynomials form a
regular sequence. Hence, according to \cite[Theorem
18.15]{Eisenbud95}, it is sufficient to prove that the set of points
$(\boldsymbol{a}_0,\boldsymbol{\alpha}) \in\Gamma_r^*$ for which
$J_{\boldsymbol G,\boldsymbol
\Delta}(\boldsymbol{a}_0,\boldsymbol{\alpha})$ does not have full
rank is contained in a subvariety of $\Gamma_r^*$ of codimension at
least 1.

In the proof of Lemma \ref{lemma: jacobian_F full rank implies
nonsingular} we show that $F(\boldsymbol A_0,T_i)\in J$ for $1\leq i
\leq r$. This implies that each gradient $\nabla
F(\boldsymbol{a}_0,\alpha_i)$ is a linear combination of the
gradients of $G_1,\ldots,G_m$ and $\Delta^{i-1}F(\boldsymbol
A_0,T_1,\ldots,T_i)$ $(1\leq i\leq r)$. We conclude that
$\mathrm{rank}\,J_{\boldsymbol G,\boldsymbol
F}(\boldsymbol{a}_0,\boldsymbol{\alpha})\le
\mathrm{rank}\,J_{\boldsymbol G,\boldsymbol
\Delta}(\boldsymbol{a}_0,\boldsymbol{\alpha})$.

Let $(\boldsymbol{a}_0,\boldsymbol{\alpha})$ be an arbitrary point
of $\Gamma_r^*$ such that $J_{\boldsymbol G,\boldsymbol
\Delta}(\boldsymbol{a}_0,\boldsymbol{\alpha})$ does not have full
rank. By Corollary \ref{coro: incidence var singular points V in
codim 2} we may assume without loss of generality that $\boldsymbol
a_0\in V\setminus\mathrm{Sing}(V)$. Then $J_{\boldsymbol
G,\boldsymbol F}(\boldsymbol{a}_0,\boldsymbol{\alpha})$ does not
have full rank and thus $f_{\boldsymbol a_0}$ has multiple roots.
Observe that the set of points
$(\boldsymbol{a}_0,\boldsymbol{\alpha})\in\Gamma_r^*$ for which
$f_{\boldsymbol a_0}$ has multiple roots is equal to
$\Psi_r^{-1}(\mathcal{D}(V))$, where $\mathcal{D}(V)$ is
discriminant locus of $V$. According to hypothesis $({\sf H_4})$,
$\mathcal{D}(V)$ has codimension one in $V$, which implies that
$\Psi_r^{-1}(\mathcal{D}(V))$ has codimension 1 of $\Gamma_r^*$. It
follows that the set of points
$(\boldsymbol{a}_0,\boldsymbol{\alpha}) \in\Gamma_r^*$ for which
$J_{\boldsymbol G,\boldsymbol
\Delta}(\boldsymbol{a}_0,\boldsymbol{\alpha})$ does not have full
rank is contained in a subvariety of $\Gamma_r^*$ of codimension at
least 1. Hence, $J$ is a radical ideal, which in turn implies that
$\Gamma_r^*$ is an ideal--theoretic complete intersection of
dimension $d-m$.
\end{proof}
%
%---------------------------------------------------------------------
%---------------------------------------------------------------------
%---------------------------------------------------------------------
%---------------------------------------------------------------------
%---------------------------------------------------------------------
%---------------------------------------------------------------------
%---------------------------------------------------------------------
%---------------------------------------------------------------------
%
\section{The geometry of the projective closure of $\Gamma_r^*$}
\label{section: geometry of pcl Gamma r}
To estimate the number of $\fq$--rational points of $\Gamma_{r}^*$
we need information on the behavior of $\Gamma_{r}^*$ at infinity.
For this purpose, we shall analyze the projective closure of
$\Gamma_{r}^*$, whose definition we now recall. Consider the
embedding of $\A^{d+r}$ into the projective space $\Pp^{d+r}$ which
assigns to any point $(\boldsymbol a_0,\boldsymbol{\alpha})\in
\A^{d+r}$ the point
$(a_{d-1}:\dots:a_0:1:\alpha_1:\dots:\alpha_r)\in \Pp^{d+r}$. The
closure in the Zariski topology of $\Pp^{d+r}$ of the image of
$\Gamma_{r}^*$ under this embedding is called the {\em projective
closure} $\mathrm{pcl}(\Gamma_{r}^*)\subset\Pp^{d+r}$ of
$\Gamma_{r}^*$. The points of $\mathrm{pcl}(\Gamma_{r}^*)$ lying in
$\{T_0=0\}$ are called the points of $\mathrm{pcl}(\Gamma_{r}^*)$
{\em at infinity}.

It is well--known that $\mathrm{pcl}(\Gamma_{r}^*)$ is the
$\fq$--variety of $\Pp^{d+r}$ defined by the homogenization $F^h \in
\fq[\boldsymbol{A}_0,T_0,\boldsymbol{T}]$ of each polynomial $F$ in
the ideal $J\subset \fq[\boldsymbol A_0,\boldsymbol T]$ generated by
$G_1,\ldots,G_m$ and $\Delta^{i-1}F(\boldsymbol A_0,T_1,\ldots,T_i)$
$(1\leq i \leq r)$. We denote by $J^{h}$ the ideal generated by all
the polynomials $F^h$ with $F\in J$. Since $J$ is radical it turns
out that $J^h$ is also radical (see, e.g., \cite[\S I.5, Exercise
6]{Kunz85}). Furthermore, $\mathrm{pcl}(\Gamma_{r}^*)$ is of pure
dimension $d-m$ (see, e.g., \cite[Propositions I.5.17 and
II.4.1]{Kunz85}) and degree equal to $\deg\Gamma_r^*$ (see, e.g.,
\cite[Proposition 1.11]{CaGaHe91}).

\begin{lemma}\label{lemma: pcl gamma j is set-theoretic complete int}
The polynomials $G^h_1,\ldots,G^h_m$ and $\Delta^{i-1}F(\boldsymbol
A_0,T_1,\ldots,T_i)^h$ $(1\leq i \leq r)$ form a regular sequence of
$\fq[\boldsymbol{A}_0,T_0,\boldsymbol{T}]$.
\end{lemma}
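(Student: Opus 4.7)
The plan is to establish the regular-sequence property in two stages: first for the initial block $G_1^h,\dots,G_m^h$, which involves only the variables $\boldsymbol A_0,T_0$, and then by appending the $\Delta^{i-1}F^h$ one at a time via the same monic-in-a-fresh-variable trick already used in the proof of Lemma~\ref{lemma: gamma j is set-theoretic complete intersection}.

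For the first stage I would exploit that $S:=\fq[\boldsymbol A_0,T_0,\boldsymbol T]$ is Cohen--Macaulay, so the homogeneous polynomials $G_1^h,\dots,G_m^h$ form a regular sequence as soon as the ideal they generate has height $m$; equivalently, the projective variety $V^h\subset\Pp^d$ they cut out in the coordinates $A_{d-1}{:}\cdots{:}A_0{:}T_0$ has pure dimension $d-m$. Its affine chart $V^h\cap\{T_0\neq 0\}$ is the variety $V$, which has pure dimension $d-m$ by $({\sf H_1})$. Its part at infinity $V^h\cap\{T_0=0\}\subset\Pp^{d-1}$ is cut out by $G_1^{d_1},\dots,G_m^{d_m}$; since these polynomials belong to $\fq[A_{d-1},\dots,A_1]$ and, by $({\sf H_3})$, form a regular sequence in that subring, they define a projective variety of pure dimension $d-m-2$ in the hyperplane $\{A_0=0\}\cong\Pp^{d-2}$, and $V^h\cap\{T_0=0\}$ is the projective cone over it with vertex $[0{:}\cdots{:}0{:}1{:}0]$, hence of dimension $d-m-1$. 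Krull's Hauptsatz forces every irreducible component of $V^h$ to have dimension at least $d-m$; since the infinity part has strictly smaller dimension, no component can live entirely there, so every component meets the affine chart and thus has dimension exactly $d-m$. Adjoining the variables $\boldsymbol T$ preserves regularity, so $G_1^h,\dots,G_m^h$ form a regular sequence in $S$.

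For the second stage I would induct on $i$, assuming $G_1^h,\dots,G_m^h,\Delta^0F^h,\dots,\Delta^{i-2}F^h$ already form a regular sequence in $S$. None of these $m+i-1$ polynomials involves $T_i$, because $\Delta^{j}F$ depends only on $T_1,\dots,T_{j+1}$. On the other hand, as observed in the proof of Lemma~\ref{lemma: gamma j is set-theoretic complete intersection}, the monomial $T_i^{d-i+1}$ appears in $\Delta^{i-1}F(\boldsymbol A_0,T_1,\dots,T_i)$ with a nonzero coefficient, and since it already realizes the total degree $d-i+1$ of $\Delta^{i-1}F$, its coefficient is preserved by homogenization. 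Hence $\Delta^{i-1}F^h$ is, up to a nonzero scalar, monic in the fresh variable $T_i$; passing to the quotient $R:=S/(G_1^h,\dots,G_m^h,\Delta^0F^h,\dots,\Delta^{i-2}F^h)$ and writing $R=R_0[T_i,T_{i+1},\dots,T_r]$, the class of $\Delta^{i-1}F^h$ remains monic in $T_i$ and is therefore a non-zerodivisor in $R$, which closes the induction.

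The principal obstacle is stage one: one must combine $({\sf H_1})$ and $({\sf H_3})$ to pin down both the affine chart and the part at infinity of $V^h$ simultaneously, together with the bookkeeping of coordinates that shows the infinity part is a cone of the predicted dimension. Once that dimension count is in place, stage two is a formal iteration of the monicity observation already exploited in Lemma~\ref{lemma: gamma j is set-theoretic complete intersection}.
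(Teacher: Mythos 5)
Your proof is correct and follows essentially the same strategy as the paper: first establish that $G_1^h,\dots,G_m^h$ form a regular sequence by a dimension count that hinges on hypothesis $({\sf H_3})$ for the part at infinity, then append the $\Delta^{i-1}F^h$ inductively via the monic-in-$T_i$ observation already used in Lemma~\ref{lemma: gamma j is set-theoretic complete intersection}. The only cosmetic difference is that the paper reads off the pure dimension directly from the hyperplane section $\{T_0=0\}$ and Krull, while you spell out the chart-plus-infinity decomposition (invoking $({\sf H_1})$ for the affine chart as well) and make explicit the Cohen--Macaulay characterization of regular sequences and the preservation of the leading $T_i$-coefficient under homogenization; these are details the paper leaves implicit.
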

\begin{proof}
We claim $G^h_1,\ldots,G^h_m$ form a regular sequence of
$\fq[\boldsymbol{A}_0,T_0,\boldsymbol{T}]$. Indeed, the projective
subvariety of $\Pp^{d+r}$ defined by $T_0$ and $G^h_1,\ldots,G^h_m$
is isomorphic to the subvariety of $\Pp^{d+r-1}$ defined by
$G^{d_1}_1,\ldots,G^{d_m}_m$. Since hypothesis $({\sf H_3})$ implies
that the latter is of pure dimension $d+r-m-1$, we conclude that the
subvariety of $\Pp^{d+r}$ defined by $G^h_1,\ldots,G^h_m$ is of pure
dimension $d+r-m$. It follows that $G^h_1,\ldots,G^h_m$ form a
regular sequence. Then the lemma follows arguing as in the proof of
Lemma \ref{lemma: gamma j is set-theoretic complete intersection}.
\end{proof}

\begin{proposition}\label{prop: pcl gamma j is ideal-theoretic complete inters}
The projective variety defined by $G^h_1,\ldots,G^h_m$ and
$\Delta^{i-1}F(\boldsymbol A_0,T_1,\ldots,T_i)^h$ $(1\leq i \leq r)$
is $\mathrm{pcl}(\Gamma_r^*)$. Therefore,
$\mathrm{pcl}(\Gamma_{r}^*)$ is a set--theoretic complete
intersection of dimension $d-m$.
\end{proposition}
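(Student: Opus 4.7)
The plan is to show that the projective variety $W := V(G_1^h,\ldots,G_m^h,(\Delta^{i-1}F)^h\,(1\le i\le r))$ coincides with $\mathrm{pcl}(\Gamma_r^*)$; the second assertion will then be immediate from Lemma \ref{lemma: pcl gamma j is set-theoretic complete int}. The easy containment $\mathrm{pcl}(\Gamma_r^*)\subseteq W$ follows because dehomogenizing any defining equation of $W$ with respect to $T_0$ recovers a defining equation of $\Gamma_r^*$, so every generator of $I(W)$ vanishes on $\Gamma_r^*$ and hence on its Zariski closure. The same observation yields $W\cap\{T_0\neq 0\}=\Gamma_r^*$.

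Next I would invoke Lemma \ref{lemma: pcl gamma j is set-theoretic complete int} to conclude that $W$ is a set--theoretic complete intersection in $\Pp^{d+r}$ of pure dimension $d-m$. Since $\mathrm{pcl}(\Gamma_r^*)$ is also pure of dimension $d-m$ and the two varieties agree on the open chart $\{T_0\neq 0\}$, the equality $W=\mathrm{pcl}(\Gamma_r^*)$ reduces to ruling out components of $W$ contained in the hyperplane at infinity. Concretely, it suffices to prove the strict inequality $\dim(W\cap\{T_0=0\})<d-m$.

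This dimension bound at infinity is the main obstacle, and I would attack it by identifying the equations cutting out $W\cap\{T_0=0\}$ inside $\Pp^{d+r-1}$. Setting $T_0=0$ replaces each $G_i^h$ by its top--degree component $G_i^{d_i}\in\fq[A_{d-1},\ldots,A_1]$, and replaces $(\Delta^{i-1}F)^h$ by the top--degree part of $\Delta^{i-1}F$, which is the $(i-1)$th divided difference of the leading monomial $T^d$ of $F$, namely the complete homogeneous symmetric polynomial $h_{d-i+1}(T_1,\ldots,T_i)$. These $m+r$ polynomials live in disjoint sets of variables $\boldsymbol A=(A_{d-1},\ldots,A_1)$ and $\boldsymbol T=(T_1,\ldots,T_r)$, and $A_0$ appears in none of them. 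Hypothesis $({\sf H_3})$ gives that $G_1^{d_1},\ldots,G_m^{d_m}$ is a regular sequence in $\fq[\boldsymbol A]$, and for the $h$--polynomials I would argue by induction on $i$: since $h_{d-i+1}(T_1,\ldots,T_i)$ contains the monomial $T_i^{d-i+1}$ with coefficient $1$ and is the only one of them involving $T_i$, it is monic in $T_i$ over the quotient by the previous ones, hence a non--zero--divisor.

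Combining the two regular sequences (which act on disjoint variables) and adjoining the free variable $A_0$ produces a regular sequence of length $m+r$ cutting out $W\cap\{T_0=0\}$ in $\Pp^{d+r-1}$, so $\dim(W\cap\{T_0=0\})=(d+r-1)-(m+r)=d-m-1$, which is strictly less than $\dim W=d-m$. Consequently no irreducible component of $W$ lies entirely at infinity; $W$ equals the Zariski closure of $W\cap\{T_0\neq 0\}=\Gamma_r^*$, that is, $W=\mathrm{pcl}(\Gamma_r^*)$, and the complete intersection statement follows from Lemma \ref{lemma: pcl gamma j is set-theoretic complete int}.
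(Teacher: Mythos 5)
Your overall strategy — reduce to showing that $W$ has no irreducible component in $\{T_0=0\}$, and get that by bounding $\dim(W\cap\{T_0=0\})$ by $d-m-1$ via a regular-sequence argument on the top-degree parts — is a legitimate and arguably more direct route than the paper's. The paper instead invokes the Newton-form identity
$F(\boldsymbol A_0,T_j)=\sum_{i=1}^r\Delta^{i-1}F(\boldsymbol A_0,T_1,\ldots,T_i)\,(T_j-T_1)\cdots(T_j-T_{i-1})$
to place each $F(\boldsymbol A_0,T_j)^h$ in the ideal; the restrictions $F(\boldsymbol A_0,T_i)^h|_{T_0=0}=T_i^{d-1}(T_i+A_{d-1})$ then factor, giving a combinatorial decomposition of $V^h\cap\{T_0=0\}$ into $2^r$ linear pieces $V_\mathcal{I}$, each of pure dimension $d-m-1$. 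That decomposition is reused later (Lemma \ref{lemma: singular locus pcl Gamma_j at infinity}), so the paper gets more mileage out of it; your argument proves Proposition \ref{prop: pcl gamma j is ideal-theoretic complete inters} but would not directly yield the explicit description of the part at infinity.

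However, there is a concrete error in your identification of the equations at infinity. Since $F(\boldsymbol A_0,T)=T^d+A_{d-1}T^{d-1}+\cdots+A_0$ and each $A_j$ is a degree-one variable, the homogeneous part of top degree $d$ of $F$ is $T^d+A_{d-1}T^{d-1}$, not $T^d$ alone. Consequently the top-degree part of $\Delta^{i-1}F$ is
$h_{d-i+1}(T_1,\ldots,T_i)+A_{d-1}\,h_{d-i}(T_1,\ldots,T_i)$,
not just $h_{d-i+1}(T_1,\ldots,T_i)$. These polynomials therefore do involve $A_{d-1}$, so the claim that the $m+r$ restricted polynomials ``live in disjoint sets of variables'' is false, and the ``combining two regular sequences on disjoint variables'' step is unjustified as written. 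The argument can be repaired: your monic-in-$T_i$ observation survives (the coefficient of $T_i^{d-i+1}$ is still $1$, and the $A_{d-1}h_{d-i}$ correction has strictly smaller $T_i$-degree), so one can instead argue, exactly as in the paper's Lemmas \ref{lemma: gamma j is set-theoretic complete intersection} and \ref{lemma: pcl gamma j is set-theoretic complete int}, that after $G_1^{d_1},\ldots,G_m^{d_m}$ (a regular sequence by $({\sf H_3})$), each successive top-degree part of $\Delta^{i-1}F$ is a nonzerodivisor because it is monic in the new variable $T_i$. But you must drop the disjoint-variables justification and replace it with this one.
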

\begin{proof}
By the Newton form of the polynomial interpolating $F(\boldsymbol
A_0,T)$ at the points $T_1,\ldots, T_r$ we conclude that
$$F(\boldsymbol A_0,T_j)=\sum_{i=1}^r\Delta^{i-1}F(\boldsymbol
A_0,T_1,\ldots,T_i)\,(T_j-T_1)\cdots(T_j-T_{i-1}).$$
Homogenizing both sides of this equality we deduce that
$F(\boldsymbol A_0,T_j)^h$ belongs to the ideal of $\fq[\boldsymbol
A_0,T_0,\boldsymbol T]$ generated by $G^h_1,\ldots,G^h_m$ and
$\Delta^{i-1}F(\boldsymbol A_0,T_1,\ldots,T_i)^h$. Denote by
$V^h\subset\Pp^{d+r}$ the variety defined by all these polynomials.
Any point of $V^h\cap\{T_0=0\}\subset\Pp^{d+r}$ satisfies
\begin{align*}%\label{eq: equations pcl at infinity I}
F(\boldsymbol
A_0,T_i)^h|_{T_0=0}=T_i^{d}+A_{d-1}T_i^{d-1}=T_i^{d-1}(T_i+A_{d-1})&=0\quad(1\le
i\le r),\\
G^h_k(\boldsymbol A,T_0)^h|_{T_0=0}=G^{d_k}_k(\boldsymbol
A)&=0\quad(1\le k\le m).\nonumber
\end{align*}
We deduce that $V^h\cap\{T_0=0\}$ is contained in the union
$$V^h\cap\{T_0=0\}\subset \bigcup_{\mathcal{I}\subset\{1,\ldots, r\}}^r V_\mathcal{I}\cap\{T_0=0\},$$
where $V_\mathcal{I}\subset\Pp^{d+r}$ is the variety defined by
$T_i=0$ ($i \in\mathcal{I}$), $T_j+A_{d-1}=0$ $(j\in\{1,\ldots,
r\}\setminus\mathcal{I})$ and $G_k^{d_k}=0$ ($1\leq k \leq m$). As
any $V_\mathcal{I}\cap \{T_0=0\}$ is of pure dimension $d-m-1$,
$V^h\cap \{T_0=0\}$ is of pure dimension $d-m-1$.

Lemma \ref{lemma: pcl gamma j is set-theoretic complete int} implies
that $V^h$ is of pure dimension $d-m$, and thus it has no
irreducible component in the hyperplane at infinity. In particular,
it agrees with the projective closure of its restriction to
$\A^{d+r}$ (see, e.g., \cite[Proposition I.5.17]{Kunz85}). As this
restriction is $\Gamma_r^*$, we have $V^h=\mathrm{pcl}(\Gamma_r^*)$.
\end{proof}
%
%---------------------------------------------------------------------
%---------------------------------------------------------------------
%---------------------------------------------------------------------
%---------------------------------------------------------------------
%
\subsection{The singular locus of $\mathrm{pcl}(\Gamma_{r}^*)$}
Next we study the singular locus of $\mathrm{pcl}(\Gamma_{r}^*)$. We
start with the following characterization of the points of
$\mathrm{pcl}(\Gamma_{r}^*)$ at infinity.
\begin{lemma}\label{lemma: singular locus pcl Gamma_j at infinity}
$\mathrm{pcl}(\Gamma_{r}^*)\cap \{T_0=0\}\subset \Pp^{d+r-1}$ is
contained in a union of $r+1$ normal complete intersections defined
over $\fq$, each of pure dimension $d-m-1$ and degree
$\prod_{i=1}^md_i$.
\end{lemma}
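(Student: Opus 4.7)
My plan is to exhibit $r+1$ explicit subvarieties $W_0,\dots,W_r$ of the hyperplane $\{T_0=0\}\subset\Pp^{d+r}$ and show that the intersection is contained in their union. By Proposition~\ref{prop: pcl gamma j is ideal-theoretic complete inters}, $\mathrm{pcl}(\Gamma_r^*)$ is cut out by the $G_k^h$ ($1\le k\le m$) and the $\Delta^{i-1}F(\boldsymbol A_0,T_1,\dots,T_i)^h$ ($1\le i\le r$). Restricting to $\{T_0=0\}$ extracts their leading forms: $G_k^h|_{T_0=0}=G_k^{d_k}(\boldsymbol A)$, while a straightforward degree count (the total degree of $\Delta^{i-1}F$ in $\boldsymbol A_0,T_1,\dots,T_i$ is $d-i+1$, and the divided--difference operator commutes with passage to the leading form) yields
$$\Delta^{i-1}F(\boldsymbol A_0,T_1,\dots,T_i)^h\big|_{T_0=0}=\Delta^{i-1}F_\infty(T_1,\dots,T_i),$$
where $F_\infty(T):=T^{d-1}(T+A_{d-1})$ is the leading form of $F(\boldsymbol A_0,T)$.

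The decisive step is a root analysis based on Newton's interpolation formula: the conditions $\Delta^{i-1}F_\infty(T_1,\dots,T_i)=0$ for $1\le i\le r$ say exactly that the Hermite interpolating polynomial of $F_\infty$ at the nodes $T_1,\dots,T_r$ vanishes identically. Equivalently, at any point $(\bar{\boldsymbol a},\bar{\boldsymbol t})$ of $\mathrm{pcl}(\Gamma_r^*)\cap\{T_0=0\}$, each distinct value appearing among $\bar t_1,\dots,\bar t_r$ must be a root of $F_\infty$ of multiplicity at least the number of times it occurs. Since $F_\infty=T^{d-1}(T+A_{d-1})$ has $0$ as a root of multiplicity $d-1$ and $-A_{d-1}$ as a simple root (collapsing to $0$ when $\bar a_{d-1}=0$), at most one of the coordinates $\bar t_i$ can equal $-\bar a_{d-1}$, and all the others must vanish.

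These configurations yield the $r+1$ candidate subvarieties. Define $W_0\subset\{T_0=0\}$ by $G_k^{d_k}(\boldsymbol A)=0$ ($1\le k\le m$) together with $T_1=\dots=T_r=0$, and for each $1\le j\le r$ define $W_j$ by $G_k^{d_k}(\boldsymbol A)=0$ together with $T_j+A_{d-1}=0$ and $T_i=0$ for $i\ne j$. The previous paragraph gives $\mathrm{pcl}(\Gamma_r^*)\cap\{T_0=0\}\subset\bigcup_{j=0}^{r}W_j$. Each $W_j$ is cut out by $m+r$ equations in the $(d+r-1)$--dimensional ambient space $\{T_0=0\}\cong\Pp^{d+r-1}$; its $r$ linear equations determine the $T_i$ uniquely in terms of $A_{d-1}$, so the projection onto the $\boldsymbol A$--coordinates induces an isomorphism between $W_j$ and the projective $\fq$--variety in $\Pp^{d-1}$ defined by $G_1^{d_1},\dots,G_m^{d_m}$. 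By hypothesis $({\sf H_3})$ that variety is a normal complete intersection of dimension $d-m-1$, and by the B\'ezout theorem of degree $\prod_{k=1}^{m}d_k=\delta$; $W_j$ inherits these properties.

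The main obstacle I anticipate is the Hermite interpolation step---rigorously excluding the possibility that two or more of the $\bar t_i$ simultaneously take the value $-\bar a_{d-1}$ off the locus $\{A_{d-1}=0\}$ (which is itself absorbed into $W_0$). A clean route is to use the representation $\Delta^{i-1}F_\infty(T_1,\dots,T_i)=\sum_{j=1}^{i}F_\infty(T_j)/\prod_{k\ne j}(T_j-T_k)$ together with the standard confluence properties of divided differences at coalescent nodes; alternatively, one can proceed by induction on $i$, evaluating $\Delta^{i-1}F_\infty$ on tuples in $\{0,-A_{d-1}\}^i$ and locating the first nonzero divided difference. The remaining verifications---computing the leading form of $\Delta^{i-1}F$ and transferring normality and degree from the $G_k^{d_k}$--variety in $\Pp^{d-1}$ up to $W_j$ via linear elimination---are routine given hypothesis $({\sf H_3})$.
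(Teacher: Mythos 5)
Your proposal is correct, and it reaches the same $r+1$ subvarieties $V_0,\dots,V_r$ as the paper, but the route to the decomposition is genuinely different. The paper first observes that $\Delta^1F(\boldsymbol A_0,T_i,T_j)$ lies in $J$ for \emph{every} pair $i<j$ (not only the consecutive ones coming from the chosen generating set): it vanishes on the dense open set $\{T_i\neq T_j\}\subset\Gamma_r^*$ and hence on all of $\Gamma_r^*$. Passing to $T_0=0$, each coordinate satisfies $T_i^{d-1}(T_i+A_{d-1})=0$, and the restriction of $\Delta^1F(\boldsymbol A_0,T_i,T_j)^h$ then rules out, by a direct two--line substitution, the possibility that two distinct indices both take the value $-A_{d-1}\neq0$. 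This sidesteps the entire Hermite/confluent--divided--difference machinery. You instead keep only the $r$ generators $\Delta^{i-1}F(\boldsymbol A_0,T_1,\dots,T_i)^h$, restrict to $T_0=0$ to get $\Delta^{i-1}F_\infty(T_1,\dots,T_i)=0$ with $F_\infty=T^{d-1}(T+A_{d-1})$, and invoke the Newton error formula $F_\infty(T)=F_\infty[t_1,\dots,t_r,T]\prod_{j=1}^r(T-t_j)$ to conclude $\prod_j(T-t_j)\mid F_\infty$, whence the multiplicity bound. This is a clean conceptual picture, and the degree computation you use to justify $\Delta^{i-1}F(\boldsymbol A_0,T_1,\dots,T_i)^h|_{T_0=0}=\Delta^{i-1}F_\infty$ is correct ($\Delta^{i-1}T^d=h_{d-i+1}$ and $\Delta^{i-1}A_{d-1}T^{d-1}=A_{d-1}h_{d-i}$ are the unique terms of top degree $d-i+1$). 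The piece you flag as the obstacle---the confluent behaviour when nodes coincide---is indeed where the work lies, and the paper avoids it entirely by paying the small price of one extra density argument to get all the pairwise first divided differences into $J^h$. Both proofs then treat the $V_i$ ($=W_j$) identically: the $r$ linear equations project them isomorphically onto $V(G_1^{d_1},\dots,G_m^{d_m})\subset\Pp^{d-1}$, and $({\sf H_3})$ supplies normality, pure dimension $d-m-1$, and degree $\prod d_i$. In short: your proof is sound and more conceptual, the paper's is more elementary and avoids the coalescent--node subtlety; either works for the stated containment.
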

\begin{proof}
We claim that $\Delta^1F(\boldsymbol A_0,T_i,T_j)^h\in J^h$ for
$1\leq i<j\leq r$. Indeed, we have the identity
$\Delta^1F(\boldsymbol A_0,T_i,T_j)(T_i-T_j)=F(\boldsymbol
A_0,T_i)-F(\boldsymbol A_0, T_j)$. Since $F(\boldsymbol A_0, T_k)$
vanishes in $\Gamma^{*}_r$ for $1\leq k \leq r$, we deduce that
$\Delta^1F(\boldsymbol A_0,T_i,T_j)$ vanishes on the nonempty
Zariski open dense subset $\{T_i\not= T_j\}\cap\Gamma_r^*$ of
$\Gamma_r^*$. This implies that $\Delta^1F(\boldsymbol A_0,T_i,T_j)$
vanishes in $\Gamma_r^*$, which proves the claim.

Combining the claim with the fact that $F(\boldsymbol A_0,T_i)^h\in
J^h$ for $1\leq i \leq r$, we conclude that any $(\boldsymbol
a_0,\boldsymbol\alpha)\in \mathrm{pcl}(\Gamma_{r}^*)\cap \{T_0=0\}$
satisfies the following identities for $1\le i\le r$ and $1\le
i<j\le r$ respectively:
\begin{align}\label{eq: equations pcl at infinity I}
F(\boldsymbol A_0,T_i)^h|_{T_0=0}&=T_i^{d}+A_{d-1}T_i^{d-1}=T_i^{d-1}(T_i+A_{d-1})=0,\\
\Delta^{1}F(\boldsymbol A_0,T_i,T_j)^h|_{T_0=0}&=
\frac{T_i^{d}-T_j^{d}}{T_i-T_j}+A_{d-1}\frac{T_i^{d-1}-T_j^{d-1}}{T_i-T_j}\nonumber\\
&=\sum_{k=0}^{d-2}T_j^kT_i^{d-2-k}(T_i+A_{d-1})+T_j^{d-1}=0.
\label{eq: equations pcl at infinity II}
\end{align}
From \eqref{eq: equations pcl at infinity I}--\eqref{eq: equations
pcl at infinity II} we deduce that
$\mathrm{pcl}(\Gamma_{r}^*)\cap\{T_0=0\}$ is contained in a finite
union of $r+1$ normal complete intersections of $\Pp^{d+r-1}$
defined over $\fq$ of pure dimension $d-m-1$. More precisely, it can
be seen that
$$\mathrm{pcl}(\Gamma_{r}^*)\cap\{T_0=0\}\subset \bigcup_{i=0}^r
V_i\cap\{T_0=0\},$$
where $V_0$ is the variety defined by $T_i=0$ ($1\leq i \leq r$) and
$G_k^{d_k}=0$ ($1\leq k \leq m$), and $V_i$ $(1\leq i \leq r)$ is
defined as the set of solutions of
$$T_i+A_{d-1}=0,\ \ T_j=0\ \,(1\le j\le r,\,i\neq j), \ \ G_k^{d_k}=0\
\,(1\leq k \leq m).
$$

By Proposition \ref{prop: pcl gamma j is ideal-theoretic complete
inters} we have that $\mathrm{pcl}(\Gamma_{r}^*)$ is of pure
dimension $d-m$. Then each irreducible component $\mathcal{C}$ of
$\mathrm{pcl}(\Gamma_{r}^*)\cap \{T_0=0\}$ has dimension at least
$d-m-1$, and is contained in an irreducible component of a variety
$V_i$ for some $i\in\{0,\ldots, r\}$. By, e.g., \cite[\S 6.1,
Theorem 1]{Shafarevich94}, $\mathcal{C}$ is an irreducible component
of a variety $V_i$, finishing thus the proof of the lemma.
\end{proof}

Now we are able to upper bound the dimension of the  singular locus
of $\mathrm{pcl}(\Gamma_{r}^*)$ at infinity.
\begin{lemma}\label{lemma: dimension of singular locus at infinity}
The singular locus of $\mathrm{pcl}(\Gamma_{r}^*)$ at infinity has
dimension at most $d-m-2$.
\end{lemma}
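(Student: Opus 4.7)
The plan is to combine the structural description from Lemma~\ref{lemma: singular locus pcl Gamma_j at infinity}, which embeds $\mathrm{pcl}(\Gamma_{r}^*)\cap\{T_0=0\}$ inside the union $\bigcup_{i=0}^r V_i\cap\{T_0=0\}$ of $r+1$ normal complete intersections of pure dimension $d-m-1$, with a case analysis splitting a putative singular point at infinity into either (a) a pairwise intersection $V_i\cap V_j$ with $i\neq j$, or (b) an interior point lying in a single $V_i$.

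For (a), I would observe that the defining equations of $V_i$ and $V_j$ together force $T_1=\cdots=T_r=0$; and whenever $\max(i,j)\ge 1$ they also force $A_{d-1}=0$, since one of the $V$'s contributes an equation $T_k+A_{d-1}=0$ while the other contributes $T_k=0$. Combined with $T_0=0$ and the $m$ equations $G_k^{d_k}=0$, this defines a subvariety of $\Pp^{d+r}$ of codimension at least $r+m+2$, hence of dimension at most $d-m-2$. For (b), a point $P\in\mathrm{pcl}(\Gamma_{r}^*)\cap\{T_0=0\}$ lying in exactly one $V_i$ admits a Zariski open neighborhood in which no other $V_j$ appears. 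Since $\mathrm{pcl}(\Gamma_{r}^*)\cap\{T_0=0\}$ is of pure dimension $d-m-1$ (Proposition~\ref{prop: pcl gamma j is ideal-theoretic complete inters}) and $V_i\cap\{T_0=0\}$ is irreducible (Theorem~\ref{theorem: normal complete int implies irred}) of the same dimension, $\mathrm{pcl}(\Gamma_{r}^*)\cap\{T_0=0\}$ agrees with $V_i\cap\{T_0=0\}$ near $P$. Provided $P$ is a nonsingular point of the normal complete intersection $V_i\cap\{T_0=0\}$, the Jacobian of the $m+r$ defining polynomials of $\mathrm{pcl}(\Gamma_{r}^*)$, restricted to the $d+r$ columns $A_{d-1},\ldots,A_0,T_1,\ldots,T_r$ (that is, ignoring the $T_0$--column), must already have rank $m+r$; adjoining the $T_0$--column cannot decrease this rank, so $P$ is nonsingular on $\mathrm{pcl}(\Gamma_{r}^*)$.

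Combining both cases, every singular point of $\mathrm{pcl}(\Gamma_{r}^*)$ at infinity lies either in the pairwise intersection locus (dimension at most $d-m-2$) or in the singular locus of some $V_i\cap\{T_0=0\}$. By normality of the $V_i\cap\{T_0=0\}$, the latter has codimension at least $2$ in $V_i\cap\{T_0=0\}$, hence dimension at most $d-m-3$. Therefore $\mathrm{Sing}(\mathrm{pcl}(\Gamma_{r}^*))\cap\{T_0=0\}$ has dimension at most $d-m-2$, as claimed. The main technical obstacle is the Jacobian transfer in case (b): one must argue that the local set--theoretic identification of $\mathrm{pcl}(\Gamma_{r}^*)\cap\{T_0=0\}$ with $V_i\cap\{T_0=0\}$ at an interior point $P$ does transfer nonsingularity, which relies jointly on the purity of $\mathrm{pcl}(\Gamma_{r}^*)\cap\{T_0=0\}$ and the irreducibility of the normal complete intersection $V_i\cap\{T_0=0\}$.
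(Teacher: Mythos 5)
Your plan diverges substantially from the paper's, which is essentially a one-liner: the paper cites \cite[Lemma~1.1]{GhLa02a} to assert that the singular locus of $\mathrm{pcl}(\Gamma_r^*)$ at infinity is contained in the singular locus of the hyperplane section $\mathrm{pcl}(\Gamma_r^*)\cap\{T_0=0\}$, then observes that this hyperplane section has pure dimension $d-m-1$ (from Lemma~\ref{lemma: singular locus pcl Gamma_j at infinity}), so that its singular locus, being a proper closed subset of a reduced pure--dimensional variety, has dimension at most $d-m-2$. No case analysis on the $V_i$ is needed.

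The decisive problem is the Jacobian transfer in your case~(b), which you flag but do not resolve. The polynomials $G_k^h$ and $\Delta^{i-1}F^h$ that define $\mathrm{pcl}(\Gamma_r^*)$ are not the polynomials defining $V_i$; set--theoretic agreement of the two loci near $P$ does not make their Jacobians have the same rank (compare $x^2$ with $x$ at the origin). At this stage of the paper one only knows that $G_k^h$ and $\Delta^{i-1}F^h$ generate a radical ideal on the chart $\{T_0\neq 0\}$; that they generate the local ideal at points of $\{T_0=0\}$ is exactly what is proved \emph{later}, in Lemma~\ref{lemma: pcl gamma j is ideal-theoretic complete inters}, so invoking it here would be circular. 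The correct way to close your gap is to observe that local set--theoretic agreement of $\mathrm{pcl}(\Gamma_r^*)\cap\{T_0=0\}$ with $V_i\cap\{T_0=0\}$ near $P$ gives the same local ring, hence the same tangent space, so nonsingularity of $P$ on $V_i\cap\{T_0=0\}$ transfers to $\mathrm{pcl}(\Gamma_r^*)\cap\{T_0=0\}$; but to pass from nonsingularity on the hyperplane section to nonsingularity on $\mathrm{pcl}(\Gamma_r^*)$ you then still need \cite[Lemma~1.1]{GhLa02a}, and your detour has added nothing. There is also a smaller issue in case~(a): the codimension count $r+m+2$ tacitly assumes $A_{d-1}$ does not already vanish identically on $\{G_k^{d_k}=0\}$; the stated hypotheses do not rule that degenerate possibility out, and if it occurs all the $V_i$ collapse to $V_0$, the pairwise intersections have dimension $d-m-1$, and your dichotomy breaks down. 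The paper's route bypasses both difficulties.
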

\begin{proof}
By \cite[Lemma 1.1]{GhLa02a}, the singular locus of
$\mathrm{pcl}(\Gamma_r^*)$ at infinity is contained in the singular
locus of $\mathrm{pcl}(\Gamma_r^*)\cap \{T_0=0\}$. Lemma \ref{lemma:
singular locus pcl Gamma_j at infinity} proves that
$\mathrm{pcl}(\Gamma_r^*)\cap \{T_0=0\}$ has pure dimension $d-m-1$.
Therefore, its singular locus has dimension at most $d-m-2$.
\end{proof}

\begin{lemma}\label{lemma: pcl gamma j is ideal-theoretic complete inters}
The polynomials $G^h_1,\ldots,G^h_m$ and $\Delta^{i-1}F(\boldsymbol
A_0,T_1,\ldots,T_i)^h$ $(1\leq i \leq r)$ generate $J^h$. Hence,
$\mathrm{pcl}(\Gamma_{r}^*)$ is an ideal--theoretic complete
intersection of dimension $d-m$ and multidegree
$(d_1,\ldots,d_m,d,\ldots,d-r+1)$.
\end{lemma}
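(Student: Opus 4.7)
The plan is to upgrade $\mathrm{pcl}(\Gamma_r^*)$ from a set--theoretic to an ideal--theoretic complete intersection by a Jacobian radicality criterion, exactly parallel to the argument used in Corollary~\ref{coro: J is radical} for the affine variety $\Gamma_r^*$. The new ingredient we need compared to the affine case is control of the singular locus at the hyperplane at infinity, which is already in place.

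First I would assemble a global bound on $\mathrm{Sing}(\mathrm{pcl}(\Gamma_r^*))$. The affine part $\mathrm{Sing}(\mathrm{pcl}(\Gamma_r^*))\cap\A^{d+r}$ equals $\mathrm{Sing}(\Gamma_r^*)$, which has codimension at least $2$ in $\Gamma_r^*$ by Theorem~\ref{th: codimension singular locus Gamma j}. For the part at infinity, Lemma~\ref{lemma: dimension of singular locus at infinity} gives dimension at most $d-m-2$, and since $\mathrm{pcl}(\Gamma_r^*)$ is of pure dimension $d-m$ by Proposition~\ref{prop: pcl gamma j is ideal-theoretic complete inters}, this is again codimension at least $2$. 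Hence $\mathrm{Sing}(\mathrm{pcl}(\Gamma_r^*))$ has codimension at least $2$ in $\mathrm{pcl}(\Gamma_r^*)$, and in particular codimension at least $1$.

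Next I would apply the Jacobian criterion \cite[Theorem 18.15]{Eisenbud95} to the ideal $\mathcal{J}\subset\fq[\boldsymbol A_0,T_0,\boldsymbol T]$ generated by the $m+r$ homogeneous polynomials $G^h_1,\ldots,G^h_m$ and $\Delta^{i-1}F(\boldsymbol A_0,T_1,\ldots,T_i)^h$ $(1\le i\le r)$. By Lemma~\ref{lemma: pcl gamma j is set-theoretic complete int} these polynomials form a regular sequence, so the criterion reduces to checking that the locus where their Jacobian matrix fails to have maximal rank $m+r$ is contained in a subvariety of codimension at least $1$ of $V(\mathcal{J})=\mathrm{pcl}(\Gamma_r^*)$. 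As in the proof of Corollary~\ref{coro: J is radical}, the Newton interpolation identity expresses each $F(\boldsymbol A_0,T_j)^h$ as a combination of generators of $\mathcal{J}$, so each $\nabla F(\boldsymbol A_0,T_j)^h$ lies in the row span of the Jacobian of $\mathcal{J}$; consequently the non--maximal--rank locus of the Jacobian of $\mathcal{J}$ is contained in $\mathrm{Sing}(\mathrm{pcl}(\Gamma_r^*))$, which by the previous paragraph has codimension at least $2$. Hence $\mathcal{J}$ is radical.

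Finally, $J$ is radical by Corollary~\ref{coro: J is radical}, and then $J^h$ is radical as recalled at the beginning of Section~\ref{section: geometry of pcl Gamma r}. Since $\mathcal{J}\subseteq J^h$ and both ideals are radical with the same zero set $\mathrm{pcl}(\Gamma_r^*)$ (Proposition~\ref{prop: pcl gamma j is ideal-theoretic complete inters}), the projective Nullstellensatz yields $\mathcal{J}=J^h$. The multidegree then follows by listing the degrees of the chosen generators: $\deg G^h_k=d_k$ for $1\le k\le m$, and from the recursive definition of divided differences one checks by induction that $\Delta^{i-1}F(\boldsymbol A_0,T_1,\ldots,T_i)$ has total degree $d-i+1$, yielding degrees $d,d-1,\ldots,d-r+1$ for the remaining $r$ generators. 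The main obstacle would have been the behavior at infinity, but Lemma~\ref{lemma: dimension of singular locus at infinity} has already absorbed that difficulty, so the argument reduces to a clean application of the Jacobian criterion.
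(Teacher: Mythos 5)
Your argument contains a genuine gap in the step asserting that the non--maximal--rank locus of the Jacobian of $\mathcal{J}$ is contained in $\mathrm{Sing}(\mathrm{pcl}(\Gamma_r^*))$. The row--span observation ($\nabla F(\boldsymbol A_0,T_j)^h$ in the row span of the Jacobian of $\mathcal{J}$ at points of $V(\mathcal{J})$) gives, via the Jacobian criterion, only the opposite containment: $\mathrm{Sing}(\mathrm{pcl}(\Gamma_r^*))\subseteq$ non--maximal--rank locus. A regular point of the variety can perfectly well be one at which the Jacobian of a chosen system of generators drops rank -- the standard example being $V(x^2)=V(x)\subset\A^n$, whose Jacobian $2x$ vanishes identically yet every point is regular. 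The containment you need is precisely what would follow if $\mathcal{J}$ were already known to locally generate $I(\mathrm{pcl}(\Gamma_r^*))$, which is essentially the radicality you are trying to establish, so the reasoning as written is circular. The ``codimension at least $2$'' you invoke for $\mathrm{Sing}(\mathrm{pcl}(\Gamma_r^*))$ is therefore not transferred to the non--maximal--rank locus by your argument.

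The paper circumvents this by splitting the non--maximal--rank locus along $T_0$. On the affine chart $\{T_0\ne 0\}$, the Jacobian of the generators of $\mathcal{J}$ restricts (up to the $T_0$--column) to the Jacobian of $G_1,\ldots,G_m,\Delta^{i-1}F$ $(1\le i\le r)$, and since Corollary~\ref{coro: J is radical} already gives that $J$ is radical, \cite[Theorem 18.15]{Eisenbud95} applied to $J$ guarantees that this Jacobian fails to have full rank only on a set of codimension at least $1$ in $\Gamma_r^*$. The part of the failure locus at infinity is trivially contained in $\mathrm{pcl}(\Gamma_r^*)\cap\{T_0=0\}$, which has codimension $1$ by Proposition~\ref{prop: pcl gamma j is ideal-theoretic complete inters}. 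Putting these together gives the codimension $\ge 1$ bound that Eisenbud's criterion actually requires -- there is no need to reach codimension $2$, and the singular locus plays no direct role. Your concluding steps (radicality of $J^h$, the Nullstellensatz identification $\mathcal{J}=J^h$, and the degree count $\deg\Delta^{i-1}F=d-i+1$) are correct and in fact spelled out more explicitly than in the paper; it is only the codimension estimate that needs to be replaced by the $T_0$--splitting argument.
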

\begin{proof}
According to \cite[Theorem 18.15]{Eisenbud95}, it suffices to prove
that of the set of points of $\mathrm{pcl}(\Gamma_r^*)$ for which
the Jacobian of $G^h_1,\ldots,G^h_m$ and $\Delta^{i-1}F(\boldsymbol
A_0,T_1,\ldots,T_i)^h$ $(1\leq i \leq r)$ does not have full rank,
has codimension at least one in $\mathrm{pcl}(\Gamma_r^*)$. The set
of points of $\{T_0\not=0\}$ for which such a Jacobian does not have
full rank, has codimension one, because $G_1,\ldots,G_m$ and
$\Delta^{i-1}F(\boldsymbol A_0,T_1,\ldots,T_i)$ $(1\leq i \leq r)$
define a radical ideal. On the other hand, the set of points of
$\mathrm{pcl}(\Gamma_r^*)\cap\{T_0=0\}$ has codimension one in
$\mathrm{pcl}(\Gamma_r^*)$. This proves the first assertion of the
lemma.

We deduce that $\mathrm{pcl}(\Gamma_{r}^*)$ is an ideal--theoretic
complete intersection of dimension $d-m$, and the B\'ezout theorem
proves that $\deg\mathrm{pcl}(\Gamma_{r}^*)=\prod_{i=1}^md_i\cdot
d!/{(d-r)!}$.
\end{proof}

Finally, we prove the main result of this section.
\begin{theorem}\label{th: pcl Gamma_j is normal abs irred}
For $q>d\ge m+2$, $\mathrm{pcl}(\Gamma_{r}^*)\subset \Pp^{d+r}$ is a
normal ideal--theoretic complete intersection of dimension $d-m$ and
multidegree $(d_1,\ldots,d_m,d,\ldots,d-r+1)$.
\end{theorem}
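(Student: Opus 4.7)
The plan is simply to assemble the ingredients that have already been collected in Sections~\ref{section: geometry of Gamma_r} and \ref{section: geometry of pcl Gamma r}. The assertions that $\mathrm{pcl}(\Gamma_r^*)$ is an ideal--theoretic complete intersection of dimension $d-m$ with multidegree $(d_1,\ldots,d_m,d,\ldots,d-r+1)$ are exactly the content of Lemma~\ref{lemma: pcl gamma j is ideal-theoretic complete inters}, so nothing further is required for those clauses. The only substantive task is to verify that $\mathrm{pcl}(\Gamma_r^*)$ is \emph{normal} in the sense defined in Section~\ref{sec: notions of algebraic geometry}, i.e.\ that
$$\mathrm{codim}_{\mathrm{pcl}(\Gamma_r^*)}\,\mathrm{Sing}(\mathrm{pcl}(\Gamma_r^*))\ge 2,$$
or equivalently, since $\dim \mathrm{pcl}(\Gamma_r^*)=d-m$, that the singular locus has dimension at most $d-m-2$.

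To do this I would split the singular locus along the hyperplane at infinity:
$$\mathrm{Sing}(\mathrm{pcl}(\Gamma_r^*))=\big(\mathrm{Sing}(\mathrm{pcl}(\Gamma_r^*))\cap\{T_0\ne 0\}\big)\,\cup\,\big(\mathrm{Sing}(\mathrm{pcl}(\Gamma_r^*))\cap\{T_0=0\}\big),$$
and bound each piece separately. On the affine chart $\{T_0\ne 0\}$, the standard identification of $\mathrm{pcl}(\Gamma_r^*)\cap\{T_0\ne 0\}$ with $\Gamma_r^*$ shows that regular/singular points for the projective variety correspond precisely (via a local isomorphism of affine neighbourhoods) to regular/singular points of $\Gamma_r^*$. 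Hence Theorem~\ref{th: codimension singular locus Gamma j} yields that this affine part of the singular locus has codimension at least $2$ in $\Gamma_r^*$, and so dimension at most $d-m-2$. For the part at infinity, Lemma~\ref{lemma: dimension of singular locus at infinity} already establishes the same bound $d-m-2$ on the dimension.

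Combining the two bounds gives $\dim\mathrm{Sing}(\mathrm{pcl}(\Gamma_r^*))\le d-m-2$, which is codimension at least $2$ inside the pure--dimensional variety $\mathrm{pcl}(\Gamma_r^*)$; by the definition of normality for complete intersections used in the paper, this completes the proof. I do not anticipate any real obstacle: all the hard work has been front--loaded into Theorem~\ref{th: codimension singular locus Gamma j} (the affine singular locus analysis via the case split through Lemmas~\ref{lemma: f'=0}--\ref{lemma: repeated multiple root II - mean}) and into Lemmas~\ref{lemma: singular locus pcl Gamma_j at infinity}--\ref{lemma: dimension of singular locus at infinity} (the behaviour at infinity). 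The only subtlety to watch is to make sure the identification of singularities in the affine chart with those of $\Gamma_r^*$ is invoked cleanly, so that Theorem~\ref{th: codimension singular locus Gamma j} applies verbatim.
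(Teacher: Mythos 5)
Your proposal is correct and follows essentially the same route as the paper: cite Lemma~\ref{lemma: pcl gamma j is ideal-theoretic complete inters} for the ideal--theoretic complete intersection, dimension, and multidegree, then combine Theorem~\ref{th: codimension singular locus Gamma j} (controlling the affine singular locus) with Lemma~\ref{lemma: dimension of singular locus at infinity} (controlling the singular locus at infinity) to get codimension at least two, hence normality. The only difference is presentational — you spell out the affine/infinity decomposition that the paper leaves implicit.
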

\begin{proof}
Lemma \ref{lemma: pcl gamma j is ideal-theoretic complete inters}
shows that $\mathrm{pcl}(\Gamma_r^*)$ is an ideal--theoretic
complete intersection of dimension $d-m$ and multidegree
$(d_1,\ldots,d_m,d,\ldots,d-r+1)$. On the other hand, Theorem
\ref{th: codimension singular locus Gamma j} and Lemma \ref{lemma:
dimension of singular locus at infinity} show that the singular
locus of $\mathrm{pcl}(\Gamma_{r}^*)$ has codimension at least $2$
in $\mathrm{pcl}(\Gamma_{r}^*)$. This implies
$\mathrm{pcl}(\Gamma_{r}^*)$ is regular in codimension $1$ and thus
normal.
\end{proof}

By Theorems \ref{theorem: normal complete int implies irred} and
\ref{th: pcl Gamma_j is normal abs irred} we conclude that
$\mathrm{pcl}(\Gamma_{r}^*)$ is absolutely irreducible of dimension
$d-m$, and the same holds for $\Gamma_r^* \subset \A^{d+r}$. Since
$\Gamma_r$ is a nonempty Zariski open subset of $\Gamma_r^*$ of
dimension $d-m$ and $\Gamma_r^*$ is absolutely irreducible, the
Zariski closure of $\Gamma_r$ is $\Gamma_r^*$.
%
%------------------------------------------------------------------
%------------------------------------------------------------------
%------------------------------------------------------------------
%------------------------------------------------------------------
%------------------------------------------------------------------
%------------------------------------------------------------------
%------------------------------------------------------------------
%------------------------------------------------------------------

\section{The number of $\fq$--rational points of $\Gamma_{r}$}
\label{sec: number of points Gamma_j}
As before, let $d$, $m$ be positive integers with $q>d\ge m+2$. Let
$A_{d-1},\ldots,A_1$ be indeterminates over $\fq$, set $\boldsymbol
A:=(A_{d-1},\ldots,A_1)$, and let $G_1, \ldots, G_m$ be polynomials
of $\fq[A_{d-1},\ldots,A_1]$ satisfying hypotheses $({\sf H_1})$,
$({\sf H_2})$, $({\sf H_3})$ and $({\sf H_4})$. Let $\boldsymbol
G:=(G_1,\ldots,G_m)$ and $\mathcal{A}:=\mathcal{A}(\boldsymbol G)$
be the family defined as
$$\mathcal{A}:=\{T^d+a_{d-1}T^{d-1}+\cdots+a_1 T \in \fq[T]:
\,\boldsymbol G(a_{d-1},\ldots,a_1)=\boldsymbol 0\}.$$
%
%We assume that the Jacobian matrix $(\partial \boldsymbol G/
%\partial \boldsymbol A)$ is upper triangular in row echelon form
%and denote by $1\leq j_1<\cdots<j_m\leq d-3$ the positions of the
%columns of $(\partial \boldsymbol G/
%\partial \boldsymbol A)$ corresponding to the pivots.
%
In this section we determine the asymptotic behavior of the average
value set $\mathcal{V}(\mathcal{A})$ of $\mathcal{A}$.
By Lemma \ref{lemma: reduction to interp sets} we have
$$\mathcal{V}(\mathcal{A})= \frac{1}{|\mathcal{A}|}\sum_{r=1}^{d}
   (-1)^{r-1}\sum_{\mathcal{X}_r\subset \fq}\vert S_{\mathcal{X}_r}^{\mathcal{A}}\vert,
$$
where the second sum runs through all the subsets
$\mathcal{X}_r\subset\fq$ of cardinality $r$ and
$S_{\mathcal{X}_r}^{\mathcal{A}}$ denotes the number of
$f+a_0\in\mathcal{A}+\fq$ such that $(f+a_0)(\alpha)=0$ for any
$\alpha\in\mathcal{X}_r$.

Let $S_{r}^{\mathcal{A}}:=\sum_{\mathcal{X}_r\subset \fq}\vert
S_{\mathcal{X}_r}^{\mathcal{A}}\vert$. According to Lemmas
\ref{lemma: relacion entre gamma m,n y S m,n} and \ref{lemma:
relacion gamma_j y gamma_j estrella}, for $1\leq r \leq d$ we have
$$S_{r}^{\mathcal{A}}=\frac{|\Gamma_{r}(\fq)|}{r!}=
\frac{1}{r!}\bigg|\Gamma_r^*(\fq)\setminus\bigcup_{i\not=j}\{T_i=T_j\}\bigg|.$$
We shall apply the results on the geometry of $\Gamma_r^*$ of the
previous section in order to estimate the number of $\fq$--rational
points of $\Gamma_r^*$.
%
%---------------------------------------------------------------------
%---------------------------------------------------------------------
%---------------------------------------------------------------------
%---------------------------------------------------------------------
%
\subsection{An estimate for $S_{r}^{\mathcal{A}}$}
We shall rely on the following estimate (\cite[Theorem
1.3]{CaMaPr15}; see also \cite{GhLa02a}, \cite{CaMa07} or
\cite{MaPePr16} for similar estimates): if $W\subset \Pp^n$ is a
normal complete intersection defined over $\fq$ of dimension $l\geq
2$ and multidegree $(e_1,\ldots,e_{n-l})$, then
\begin{equation}\label{eq: estimate normal var CaMaPr}
\big||W(\fq)|-p_l\big| \leq
(\delta_W(D_W-2)+2)q^{l-\frac{1}{2}}+14D_W^2\delta_W^2 q^{l-1},
\end{equation}
where $p_l:=q^l+q^{l-1}+\cdots+1$, $\delta_W:=\prod_{i=1}^{n-l}e_i$
and $D_W:=\sum_{i=1}^{n-l}(e_i-1)$.

In what follows, we shall use the following notations:
$$\begin{array}{lll}
\displaystyle\delta_V:=\prod_{i=1}^md_i,&\,\,\displaystyle
\delta_{\Delta_r}:=\prod_{i=1}^r(d-i+1)=\frac{d!}{(d-r)!},&\displaystyle
\,\,\delta_r:=\delta_V\delta_{\Delta_r},\\
\displaystyle D_V:=\sum_{i=1}^m(d_i-1),& \displaystyle
D_{\Delta_r}:=\sum_{i=1}^r(d-i)=rd-\frac{r(r+1)}{2},&
D_r:=D_V+D_{\Delta_r}.
\end{array}
$$
We start with an estimate on the number of elements of
$\mathcal{A}$.
\begin{lemma}\label{lemma: lower bound |A|}
For $q>16(D_V\delta_V+14D_V^2\delta_V^2 q^{-\frac{1}{2}})^2$, we
have
$$\frac{1}{2}q^{d-m-1}<|\mathcal{A}|\le q^{d-m-1}+
2\big(\delta_V(D_V-2)+2+14D_V^2\delta_V^2q^{-\frac{1}{2}}\big)q^{d-m-\frac{3}{2}}.$$
\end{lemma}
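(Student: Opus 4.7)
The plan is to identify $|\mathcal{A}|$ with $|V(\fq)|$, where $V\subset\A^{d-1}$ is the $\fq$--variety cut out by $G_1,\ldots,G_m$ in the coordinates $A_{d-1},\ldots,A_1$, and then to estimate $|V(\fq)|$ by passing to its projective closure and applying \eqref{eq: estimate normal var CaMaPr}. Each $f\in\mathcal{A}$ is uniquely specified by the tuple $(a_{d-1},\ldots,a_1)\in V(\fq)$, so $|\mathcal{A}|=|V(\fq)|$, and hypothesis $({\sf H_1})$ ensures that $V$ is an ideal--theoretic complete intersection of pure dimension $d-1-m$.

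The first substantive step is to verify that the projective closure $\mathrm{pcl}(V)\subset\Pp^{d-1}$ is a normal ideal--theoretic complete intersection of dimension $d-1-m$ and multidegree $(d_1,\ldots,d_m)$. This is a simpler analogue of the work in Section \ref{section: geometry of pcl Gamma r}: hypothesis $({\sf H_3})$ shows that $\mathrm{pcl}(V)\cap\{T_0=0\}$ coincides with the projective variety defined by $G_1^{d_1},\ldots,G_m^{d_m}$ in $\Pp^{d-2}$, which is normal of pure dimension $d-m-2$ and degree $\delta_V$; combined with the normality of $V$ supplied by $({\sf H_2})$ this forces $\mathrm{Sing}(\mathrm{pcl}(V))$ to have codimension at least $2$ in $\mathrm{pcl}(V)$, while B\'ezout gives $\deg\mathrm{pcl}(V)=\delta_V$.

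With this in hand I would apply \eqref{eq: estimate normal var CaMaPr} to the normal complete intersection $\mathrm{pcl}(V)$ (with $l=d-1-m$) and bound the points at infinity by \eqref{eq: upper bound -- projective gral}:
\begin{align*}
\bigl||\mathrm{pcl}(V)(\fq)|-p_{d-1-m}\bigr| &\le (\delta_V(D_V-2)+2)\,q^{d-m-3/2}+14\,D_V^2\delta_V^2\, q^{d-m-2},\\
|(\mathrm{pcl}(V)\cap\{T_0=0\})(\fq)| &\le \delta_V\, p_{d-m-2}.
\end{align*}
Using $|\mathcal{A}|=|V(\fq)|=|\mathrm{pcl}(V)(\fq)|-|(\mathrm{pcl}(V)\cap\{T_0=0\})(\fq)|$ together with the identity $p_{d-1-m}=q^{d-m-1}+p_{d-m-2}$, the claimed upper bound falls out after absorbing the $p_{d-m-2}$ and $\delta_V p_{d-m-2}$ contributions into the two error terms via the elementary estimate $p_j\le 2q^j$ valid for $q\ge 2$.

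For the lower bound, the same two inequalities give
$$|\mathcal{A}|\ge q^{d-m-1}+p_{d-m-2}-(\delta_V(D_V-2)+2)q^{d-m-3/2}-14D_V^2\delta_V^2\,q^{d-m-2}-\delta_V\, p_{d-m-2},$$
so to conclude $|\mathcal{A}|>\tfrac12 q^{d-m-1}$ it is enough, after dividing through by $q^{d-m-3/2}$, to check that the combined correction is bounded by $\tfrac12 q^{1/2}$; this is a direct consequence of the hypothesis $q>16(D_V\delta_V+14D_V^2\delta_V^2 q^{-1/2})^2$, which is tailored precisely to produce this inequality. The genuinely substantive step---and the main obstacle---is the geometric verification that $\mathrm{pcl}(V)$ is a normal complete intersection of the asserted multidegree; once that is granted, the rest is routine bookkeeping with \eqref{eq: estimate normal var CaMaPr} and \eqref{eq: upper bound -- projective gral}.
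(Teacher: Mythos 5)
Your strategy is close to the paper's, but it has a genuine gap in the upper bound, which comes from replacing the two-sided estimate at infinity by the crude one-sided bound \eqref{eq: upper bound -- projective gral}.

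The paper observes that not only $\mathrm{pcl}(V)$ but \emph{also} $\mathrm{pcl}(V)^{\infty}:=\mathrm{pcl}(V)\cap\{T_0=0\}$ is a normal complete intersection (of dimension $d-m-2$ in $\Pp^{d-2}$ and multidegree $(d_1,\ldots,d_m)$), by hypothesis $({\sf H_3})$. It then applies \eqref{eq: estimate normal var CaMaPr} to \emph{both} varieties, writing
$$
\big||\mathcal{A}|-q^{d-m-1}\big|\le \big||\mathrm{pcl}(V)(\fq)|-p_{d-m-1}\big|+\big||\mathrm{pcl}(V)^{\infty}(\fq)|-p_{d-m-2}\big|,
$$
so that the $p_{d-m-2}$ coming from $p_{d-m-1}=q^{d-m-1}+p_{d-m-2}$ is cancelled exactly against $|\mathrm{pcl}(V)^{\infty}(\fq)|$, and only small error terms remain. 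You instead bound $|\mathrm{pcl}(V)^{\infty}(\fq)|$ above by $\delta_V\,p_{d-m-2}$ and below (implicitly) by $0$. For the upper bound on $|\mathcal{A}|$ this leaves an uncancelled $p_{d-m-2}$ that you then try to ``absorb'' into the error terms using $p_j\le 2q^j$. But the target error term in the lemma is $2\big(\delta_V(D_V-2)+2+14D_V^2\delta_V^2 q^{-1/2}\big)q^{d-m-3/2}$, and this vanishes identically when $D_V=0$ (equivalently $\delta_V=1$, i.e.\ all $d_i=1$). In that case your argument yields only $|\mathcal{A}|\le p_{d-m-1}=q^{d-m-1}+p_{d-m-2}$, which is strictly weaker than the stated bound $|\mathcal{A}|\le q^{d-m-1}$; the extraneous $p_{d-m-2}$ cannot be absorbed. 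The case $D_V=0$ is not an artifact: it is precisely the linear families $\mathcal{A}_\mathcal{L}$ treated in Theorem~\ref{theorem: application linear families}, so the lemma must handle it.

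The fix is simply to use the full strength of $({\sf H_3})$ and apply \eqref{eq: estimate normal var CaMaPr} to $\mathrm{pcl}(V)^{\infty}$ as well, rather than the one-sided bound \eqref{eq: upper bound -- projective gral}. Your lower bound and the remaining bookkeeping are fine (and match the paper's once the two-sided estimate at infinity is in place). Also note that the lemma's bound is a genuine two-sided estimate $\big||\mathcal{A}|-q^{d-m-1}\big|\le \cdots$; phrasing your argument as two separate one-sided inequalities obscures the natural cancellation that makes the proof work.
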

\begin{proof}
Hypothesis $(\sf H_1)$, $(\sf H_2)$ and $(\sf H_3)$ imply that both
the projective closure $\mathrm{pcl}(V)\subset\Pp^{d-1}$ of $V$ and
the set $\mathrm{pcl}(V)^\infty:=\mathrm{pcl}(V)\cap\{T_0=0\}$ of
points at infinity are normal complete intersections defined over
$\fq$, both of multidegree $(d_1,\ldots, d_m)$ in $\Pp^{d-1}$ and
$\{T_0=0\}\cong\Pp^{d-2}$ respectively. Therefore, by \eqref{eq:
estimate normal var CaMaPr} it follows that
\begin{align*}
\big||\mathcal{A}|-q^{d-m-1}\big|=&
\big||\mathrm{pcl}(V)(\fq)|-|\mathrm{pcl}(V)^{\infty}(\fq)|-p_{d-m-1}+p_{d-m-2}\big|\nonumber\\
\leq & \big||\mathrm{pcl}(V)(\fq)|-p_{d-m-1}\big|+\big||\mathrm{pcl}(V(\fq))^{\infty}|-p_{d-m-2}\big| \nonumber\\
\le &
\left(\delta_V(D_V-2)+2\right)(q+1)q^{d-m-\frac{5}{2}}+14D_V^2\delta_V^2(q+1)q^{d-m-3}
\nonumber\\
\le &
2\big(\delta_V(D_V-2)+2+14D_V^2\delta_V^2q^{-\frac{1}{2}}\big)q^{d-m-\frac{3}{2}}.
\end{align*}
By the hypothesis on $q$ of the statement, the lemma readily
follows.
\end{proof}

By Theorem \ref{th: pcl Gamma_j is normal abs irred},
$\mathrm{pcl}(\Gamma_{r}^*)\subset\Pp^{d+r}$ is a normal complete
intersection defined over $\fq$ of dimension $d-m$ and multidegree
$(d_1,\ldots,d_m,d,\ldots,d-r+1)$. Therefore, applying (\ref{eq:
estimate normal var CaMaPr}) we obtain
$$
\big||\mathrm{pcl}(\Gamma_{r}^*)(\fq)|-p_{d-m}\big| \leq
(\delta_r(D_r-2)+2)q^{d-m-\frac{1}{2}}+14D_r^2\delta_r^2q^{d-m-1}.
$$
On the other hand, since
$\mathrm{pcl}(\Gamma_{r}^*)^{\infty}:=\mathrm{pcl}(\Gamma_{r}^*)\cap
\{T_0=0\}\subset\Pp^{d+r-1}$ is a finite union of at most $r+1$
varieties, each of pure dimension $d-m-1$ and degree $\delta_V$, by
\eqref{eq: upper bound -- projective gral} we have
$|\mathrm{pcl}(\Gamma_{r}^*)^{\infty}(\fq)|\le (r+1)\delta_V
p_{d-m-1}$. Hence,
\begin{align}\label{eq: estimate q-points Gamma_j estrella}
\big||\Gamma_{r}^*(\fq)|-&q^{d-m}\big|\leq
\big||\mathrm{pcl}(\Gamma_{r}^*)(\fq)|-p_{d-m}\big|+
\big||\mathrm{pcl}(\Gamma_{r}^*(\fq))^{\infty}|-p_{d-m-1}\big| \nonumber\\
\le &
\left(\delta_r(D_r-2)+2\right)q^{d-m-\frac{1}{2}}+14D_r^2\delta_r^2q^{d-m-1}+(r+1)\delta_V
p_{d-m-1}
\nonumber\\
\le &
\left(\delta_r(D_r-2)+2\right)q^{d-m-\frac{1}{2}}+(14D_r^2\delta_r^2+4r\delta_V)q^{d-m-1}.
\end{align}

We also need an upper bound on the number $\fq$--rational points of
$$
\Gamma_{r}^{*,=}:= \Gamma_{r}^* \bigcap \bigcup_{1 \leq i <j \leq r}
\{T_i =T_j\}.
$$
We observe that $\Gamma_{r}^{*,=}=\Gamma_{r}^{*}\cap \mathcal{H}_r$,
where $\mathcal{H}_r\subset \A^{d+r}$ is the hypersurface defined by
the polynomial $F_r:=\prod_{1 \leq i <j\leq r} (T_i-T_j)$. From the
B\'ezout inequality (\ref{eq: Bezout}) it follows that
\begin{equation}\label{eq: deg Gamma =}
\deg \Gamma_{r}^{*,=} \leq \delta_r \binom {r}{2}.
\end{equation}
Furthermore, we claim that $\Gamma_{r}^{*,=}$ has dimension at most
$d-m-1$. Indeed, let $(\boldsymbol{a}_0,\boldsymbol{\alpha})$ be any
point of $\Gamma_r^{*,=}$. Assume without loss of generality that
$\alpha_1=\alpha_2$. By the definition of divided differences we
deduce that $f_{\boldsymbol a_0}'(\alpha_1)=0$, which implies that
$f_{\boldsymbol a_0}$ has multiple roots. By the proof of Corollary
\ref{coro: J is radical}, the set of points $(\boldsymbol
a_0,\boldsymbol\alpha)$ of $\Gamma_r^*$ for which $f_{\boldsymbol
a_0}$ has multiple roots is contained in a subvariety of
$\Gamma_r^*$ of codimension 1, which proves the claim.

Combining the claim with (\ref{eq: deg Gamma =}) and \eqref{eq:
upper bound -- affine gral}, we obtain
\begin{equation}\label{eq: upper bound q-points gamma_j=}
\big|\Gamma_{r}^{*,=}(\fq)\big| \leq \delta_r\binom{r}{2} q^{d-m-1}.
\end{equation}
Since $\Gamma_{r}(\fq)=\Gamma_{r}^*(\fq)\setminus
\Gamma_{r}^{*,=}(\fq)$, from (\ref{eq: estimate q-points Gamma_j
estrella}) and (\ref{eq: upper bound q-points gamma_j=}) we deduce
that
\begin{align*}
\big||\Gamma_{r}(\fq)|&- q^{d-m}\big|\leq \big||\Gamma_{r}^*(\fq)|-
q^{d-m}\big|+
|\Gamma_{r}^{*,=}(\fq)|\\
\leq&\left(\delta_r(D_r-2)+2\right)q^{d-m-\frac{1}{2}}+\left(14D_r^2\delta_r^2+\binom{r}{2}\delta_r+4r
\delta_V\right)q^{d-m-1}.
\end{align*}
As a consequence, we obtain the following result.
\begin{theorem}\label{th: estimate S_j}
Let $q>d\ge m+2$. For any $r$ with and $1\leq r\leq d$, we have
$$
\bigg|S_{r}^{\mathcal{A}}- \frac{q^{d-m}}{r!}\bigg|\leq
\Bigg(\frac{\delta_r(D_r-2)+2}{r!}\,q^{\frac{1}{2}}+
\left(14\frac{D_r^2\delta_r^2}{r!}+
\binom{r}{2}\frac{\delta_r}{r!}+\frac{4r}{r!}\delta_V\right)\Bigg)q^{d-m-1}.$$
\end{theorem}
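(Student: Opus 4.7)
The plan is to extract the theorem from the apparatus already built. By Lemma 3.2, $S_r^{\mathcal{A}} = |\Gamma_r(\fq)|/r!$, so it suffices to establish the bound
$$\big||\Gamma_r(\fq)|-q^{d-m}\big|\le(\delta_r(D_r-2)+2)q^{d-m-\frac12}+\Bigl(14D_r^2\delta_r^2+\binom{r}{2}\delta_r+4r\delta_V\Bigr)q^{d-m-1}$$
and divide both sides by $r!$. Since $\Gamma_r = \Gamma_r^* \setminus \Gamma_r^{*,=}$ (where $\Gamma_r^{*,=}$ is the intersection of $\Gamma_r^*$ with the union of the diagonal hyperplanes $\{T_i = T_j\}$), the triangle inequality reduces the task to two separate estimates: one for $||\Gamma_r^*(\fq)| - q^{d-m}|$ and an upper bound for $|\Gamma_r^{*,=}(\fq)|$.

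For the first, I would pass to the projective closure $\mathrm{pcl}(\Gamma_r^*) \subset \Pp^{d+r}$. By Theorem 4.7, this is a normal complete intersection of dimension $d-m$ and multidegree $(d_1,\ldots,d_m,d,\ldots,d-r+1)$, with $\delta_W = \delta_r$ and $D_W = D_r$ in the notation of the estimate (5.1). Applying that estimate gives a bound on $||\mathrm{pcl}(\Gamma_r^*)(\fq)| - p_{d-m}|$. To pass back to the affine count I subtract the points at infinity: by Lemma 4.5, $\mathrm{pcl}(\Gamma_r^*) \cap \{T_0 = 0\}$ is contained in a union of $r+1$ complete intersections of pure dimension $d-m-1$ and degree $\delta_V$, so the projective bound (2.2) yields $|\mathrm{pcl}(\Gamma_r^*)^\infty(\fq)| \le (r+1)\delta_V p_{d-m-1}$. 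Combining with $|\mathrm{pcl}(\Gamma_r^*)(\fq)| - |\mathrm{pcl}(\Gamma_r^*)^\infty(\fq)| = |\Gamma_r^*(\fq)|$ and $p_{d-m} - p_{d-m-1} = q^{d-m}$, and absorbing the factor $(r+1)\delta_V p_{d-m-1} \le 4r\delta_V q^{d-m-1}$ (using $d \ge m+2$), I obtain the desired bound on $||\Gamma_r^*(\fq)| - q^{d-m}|$.

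For the second, $\Gamma_r^{*,=} = \Gamma_r^* \cap \{F_r = 0\}$ where $F_r = \prod_{1\le i<j\le r}(T_i - T_j)$ has degree $\binom{r}{2}$, so Bézout's inequality gives $\deg \Gamma_r^{*,=} \le \delta_r\binom{r}{2}$. The key observation is that any point of $\Gamma_r^{*,=}$ has some $\alpha_i = \alpha_j$, which by the recursive definition of divided differences forces $f_{\boldsymbol a_0}'(\alpha_i) = 0$ and hence $f_{\boldsymbol a_0}$ has a multiple root. By the argument already used in the proof of Corollary 4.3, the preimage $\Psi_r^{-1}(\mathcal{D}(V))$ of the discriminant locus has codimension at least $1$ in $\Gamma_r^*$ (using hypothesis $({\sf H_4})$), so $\Gamma_r^{*,=}$ has dimension at most $d-m-1$. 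The affine bound (2.1) then gives $|\Gamma_r^{*,=}(\fq)| \le \delta_r\binom{r}{2}q^{d-m-1}$.

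Adding the two estimates and dividing by $r!$ yields the theorem. There is no substantial obstacle: all nontrivial geometric work — the complete intersection property of $\Gamma_r^*$, the control of its singular locus and the normality of $\mathrm{pcl}(\Gamma_r^*)$ — has been done in Sections 4 and 5, and the proof here is pure assembly of the estimate (5.1) with the affine and projective point-counting bounds (2.1)--(2.2).
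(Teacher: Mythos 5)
Your proposal is correct and follows essentially the same route as the paper: pass to $\mathrm{pcl}(\Gamma_r^*)$, apply the estimate (5.1) via Theorem~\ref{th: pcl Gamma_j is normal abs irred}, control the points at infinity with Lemma~\ref{lemma: singular locus pcl Gamma_j at infinity} and the projective bound (2.2), then remove the diagonal $\Gamma_r^{*,=}$ using B\'ezout, the codimension-one argument from the proof of Corollary~\ref{coro: J is radical}, and the affine bound (2.1). The assembly and the final division by $r!$ match the paper's argument step for step.
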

%
%---------------------------------------------------------------------
%---------------------------------------------------------------------
%---------------------------------------------------------------------
%---------------------------------------------------------------------
%
\subsection{An estimate for the average value set $\mathcal{V}(\mathcal{A})$}
Theorem \ref{th: estimate S_j} is the critical step in our approach
to estimate $\mathcal{V}(\mathcal{A})$.
\begin{corollary}\label{coro: average value sets}
With assumptions as in Lemma \ref{lemma: lower bound |A|} and
Theorem \ref{th: estimate S_j},
\begin{equation}\label{eq: estimate V(A)}
\left|\mathcal{V}(\mathcal{A})-\mu_d q\right|\le
2^d\delta_V(3D_V+d^2) q^{1/2}+
\frac{7}{4}\delta_V^2D_V^2d^4\sum_{k=0}^{d-1}\binom{d}{k}^{2}(d-k)!.
\end{equation}
\end{corollary}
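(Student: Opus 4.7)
The strategy is to start from the combinatorial formula
$$\mathcal{V}(\mathcal{A}) = \frac{1}{|\mathcal{A}|}\sum_{r=1}^d (-1)^{r-1}\mathcal{S}_r^{\mathcal{A}}$$
of Lemma~\ref{lemma: reduction to interp sets} and compare it term by term with $\mu_d q = \sum_{r=1}^d (-1)^{r-1} q/r!$. Writing $q=q^{d-m}/q^{d-m-1}$ and adding/subtracting $\frac{1}{q^{d-m-1}}\sum_{r=1}^d (-1)^{r-1}\frac{q^{d-m}}{r!}$, I decompose
$$\mathcal{V}(\mathcal{A})-\mu_d q \;=\; \frac{1}{|\mathcal{A}|}\sum_{r=1}^d (-1)^{r-1}\Bigl(\mathcal{S}_r^{\mathcal{A}}-\tfrac{q^{d-m}}{r!}\Bigr) \;+\; \Bigl(\tfrac{1}{|\mathcal{A}|}-\tfrac{1}{q^{d-m-1}}\Bigr)\mu_d\, q^{d-m},$$
isolating the error from the approximation of each $\mathcal{S}_r^{\mathcal{A}}$ and the error from the approximation of $|\mathcal{A}|$.

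The first summand supplies the leading term. Applying Theorem~\ref{th: estimate S_j} term by term and using the lower bound $|\mathcal{A}|>q^{d-m-1}/2$ from Lemma~\ref{lemma: lower bound |A|}, the triangle inequality yields
$$\frac{1}{|\mathcal{A}|}\sum_{r=1}^d\bigl|\mathcal{S}_r^{\mathcal{A}}-\tfrac{q^{d-m}}{r!}\bigr| \;\le\; 2\sum_{r=1}^d \tfrac{\delta_r(D_r-2)+2}{r!}\,q^{1/2} \;+\; 2\sum_{r=1}^d \Bigl(14\tfrac{D_r^2\delta_r^2}{r!} + \tbinom{r}{2}\tfrac{\delta_r}{r!} + \tfrac{4r}{r!}\delta_V\Bigr).$$
To extract the $q^{1/2}$--coefficient I use $\delta_r/r!=\delta_V\binom{d}{r}$, $D_r\le D_V+rd$, $\sum_{r=1}^d\binom{d}{r}\le 2^d$ and $\sum_{r=1}^d r\binom{d}{r}=d\,2^{d-1}$, which combine to
$$2\sum_{r=1}^d \tfrac{\delta_r D_r}{r!} \;\le\; 2\delta_V\bigl(2^d D_V + d^2\,2^{d-1}\bigr) \;=\; 2^d\delta_V(2D_V+d^2) \;\le\; 2^d\delta_V(3D_V+d^2),$$
matching the stated leading term; the $+2/r!$ piece is absorbed by the slack $2D_V\to 3D_V$. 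For the constant coefficient the key identity is $\delta_r^2/r!=\delta_V^2\binom{d}{r}^2 r!$, and the substitution $k=d-r$ rewrites $\sum_{r=1}^d \delta_r^2/r!$ as $\delta_V^2\sum_{k=0}^{d-1}\binom{d}{k}^2(d-k)!$. A straightforward bound on $D_r^2$ together with this identity absorbs the dominant piece $28\sum_r D_r^2\delta_r^2/r!$ into a multiple of $\delta_V^2 D_V^2 d^4\sum_{k=0}^{d-1}\binom{d}{k}^2(d-k)!$, and the subleading summands $2\binom{r}{2}\delta_r/r!$ and $8r\delta_V/r!$ are similarly dominated.

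The second summand is of smaller order. By Lemma~\ref{lemma: lower bound |A|},
$$\Bigl|\tfrac{1}{|\mathcal{A}|}-\tfrac{1}{q^{d-m-1}}\Bigr| \;\le\; \tfrac{4\bigl(\delta_V(D_V-2)+2+14D_V^2\delta_V^2\,q^{-1/2}\bigr)}{q^{d-m+1/2}},$$
and combined with $|\mu_d q^{d-m}|\le q^{d-m}$ this yields an $O(q^{-1/2})$ contribution with an explicit constant, strictly dominated by the constant coefficient already produced in the first step.

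The main obstacle is the numerical bookkeeping: one must verify that every subleading piece---the $+2$, $\binom{r}{2}$ and $4r/r!$ terms of Theorem~\ref{th: estimate S_j}, plus the full $O(q^{-1/2})$ contribution from the $|\mathcal{A}|$ approximation---actually fits inside the single constant $\tfrac{7}{4}$ in front of $\delta_V^2 D_V^2 d^4\sum_{k=0}^{d-1}\binom{d}{k}^2(d-k)!$. Conceptually, however, the proof is nothing more than a careful triangle--inequality combination of the two key ingredients already established.
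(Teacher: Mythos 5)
Your decomposition is exactly the one the paper uses: split $\mathcal{V}(\mathcal{A})-\mu_d q$ into the term controlled by Theorem~\ref{th: estimate S_j} plus $\mu_d\bigl(q^{d-m}-|\mathcal{A}|q\bigr)/|\mathcal{A}|$, then feed in Lemma~\ref{lemma: lower bound |A|}. The identities $\delta_r/r!=\delta_V\binom{d}{r}$, $\delta_r^2/r!=\delta_V^2\binom{d}{r}^2 r!$, and the reindexing $k=d-r$ are also exactly what the paper does. So conceptually this is the same route.

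There is, however, a concrete arithmetic slip in your treatment of the second summand. From Lemma~\ref{lemma: lower bound |A|} one has $\bigl||\mathcal{A}|-q^{d-m-1}\bigr|\le 2\bigl(\delta_V(D_V-2)+2+14D_V^2\delta_V^2q^{-1/2}\bigr)q^{d-m-3/2}$ and $|\mathcal{A}|>q^{d-m-1}/2$, so
\[
\Bigl|\tfrac{1}{|\mathcal{A}|}-\tfrac{1}{q^{d-m-1}}\Bigr|
=\frac{\bigl||\mathcal{A}|-q^{d-m-1}\bigr|}{|\mathcal{A}|\,q^{d-m-1}}
\le\frac{4\bigl(\delta_V(D_V-2)+2+14D_V^2\delta_V^2q^{-1/2}\bigr)}{q^{\,d-m-1/2}},
\]
with $q^{d-m-1/2}$, not $q^{d-m+1/2}$, in the denominator. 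After multiplying by $|\mu_d|q^{d-m}\le q^{d-m}$, this second summand is therefore of order $q^{1/2}$ (plus a constant coming from the $14D_V^2\delta_V^2q^{-1/2}$ piece), not $q^{-1/2}$ as you claim. Consequently the passage from $2D_V$ to $3D_V$ in the final $q^{1/2}$ coefficient is needed precisely to absorb this $4\delta_V(D_V-2)q^{1/2}$ contribution (note $4D_V\le 2^d D_V$ for $d\ge 2$), not the $+2/r!$ term as you suggest; in fact the net effect of the ``$-2$'' in $\delta_r(D_r-2)+2$ is helpful, not harmful. Your overall conclusion survives because the required slack is genuinely present, but the accounting of where the slack is consumed is wrong, and a careful reader would be misled into thinking the $|\mathcal{A}|$--approximation error is negligible at the $q^{1/2}$ level when it is not.
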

\begin{proof}
According to Lemma \ref{lemma: reduction to interp sets}, we have
\begin{align}
\mathcal{V}(\mathcal{A})&-\mu_d\, q=
\frac{1}{|\mathcal{A}|}\sum_{r=1}^{d}
(-1)^{r-1}\left(S_{r}^{\mathcal{A}}
-\frac{|\mathcal{A}|q}{r!}\right)\nonumber\\
&= \frac{1}{|\mathcal{A}|}\sum_{r=1}^{d}
(-1)^{r-1}\left(S_{r}^{\mathcal{A}}-\frac{q^{d-m}}{r!}\right)
-\frac{1}{|\mathcal{A}|}\sum_{r=1}^{d}
(-1)^{r-1}\left(\frac{|\mathcal{A}|q-q^{d-m}}{r!}\right)\nonumber\\
&= \frac{1}{|\mathcal{A}|}\sum_{r=1}^{d}
(-1)^{r-1}\left(S_{r}^{\mathcal{A}}-\frac{q^{d-m}}{r!}\right)
+\mu_d\left(\frac{q^{d-m}-|\mathcal{A}|q}{|\mathcal{A}|}\right)\label{eq:
average value set th 2}.
\end{align}

We consider the absolute value of the first sum in the right--hand
side of (\ref{eq: average value set th 2}). From Lemma \ref{lemma:
lower bound |A|} and Theorem \ref{th: estimate S_j} we have
\begin{align*}
\frac{1}{|\mathcal{A}|}\sum_{r=1}^{d}
\bigg|&S_{r}^{\mathcal{A}}-\frac{q^{d-m}}{r!}\bigg| \le
\frac{2}{q^{d-m-1}}\sum_{r=1}^{d}
\left|S_{r}^{\mathcal{A}}-\frac{q^{d-m}}{r!}\right|\\
&\le 2q^{\frac{1}{2}}\sum_{r=1}^{d}\frac{\delta_r(D_r-2)+2}{r!}\,+
28\sum_{r=1}^{d}\frac{D_r^{2}\delta_r^{2}}{r!}+
2\delta_V\sum_{r=1}^{d}\frac{\binom{r}{2}\delta_{\Delta_r}+4r}{r!}.
\end{align*}
Concerning the first sum in the right--hand side, we see that
\begin{align*}
\sum_{r=1}^{d}\frac{\delta_r(D_r-2)+2}{r!}&\le\delta_V\Bigg(D_V
\sum_{r=1}^{d}\binom{d}{r}+\sum_{r=1}^{d}\frac{\delta_{\Delta_r}(D_{\Delta_r}-2)+2}{r!}\Bigg)\\
&\leq \delta_V\big(D_V2^d+d^22^{d-1}\big)=
2^{d-1}\delta_V(2D_V+d^2).
\end{align*}
On the other hand,
\begin{align*}
\sum_{r=1}^{d}\frac{D_r^2\delta_r^{2}}{r!}\!&=
\delta_V^2\Bigg(D_V^2\sum_{r=1}^{d}\frac{\delta_{\Delta_r}^2}{r!}
+2D_V\sum_{r=1}^{d}\frac{D_{\Delta_r}\delta_{\Delta_r}^2}{r!}+\sum_{r=1}^{d}
\frac{D_{\Delta_r}^2\delta_{\Delta_r}^2}{r!}\Bigg)\\
&\le \delta_V^2\bigg(\frac{D_V^2}{4} +D_V+1\bigg)\sum_{r=1}^{d}
\frac{D_{\Delta_r}^2\delta_{\Delta_r}^2}{r!}\\ &
\leq\delta_V^2\frac{(D_V^2+2)^2}{4}
\frac{1}{64}(2d-1)^{4}\sum_{k=0}^{d-1}\binom{d}{k}^{2}(d-k)!.
\end{align*}
Finally, we consider the last sum
$$
\sum_{r=1}^{d}\frac{\delta_{\Delta_r}}{2(r-2)!}=
\sum_{r=1}^d\binom{d}{r}\frac{r(r-1)}{2}=\sum_{k=0}^{d-1}\binom{d}{k}\frac{(d-k)!}{2\,(d-k-2)!}.
$$
As a consequence, we obtain
\begin{align*}
\frac{1}{|\mathcal{A}|}\sum_{r=1}^{d}
\left|S_{r}^{\mathcal{A}}-\frac{q^{d-m}}{r!}\right|\!\le
&q^{\frac{1}{2}}2^d\delta_V(2D_V+d^2)+
\frac{7}{64}(2d-1)^4\sum_{k=0}^{d-1}\binom{d}{k}^{2}(d-k)!\\&+
\sum_{k=0}^{d-1}\binom{d}{k}(d-k)!+8\sum_{k=0}^{d-1}\frac{1}{(d-k-1)!}.
\end{align*}

Concerning the second sum in the right--hand side of \eqref{eq:
average value set th 2}, by Lemma \ref{lemma: lower bound |A|} it
follows that
$$\left|\frac{q^{d-m}-|\mathcal{A}|q}{|\mathcal{A}|}\right|\le
4\big(\delta_V(D_V-2)+2+14D_V^2\delta_V^2q^{-\frac{1}{2}}\big)q^{\frac{1}{2}}.$$
The statement of the corollary follows by elementary calculations.
\end{proof}

Next we analyze the behavior of the right--hand side of (\ref{eq:
estimate V(A)}). This analysis consists of elementary calculations,
which are only sketched.

Fix $k$ with $0\le k\le d-1$ and denote
$h(k):=\binom{d}{k}^2(d-k)!$. From an analysis of the sign of the
differences $h(k+1)-h(k)$ for $0\le k\le d-1$ we deduce the
following remark, which is stated without proof.
\begin{remark}\label{rem: growth h(k)}
Let $k_0:=-1/2+\sqrt{5+4d}/2$. Then $h$ is either an increasing
function or a unimodal function in the integer interval $[0,d-1]$,
which reaches its maximum at $\lfloor k_0\rfloor$.
\end{remark}

From Remark \ref{rem: growth h(k)} we see that
\begin{equation}\label{eq: expresion a analizar}
\sum_{k=0}^{d-1}\binom{d}{k}^{2}(d-k)!\le d \binom{d}{\lfloor
k_0\rfloor}^2(d-\lfloor k_0\rfloor)!=\frac{d\,(d!)^2}{(d-\lfloor
k_0\rfloor)!\,(\lfloor k_0\rfloor!)^2 }.
\end{equation}
Now we use the following version of the Stirling formula (see, e.g.,
\cite[p. 747]{FlSe09}): for $m\in \N$, there exists $\theta$ with
$0\le \theta<1$ such that
$$m!=(m/e)^m\sqrt{2\pi m}\,e^{\theta/12m}.$$
By the Stirling formula there exist $\theta_i$ ($i=1,2,3$) with
$0\le \theta_i<1$ such that
$$C(d)\!:=\frac{d\,(d!)^2}{(d-\lfloor
k_0\rfloor)!\,(\lfloor k_0\rfloor!)^2 }\le \frac{d\,
d^{2d+1}e^{-d+\lfloor
k_0\rfloor}e^{\frac{\theta_1}{6d}-\frac{\theta_2}{12(d-\lfloor
k_0\rfloor)}-\frac{\theta_3}{6\lfloor k_0\rfloor}}}{\big(d-\lfloor
k_0\rfloor\big)^{d-\lfloor k_0\rfloor}\sqrt{2\pi(d-\lfloor
k_0\rfloor)}\lfloor k_0\rfloor^{2\lfloor k_0\rfloor+1}}.$$
By elementary calculations we obtain
\begin{align*}
% \nonumber to remove numbering (before each equation)
 (d-\lfloor k_0\rfloor)^{-d+\lfloor k_0\rfloor}&\le
 d^{-d+\lfloor k_0\rfloor}e^{\frac{\lfloor k_0\rfloor(d-\lfloor
 k_0\rfloor)}{d}},\quad
 \frac{ d^{\lfloor k_0\rfloor}}{{\lfloor k_0\rfloor}^{2\lfloor k_0\rfloor}} \le
 e^{\frac{d-\lfloor k_0\rfloor^2}{\lfloor k_0\rfloor}}.
\end{align*}
It follows that
$$C(d)\le \frac{d^{d+2}e^{2 \lfloor
k_0\rfloor}e^{-\frac{\lfloor k_0\rfloor^2}{d} +
\frac{1}{6d}+\frac{d-\lfloor k_0 \rfloor ^2}{\lfloor
k_0\rfloor}}}{\sqrt{2\pi} e^d \sqrt{d-\lfloor k_0\rfloor}\lfloor
k_0\rfloor}.
$$

By the definition of $\lfloor k_0\rfloor$, it is easy to see that
${d}/{\lfloor k_0\rfloor\sqrt{d-\lfloor k_0\rfloor}}\le {5}/{2}$ and
that $2\lfloor{k_0}\rfloor \le -1+\sqrt{5+4d}\le -1/5+2\sqrt{d}$.
Therefore, taking into account that $d\ge 2$, we conclude that
$$
C(d)\le \frac{5}{2}\,\frac{e^{\frac{109}{30}} d^{d+1}e^{2\sqrt{d}}}{
\sqrt{2\pi }e^d}.
$$
Combining this bound with Corollary \ref{coro: average value sets}
we obtain the following result.
\begin{theorem}\label{theorem: final main result - mean}
For $q>\max\big\{d,16(D_V\delta_V+14D_V^2\delta_V^2
q^{-\frac{1}{2}})^2\}$ and $d\ge m+2$, the following estimate holds:
$$%\begin{equation}\label{eq: upper bound error term}
\left|\mathcal{V}(\mathcal{A})-\mu_d\,q\right|\le
2^d\delta_V(3D_V+d^2) q^{\frac{1}{2}} +
67 \delta_V^2(D_V+2)^2\,{d^{d+5} e^{2 \sqrt{d}-d}}. $$%\end{equation}
\end{theorem}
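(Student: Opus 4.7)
The plan is to take Corollary \ref{coro: average value sets} as the starting point, which already establishes
$$\left|\mathcal{V}(\mathcal{A})-\mu_d q\right|\le 2^d\delta_V(3D_V+d^2)\,q^{1/2}+ \frac{7}{4}\delta_V^2D_V^2 d^4\sum_{k=0}^{d-1}\binom{d}{k}^{2}(d-k)!.$$
The first summand already matches the $q^{1/2}$ term of the theorem verbatim, so the entire task reduces to bounding the combinatorial sum $\Sigma(d):=\sum_{k=0}^{d-1}\binom{d}{k}^{2}(d-k)!$ by something of the form (constant)$\cdot d^{d+1} e^{2\sqrt{d}-d}$.

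To do this I would first invoke Remark \ref{rem: growth h(k)}: the function $h(k):=\binom{d}{k}^2(d-k)!$ is either increasing or unimodal on the integer interval $[0,d-1]$, with maximum at $\lfloor k_0\rfloor$, where $k_0:=-1/2+\sqrt{5+4d}/2$. Hence $\Sigma(d)\le d\cdot h(\lfloor k_0\rfloor)=C(d)$ with the expression $C(d)$ already identified in the preamble to the theorem. This step replaces a $d$-term sum with a single explicit dominant term, at the cost of a harmless factor $d$.

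The heart of the proof is the asymptotic estimate of $C(d)$ by Stirling's formula in the form $m!=(m/e)^m\sqrt{2\pi m}\,e^{\theta/(12m)}$ with $0\le\theta<1$. Substituting for $d!$, $(d-\lfloor k_0\rfloor)!$ and $\lfloor k_0\rfloor!$ makes the Stirling remainders collapse into a bounded exponential factor, while the main term is handled by the two elementary inequalities $(d-\lfloor k_0\rfloor)^{-(d-\lfloor k_0\rfloor)}\le d^{-(d-\lfloor k_0\rfloor)}e^{\lfloor k_0\rfloor(d-\lfloor k_0\rfloor)/d}$ and $d^{\lfloor k_0\rfloor}/\lfloor k_0\rfloor^{2\lfloor k_0\rfloor}\le e^{(d-\lfloor k_0\rfloor^2)/\lfloor k_0\rfloor}$. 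The defining relation $\lfloor k_0\rfloor(\lfloor k_0\rfloor+1)\approx d$ makes the two correction exponents cancel out up to $2\sqrt{d}-d$. Using $d/(\lfloor k_0\rfloor\sqrt{d-\lfloor k_0\rfloor})\le 5/2$ and $2\lfloor k_0\rfloor\le -1/5+2\sqrt{d}$ (valid for $d\ge 2$) yields the clean bound $C(d)\le \tfrac{5}{2}e^{109/30}d^{d+1}e^{2\sqrt{d}}/(\sqrt{2\pi}\,e^d)$.

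Finally, substituting $\Sigma(d)\le C(d)$ into the corollary gives a second error term of the form $\tfrac{7}{4}\delta_V^2 D_V^2 d^4\cdot C(d)$; bounding $D_V^2\le (D_V+2)^2$ and gathering the numerical constant $\tfrac{7}{4}\cdot\tfrac{5}{2}\cdot e^{109/30}/\sqrt{2\pi}$ shows that the overall constant is below $67$, producing the stated $67\delta_V^2(D_V+2)^2 d^{d+5} e^{2\sqrt{d}-d}$. The main obstacle is purely computational: tracking numerical constants carefully through the Stirling estimation so that the final universal constant actually fits inside $67$, and verifying that the elementary bounds on $\lfloor k_0\rfloor$ used to eliminate the floor function remain valid in the full range $d\ge m+2\ge 3$.
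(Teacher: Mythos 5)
Your proposal follows exactly the paper's own route: Corollary \ref{coro: average value sets} is the starting point, Remark \ref{rem: growth h(k)} replaces the sum $\sum_{k=0}^{d-1}\binom{d}{k}^2(d-k)!$ by $d$ times its largest term $C(d)$, Stirling's formula together with the two elementary inequalities on $\lfloor k_0\rfloor$ gives $C(d)\le \tfrac{5}{2}e^{109/30}d^{d+1}e^{2\sqrt{d}}/(\sqrt{2\pi}\,e^d)$, and then $D_V^2\le(D_V+2)^2$ and the numeric check $\tfrac{7}{4}\cdot\tfrac{5}{2}\cdot e^{109/30}/\sqrt{2\pi}\approx 66<67$ close the argument. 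This is the paper's proof, correctly reproduced.
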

%
%---------------------------------------------------------------------
%---------------------------------------------------------------------
%---------------------------------------------------------------------
%---------------------------------------------------------------------
%
\subsection{Applications of our main result}
We discuss two families of examples where hypotheses ${\sf (H_1)}$,
${\sf (H_2)}$, ${\sf (H_3)}$ and ${\sf (H_4)}$ hold. Therefore, the
estimate of Theorem \ref{theorem: final main result - mean} is valid
for these families.

Our first example concerns linear families of polynomials. Let
$L_i\in\fq[A_{d-1},\ldots,A_2]$ be polynomials of degree
1 $(1\le i\le m)$. Assume without loss of generality that the Jacobian matrix of
$L_1,\ldots,L_m$ with respect to $A_{d-1},\ldots,A_2$ is of full
rank $m\le d-2$. Consider the family $\mathcal{A}_\mathcal{L}$
defined as
$$
\mathcal{A}_\mathcal{L}\!:=\!\big\{T^d+a_{d-1}T^{d-1}+\cdots+a_0\!\in
\fq[T]: L_i(a_{d-1},\ldots,a_2)=0\, (1\le i\le m)\big\}.
$$

It is clear that hypotheses ${\sf (H_1)}$, ${\sf (H_2)}$ and ${\sf
(H_3)}$ hold. Now we analyze the validity of ${\sf (H_4)}$. Denote
by $\mathcal{L}\subset\A^d$ the linear variety defined by
$L_1,\ldots,L_m$, and let $\mathcal{D}(\mathcal{L})\subset\A^d$ and
$\mathcal{S}_1(\mathcal{L})\subset\A^d$ be the discriminant locus
and the first subdiscriminant locus of $\mathcal{L}$. Since the
coordinate ring $\cfq[\mathcal{L}]$ is a domain, hypothesis ${\sf
(H_4)}$ holds if the coordinate function defined by the discriminant
$\mathrm{Disc}(F(\boldsymbol A_0,T))$ in $\cfq[\mathcal{L}]$ is
nonzero, and the class of the subdiscriminant
$\mathrm{Subdisc}(F(\boldsymbol A_0,T))$ in the quotient ring
$\cfq[\mathcal{L}]/\mathrm{Disc}(F(\boldsymbol A_0,T))$ is not a
zero divisor. For fields $\fq$ of characteristic $p$ not dividing
$d(d-1)$, both assertions are consequences of \cite[Theorem
A.3]{MaPePr14}. Taking into account that $\delta_\mathcal{L}=1$ and
$D_\mathcal{L}=0$, applying Theorem \ref{theorem: final main result
- mean} we obtain the following result.
\begin{theorem}\label{theorem: application linear families}
For $p:=\mathrm{char}(\fq)$ not dividing $d(d-1)$ and $q>d\ge m+2$,
$$
\left|\mathcal{V}(\mathcal{A}_\mathcal{L})-\mu_d\,q\right|\le 2^dd^2
q^{\frac{1}{2}} + 268{d^{d+5} e^{2 \sqrt{d}-d}}. $$
\end{theorem}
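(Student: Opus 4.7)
The plan is to verify that the four hypotheses $({\sf H_1})$--$({\sf H_4})$ hold for the linear family $\mathcal{A}_\mathcal{L}$, and then invoke Theorem \ref{theorem: final main result - mean} with the invariants $\delta_V=1$ and $D_V=0$. The specialization of the general bound is a short arithmetic step: since each $d_i=1$, one has $\delta_V=\prod d_i=1$ and $D_V=\sum(d_i-1)=0$, so the first term becomes $2^d\cdot 1\cdot(3\cdot 0+d^2)q^{1/2}=2^dd^2q^{1/2}$ and the second becomes $67\cdot 1\cdot 2^2\cdot d^{d+5}e^{2\sqrt d-d}=268\,d^{d+5}e^{2\sqrt d-d}$, which is precisely the claimed estimate.

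Hypotheses $({\sf H_1})$, $({\sf H_2})$, $({\sf H_3})$ are essentially formal for linear families. For $({\sf H_1})$, the assumption that the Jacobian of $L_1,\ldots,L_m$ with respect to $A_{d-1},\ldots,A_2$ has full rank $m$ implies that $L_1,\ldots,L_m$ are linearly independent modulo constants, hence form a regular sequence; the ideal they generate is prime (the quotient is a polynomial ring), so certainly radical. For $({\sf H_2})$, $\mathcal{L}$ is a linear affine subspace, hence smooth, so a fortiori normal. For $({\sf H_3})$, the top-degree homogeneous components of $L_1,\ldots,L_m$ are precisely their linear parts, and these inherit the full-rank Jacobian property, so the same argument shows they satisfy $({\sf H_1})$ and $({\sf H_2})$.

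The main substantive step is verifying $({\sf H_4})$: one must show that the discriminant locus $\mathcal{D}(\mathcal{L})$ cuts out a codimension-one subvariety of $\mathcal{L}$, and that $\mathcal{D}(\mathcal{L})\cap\mathcal{S}_1(\mathcal{L})$ has codimension two in $\mathcal{L}$. Since $\cfq[\mathcal{L}]$ is an integral domain, the first statement is equivalent to saying that the class of $\mathrm{Disc}(F(\boldsymbol A_0,T))$ in $\cfq[\mathcal{L}]$ is nonzero, and the second (given the first) reduces to showing that the class of $\mathrm{Subdisc}(F(\boldsymbol A_0,T))$ in the quotient $\cfq[\mathcal{L}]/(\mathrm{Disc}(F(\boldsymbol A_0,T)))$ is not a zero divisor. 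Both of these algebraic assertions are exactly the content of \cite[Theorem A.3]{MaPePr14}, valid under the hypothesis $p\nmid d(d-1)$, which is imposed in the statement.

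The expected obstacle is entirely concentrated in $({\sf H_4})$: one cannot hope to verify it by a direct dimension count on $\mathcal{L}$ alone, because the discriminant and subdiscriminant are polynomials of large degree in the coefficients and their behavior on a low-codimension linear slice is not automatic. The characteristic hypothesis $p\nmid d(d-1)$ is needed precisely to exclude degenerate situations (such as $F(\boldsymbol A_0,T)$ being a $p$-th power along $\mathcal{L}$) in which the discriminant could vanish identically on $\mathcal{L}$, or the subdiscriminant could be annihilated modulo the discriminant. With $({\sf H_1})$--$({\sf H_4})$ in hand, Theorem \ref{theorem: final main result - mean} applies directly and, after the substitution $\delta_V=1$, $D_V=0$ carried out above, yields the stated bound.
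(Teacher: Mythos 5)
Your proposal is correct and follows essentially the same route as the paper: verify $({\sf H_1})$--$({\sf H_3})$ directly from the linearity and full-rank Jacobian assumption, reduce $({\sf H_4})$ to nonvanishing of the discriminant class in $\cfq[\mathcal{L}]$ and non-zero-divisor property of the subdiscriminant class in the quotient, cite \cite[Theorem A.3]{MaPePr14} under $p\nmid d(d-1)$, then specialize Theorem \ref{theorem: final main result - mean} with $\delta_V=1$, $D_V=0$ (which also collapses the lower bound on $q$ to $q>d$). Your elaboration of the $({\sf H_1})$--$({\sf H_3})$ checks is slightly more explicit than the paper's ``it is clear,'' but the argument is the same.
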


Our second example consists of a nonlinear family of polynomials.
Let $s$, $m$ be positive integers with $m\le s\le d-m-4$, let
$\Pi_1,\ldots,\Pi_s$ be the first $s$ elementary symmetric
polynomials of $\fq[A_{d-1},\ldots,A_2]$ and let
$G_1,\ldots,G_m\in\fq[A_{d-1},\ldots,A_2]$ be symmetric polynomials
of the form $G_i:=S_i(\Pi_1,\ldots,\Pi_s)$ $(1\le i\le m)$. Consider
the weight function ${\sf wt}:\fq[Y_1,\ldots,Y_s]\to\N$ defined by
setting ${\sf wt}(Y_i):=i$ $(1\le i\le s)$ and denote by $S_1^{\sf
wt},\ldots,S_m^{\sf wt}$ the components of highest weight of
$S_1,\ldots,S_m$. Assume that both $S_1,\ldots,S_m$ and $S_1^{\sf
wt},\ldots,S_m^{\sf wt}$ form regular sequences of
$\fq[Y_1,\ldots,Y_s]$, and the Jacobian matrices of $S_1,\ldots,S_m$
and $S_1^{\sf wt},\ldots,S_m^{\sf wt}$ with respect to
$Y_1,\ldots,Y_s$ have full rank in $\A^s$. We remark that varieties
defined by polynomials of this type arise in several combinatorial
problems over finite fields (see, e.g., \cite{CaMaPr12},
\cite{CeMaPePr14}, \cite{MaPePr14}, \cite{CeMaPe15} and
\cite{MaPePr15}). Finally, let
%$\mathcal{A}_\mathcal{N}$ be the
%
$$
\mathcal{A}_\mathcal{N}\!:=\!\big\{T^d+a_{d-1}T^{d-1}+\cdots+a_0\!\in
\fq[T]: G_i(a_{d-1},\ldots,a_2)=0\, (1\le i\le m)\big\}.
$$

Hypotheses ${\sf (H_1)}$, ${\sf (H_2)}$ and ${\sf (H_3)}$ hold due
to general facts of varieties defined by symmetric polynomials (see
\cite{MaPePr15} for details). Further, it can be shown that ${\sf
(H_4)}$ holds by a generalization of the arguments proving the
validity of ${\sf (H_4)}$ for the linear family
$\mathcal{A}_\mathcal{L}$. As a consequence, applying Theorem
\ref{theorem: final main result - mean} we deduce the following
result.
\begin{theorem}\label{theorem: application nonlinear families}
For $p:=\mathrm{char}(\fq)$ not dividing $d(d-1)$, $m\le s\le d-m-4$
and $q>\max\big\{d,16(D_\mathcal{N}\delta_\mathcal{N}
+14D_\mathcal{N}^2\delta_\mathcal{N}^2 q^{-\frac{1}{2}})^2\big\}$,
where $\delta_\mathcal{N}:=\prod_{i=1}^md_i$ and
$D_\mathcal{N}:=\sum_{i=1}^m(d_i-1)$, the following estimate holds:
$$
\left|\mathcal{V}(\mathcal{A}_\mathcal{N})-\mu_d\,q\right|\le
2^d\delta_\mathcal{N}(3D_\mathcal{N}+d^2) q^{\frac{1}{2}} + 67
\delta_\mathcal{N}^2(D_\mathcal{N}+2)^2\,{d^{d+5} e^{2 \sqrt{d}-d}}.
$$
\end{theorem}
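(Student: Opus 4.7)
The plan is to verify hypotheses ${\sf (H_1)}$--${\sf (H_4)}$ for $\mathcal{A}_\mathcal{N}$ and then invoke Theorem \ref{theorem: final main result - mean} with $\delta_V = \delta_\mathcal{N}$ and $D_V = D_\mathcal{N}$, which yields the stated bound directly with no further work.

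For ${\sf (H_1)}$--${\sf (H_3)}$ I would exploit the specific structure of the $G_i=S_i(\Pi_1,\ldots,\Pi_s)$. Because $\Pi_k$ is homogeneous of degree $k$ and the weight function assigns ${\sf wt}(Y_k)=k$, the homogeneous component of highest degree of $G_i$ equals $S_i^{\sf wt}(\Pi_1,\ldots,\Pi_s)$. The hypothesis that $S_1,\ldots,S_m$ and $S_1^{\sf wt},\ldots,S_m^{\sf wt}$ each form a regular sequence in $\fq[Y_1,\ldots,Y_s]$ with full-rank Jacobian on $\A^s$, combined with the general theory of varieties defined by symmetric-polynomial compositions developed in \cite{MaPePr15}, transfers these properties to $G_1,\ldots,G_m$ and to $G_1^{d_1},\ldots,G_m^{d_m}$: both tuples form regular sequences, each generates a radical ideal, and each defines a normal complete intersection. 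This secures ${\sf (H_1)}$, ${\sf (H_2)}$ for $\mathcal{A}_\mathcal{N}$ and, applied to the top-degree parts, ${\sf (H_3)}$.

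The main obstacle is ${\sf (H_4)}$: showing that $V\cap\mathcal{D}(V)$ has codimension one in $V$ and that $V\cap\mathcal{D}(V)\cap\mathcal{S}_1(V)$ has codimension two. Here I would argue as in the linear case $\mathcal{A}_\mathcal{L}$. Since ${\sf (H_1)}$ and ${\sf (H_2)}$ together with Theorem \ref{theorem: normal complete int implies irred} imply that $V$ is absolutely irreducible, the coordinate ring $\cfq[V]$ is a domain, and it suffices to prove (i) that the class of $\mathrm{Disc}(F(\boldsymbol A_0,T))$ in $\cfq[V]$ is nonzero and (ii) that the class of $\mathrm{Subdisc}(F(\boldsymbol A_0,T))$ in $\cfq[V]/\bigl(\mathrm{Disc}(F(\boldsymbol A_0,T))\bigr)$ is not a zero divisor. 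For (i) one exhibits a point of $V$ whose associated polynomial $f_{\boldsymbol a_0}$ has $d$ pairwise distinct roots in $\cfq$; for (ii) one shows that on each irreducible component of $V\cap\mathcal{D}(V)$ some polynomial has exactly one double root and $d-2$ simple roots, so its subdiscriminant is nonzero. The existence of such explicit specializations uses the characteristic hypothesis $p\nmid d(d-1)$ and requires a generalization of the argument of \cite[Theorem A.3]{MaPePr14} to the nonlinear symmetric setting; the dimension range $m\le s\le d-m-4$ provides the room needed to produce the required degenerations inside the symmetric stratum cut out by $G_1,\ldots,G_m$.

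Once ${\sf (H_1)}$--${\sf (H_4)}$ are in place, the conditions $q>\max\{d,16(D_\mathcal{N}\delta_\mathcal{N}+14D_\mathcal{N}^2\delta_\mathcal{N}^2q^{-1/2})^2\}$ and $d\ge m+2$ in the statement are precisely those required by Theorem \ref{theorem: final main result - mean}, and substituting $\delta_V=\delta_\mathcal{N}$, $D_V=D_\mathcal{N}$ in the conclusion of that theorem produces the bound claimed for $|\mathcal{V}(\mathcal{A}_\mathcal{N})-\mu_d q|$.
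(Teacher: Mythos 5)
Your proposal follows essentially the same route as the paper: verify $({\sf H_1})$--$({\sf H_3})$ from the structure of the $G_i=S_i(\Pi_1,\ldots,\Pi_s)$ via the theory of varieties defined by symmetric polynomials in \cite{MaPePr15}, check $({\sf H_4})$ by adapting the discriminant/subdiscriminant argument used for the linear family $\mathcal{A}_\mathcal{L}$ (which rests on $p\nmid d(d-1)$ and a generalization of \cite[Theorem A.3]{MaPePr14}), and then apply Theorem \ref{theorem: final main result - mean} with $\delta_V=\delta_\mathcal{N}$, $D_V=D_\mathcal{N}$. You actually supply a bit more detail than the paper does (e.g.\ the observation that the top-degree part of $G_i$ is $S_i^{\sf wt}(\Pi_1,\ldots,\Pi_s)$, and the reduction of $({\sf H_4})$ to (i) and (ii) via absolute irreducibility of $V$), but the argument is the same.
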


%
%
%---------------------------------------------------------------------
%---------------------------------------------------------------------
%---------------------------------------------------------------------
%---------------------------------------------------------------------
%---------------------------------------------------------------------
%---------------------------------------------------------------------
%---------------------------------------------------------------------
%---------------------------------------------------------------------
%
\bibliographystyle{amsalphaInitialsNoOx}
\bibliography{refs1,finite_fields}

\end{document}